\definecolor{dark-gray}{gray}{0.3}
\definecolor{dkgray}{rgb}{.4,.4,.4}
\definecolor{dkblue}{rgb}{0,0,.5}
\definecolor{medblue}{rgb}{0,0,.75}
\definecolor{rust}{rgb}{0.5,0.1,0.1}
\newtheorem{theorem}{Theorem}[section]
\newtheorem{lemma}[theorem]{Lemma}
\newtheorem{proposition}[theorem]{Proposition}
\newtheorem{fact}[theorem]{Fact}
\newtheorem{corollary}[theorem]{Corollary}
\theoremstyle{definition}
\newtheorem{definition}[theorem]{Definition}
\newtheorem{remark}[theorem]{Remark}
\newcommand{\term}{\emph}
\numberwithin{equation}{section} 
\numberwithin{figure}{section}
\numberwithin{table}{section}
\numberwithin{recipe}{section}
\providecommand{\mathbold}[1]{\bm{#1}}                                                                  
\newcommand{\qtq}[1]{\quad\text{#1}\quad}
\renewcommand{\phi}{\varphi}
\newcommand{\eps}{\varepsilon}
\newcommand{\half}{\tfrac{1}{2}}
\newcommand{\econst}{\mathrm{e}}
\newcommand{\coll}[1]{\mathscr{#1}}
\newcommand{\sphere}[1]{\mathsf{S}^{#1}}
\providecommand{\mathbbm}{\mathbb} 
\newcommand{\R}{\mathbbm{R}}
\newcommand{\polar}{\circ}
\newcommand{\abs}[1]{\left\vert {#1} \right\vert}
\newcommand{\diff}[1]{\mathrm{d}{#1}}
\newcommand{\idiff}[1]{\, \diff{#1}}
\newcommand{\grad}{\nabla} 
\newcommand{\argmin}{\operatorname*{arg\; min}}
\newcommand{\Prob}{\mathbbm{P}}
\newcommand{\Expect}{\operatorname{\mathbb{E}}}
\DeclareMathOperator{\Var}{Var}
\newcommand{\vct}[1]{\mathbold{#1}}
\newcommand{\mtx}[1]{\mathbold{#1}}
\newcommand{\transp}{T}
\newcommand{\trace}{\operatorname{tr}}
\newcommand{\Proj}{\ensuremath{\mtx{\Pi}}}
\newcommand{\ip}[2]{\left\langle {#1},\ {#2} \right\rangle}
\newcommand{\norm}[1]{\left\Vert {#1} \right\Vert}
\DeclareMathOperator{\dist}{dist}
\newcommand{\enorm}[1]{\norm{#1}}
\newcommand{\enormsm}[1]{\enorm{\smash{#1}}}
\newcommand{\enormsq}[1]{\enorm{#1}^2}
\newcommand{\fnorm}[1]{\norm{#1}_{\mathrm{F}}}
 \newcommand{\sdim}{\delta}
\DeclareMathOperator{\Circ}{Circ}
\newcommand{\cC}{\mathcal{C}} 
\newcommand{\sang}{\angle}
\newcommand{\relint}{\operatorname{relint}}
\newcommand{\cone}{\operatorname{cone}}
\newcommand{\lin}{\operatorname{lin}}
\DeclareMathOperator{\aTube}{\mathcal{T}_{s}}
\DeclareMathOperator{\distH}{\dist_{\mathcal{H}}}
\newcommand{\pInd}{\mathbb{1}}
\renewcommand{\zeta}{\eta}
\title[Steiner Formulas for Cones and Intrinsic Volumes]{From Steiner Formulas for Cones \\
to Concentration of Intrinsic Volumes}
\author[M.~B.~McCoy and J.~A.~Tropp]{Michael B.~McCoy and Joel~A.~Tropp}
\date{23 August 2013.  Revised 20 March 2014, 29 April 2014, and 12 July 2015.}
\subjclass[2010]{Primary: 52A22, 60D05. Secondary: 52A20}
\keywords{Concentration inequality; convex cone; Gaussian width; integral geometry; intrinsic volume; geometric probability; statistical dimension; Steiner formula; variance bound.}
\begin{document}

\begin{abstract}
The intrinsic volumes of a convex cone are geometric functionals that
return basic structural information about the cone.  Recent research has
demonstrated that conic intrinsic volumes are valuable for understanding the
behavior of random convex optimization problems.
This paper develops a systematic technique for studying conic intrinsic volumes
using methods from probability.  At the heart of this approach is a general Steiner
formula for cones.  This result converts questions about the intrinsic
volumes into questions about the projection of a Gaussian random vector
onto the cone, which can then be resolved using tools from Gaussian analysis.
The approach leads to new identities and bounds for the intrinsic volumes of
a cone, including a near-optimal concentration inequality.
\end{abstract}

\maketitle

\section{Introduction}

In the 1840s, Steiner developed a striking decomposition for the volume of
a Euclidean expansion of a polytope in $\R^3$.
The modern statement of Steiner's formula describes an expansion of
a compact convex set $K$ in $\R^d$:
\begin{equation} \label{eqn:euclid-steiner}
\operatorname{Vol}(K + \lambda \mathsf{B}_d) = \sum_{j=0}^d
	\lambda^{d-j} \cdot \operatorname{Vol}(\mathsf{B}_{d-j}) \cdot
	\mathcal{V}_{j}(K)
\quad\text{for $\lambda \geq 0$.}
\end{equation}
The symbol $\mathsf{B}_j$ refers to the Euclidean unit ball in $\R^j$,
and $+$ denotes the Minkowski sum.
In other words, the volume of the expansion is just a
polynomial whose coefficients depend on the set $K$.
The geometric functionals $\mathcal{V}_j$ that appear
in~\eqref{eqn:euclid-steiner} are called
\term{Euclidean intrinsic volumes}~\cite{McM:75}.  Some of these are
familiar, such as the usual volume $\mathcal{V}_d$, the surface area
$2 \, \mathcal{V}_{d-1}$, and the Euler characteristic $\mathcal{V}_0$.
They can all be interpreted as measures of content that are invariant
under rigid motions and isometric embedding~\cite{Sch:93}.

Beginning around 1940, researchers began to develop analogues
of the Steiner formula in spherical geometry~\cite{Hot:39,Wey:39,Her:43,All:48,San:50}.
In their modern form, these results express the size of an angular
expansion of a closed convex cone $C$ in $\R^d$:
\begin{equation} \label{eqn:sphere-steiner}
\operatorname{Vol}\big\{ \vct{x} \in \sphere{d-1} : \dist^2(\vct{x}, C) \leq \lambda \big\}
	= \sum_{j=0}^d \beta_{j,d}(\lambda) \cdot v_j(C)
	\quad\text{for $\lambda \in [0,1]$.}
\end{equation}
We have written $\sphere{d-1}$ for the Euclidean
unit sphere in $\R^d$, and the functions $\beta_{j,d} : [0, 1] \to \R_+$
do not depend on the cone $C$.
The geometric functionals $v_j$ that appear in~\eqref{eqn:sphere-steiner}
are called \term{conic intrinsic volumes}.\footnote{The $j$th conic intrinsic volume $v_j(C)$ corresponds to the $(j-1)$th spherical intrinsic volume $\nu_{j-1}(C\cap \sphere{d-1})$ that appears in the literature.}
These quantities capture fundamental structural information about a convex cone.
They are invariant under rotation; they do not depend on the embedding dimension;
and they arise in many other geometric problems~\cite{SchWei:08}.

The intrinsic volumes of a closed convex cone $C$ in $\R^d$ satisfy
several important identities~\cite[Thm.~6.5.5]{SchWei:08}.  In particular,
the numbers $v_0(C), \dots, v_d(C)$ are nonnegative and sum to one,
so they describe a probability distribution on the set $\{ 0, 1, 2, \dots, d \}$.
Thus, we can define a random variable $V_C$ by the relations
$$
\Prob\big\{ V_C = k \big\} = v_k(C)
\quad\text{for each $k = 0, 1, 2, \dots, d$.}
$$
This construction invites us to use probabilistic methods to study 
the cone $C$.

Recent research~\cite[Thm.~6.1]{ALMT:13} has determined that
the random variable $V_C$ concentrates sharply about its mean value
for every closed convex cone $C$.  In other words, most of the
intrinsic volumes of a cone have negligible size;
see Figure~\ref{fig:circ-cone} for a typical example.
As a consequence of this phenomenon,
a small number of statistics of $V_C$ capture the salient
information about the cone.  For many purposes, we only need to
know the mean, the variance,
and the type of tail decay.
This paper develops a systematic technique for collecting
this kind of information.

\begin{figure}[t]
\vspace{1pc}
\includegraphics[width=0.50\textwidth]{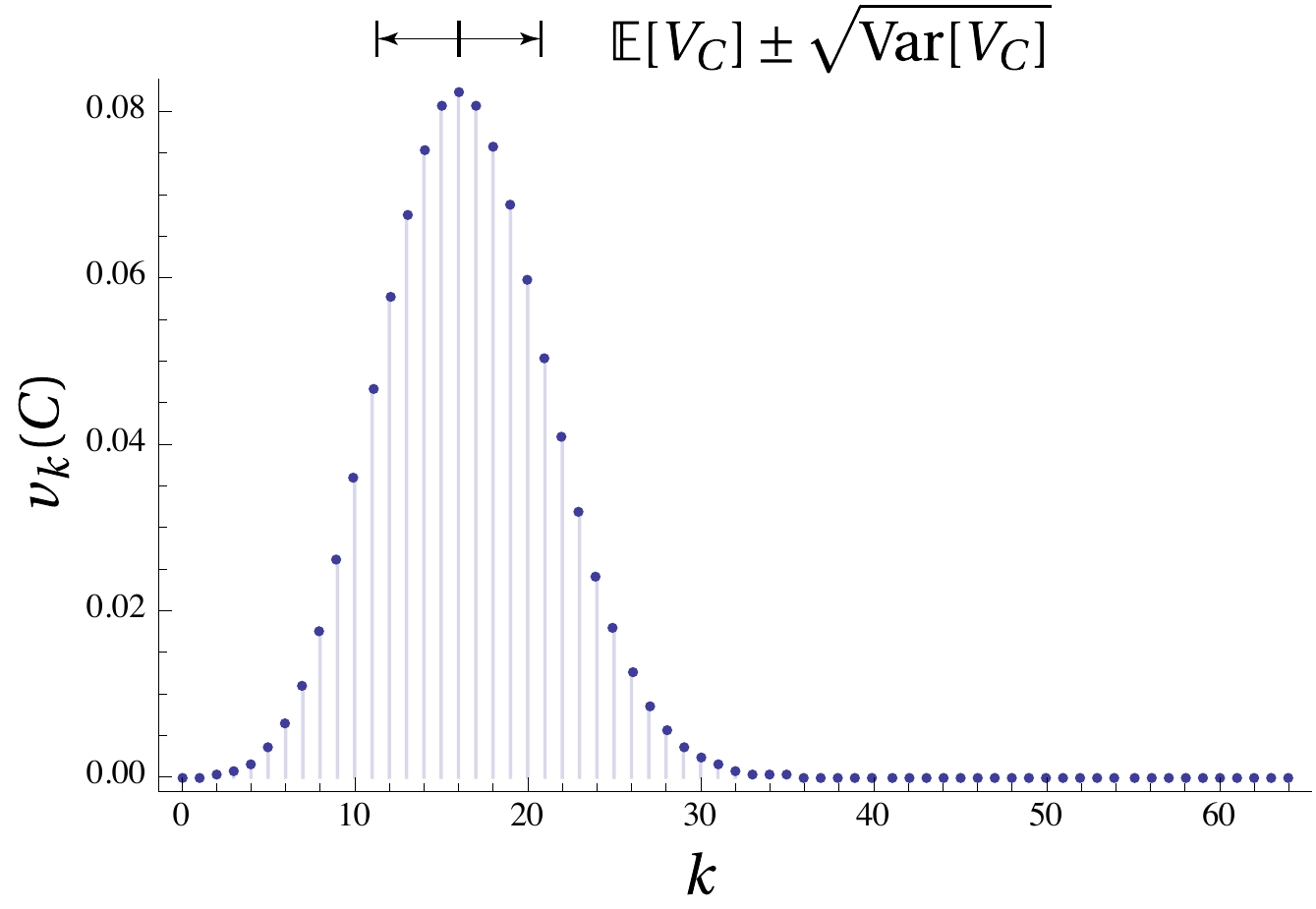}
\hspace{-0.5pc}
\includegraphics[width=0.50\textwidth]{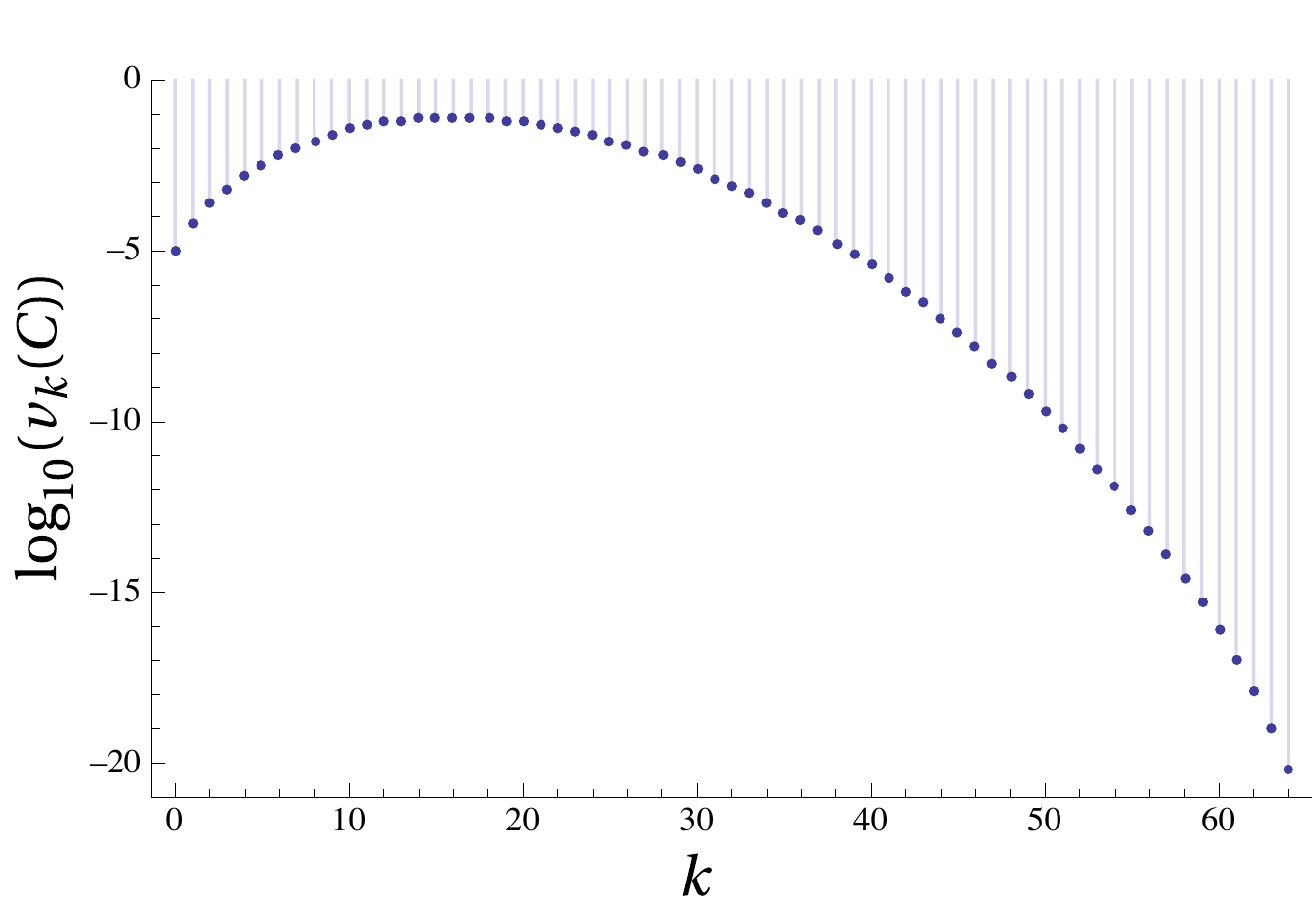}
\caption{\textbf{The intrinsic volumes of a circular cone.}
These two diagrams depict the intrinsic volumes $v_k(C)$
of a circular cone $C$ with angle $\pi/6$ in $\R^{64}$,
computed using the formulas from~\cite[Ex.~4.4.8]{Ame:11}.
The intrinsic volume random variable
$V_C$ has mean $\Expect[V_C] \approx 16.5$ and variance $\Var[V_C] \approx 23.25$.
The tails of $V_C$ exhibit Gaussian decay near the mean and Poisson
decay farther away.  
\textbf{[left]}  The intrinsic volumes $v_k(C)$ coincide with the
probability mass function of $V_C$.
\textbf{[right]} The logarithms $\log_{10}(v_k(C))$ of the intrinsic
volumes have a quadratic profile near $k = \Expect[V_C]$, which is indicative
of the Gaussian decay.}
\label{fig:circ-cone}
\end{figure}

Our method depends on a generalization of the spherical Steiner
formula~\eqref{eqn:sphere-steiner}.
This result, Theorem~\ref{thm:general-steiner},
allows us to compute statistics of the intrinsic volume random variable $V_C$ by
passing to a geometric random variable that we can study
directly.  The prospect for making this type of argument is dimly
visible in the spherical Steiner formula~\eqref{eqn:sphere-steiner}:
the right-hand side can be interpreted as a moment of $V_C$, while the left-hand side
reflects the probability that a certain geometric event occurs.
Our master Steiner formula provides the flexibility we need
to study a wider class of statistics.

This species of argument was developed in collaboration with our
coauthors Dennis Amelunxen and Martin Lotz.
In our earlier joint paper~\cite{ALMT:13},
we used a laborious version of the technique
to show that the intrinsic volumes concentrate.
Here, we simplify and strengthen the method
to obtain new relations for the variance of $V_C$
and to improve the concentration inequalities.

Our work fits into a larger program that studies convex optimization with sophisticated tools from geometry.
The link between conic geometry and convex optimization arises because convex cones play the same role in
convex analysis that subspaces play in linear algebra~\cite[p.~89--90]{HirLem:93}.
Indeed, when we study the behavior of convex optimization problems with random data,
the conic intrinsic volumes arise naturally; see, for example,~\cite{VerSpo:86,VerSpo:92,Don:06a,BurCucLot:08,
DonTan:10,BurCucLot:10, AmeBur:12,AmeBur:14,McCTro:12,AmeBur:13, ALMT:13,McC:13}.
Unfortunately, this program of research has been delayed by the apparent difficulty
of producing explicit bounds for conic intrinsic volumes.
As a consequence, we believe that it is timely to investigate the
concentration properties of the intrinsic volumes of a general cone.

\subsection{Roadmap}

Section~\ref{sec:intvols} contains the definition and basic properties
of the conic intrinsic volumes.
Section~\ref{sec:gen-steiner} states our master Steiner formula
for convex cones, and it explains how to derive formulas for the size of
an expansion of a convex cone.  We begin our probabilistic analysis of
the intrinsic volumes in Section~\ref{sec:prob-intvols}.  This material
includes formulas and bounds for the variance and exponential moments
of the intrinsic volume random variable.  Section~\ref{sec:intvol-product}
continues with a probabilistic treatment of the intrinsic volumes
of a product cone.  We provide several detailed examples of these
methods in Section~\ref{sec:examples}.  Afterward,
Section~\ref{sec:backgr-conv-cones} summarizes some background
material about convex cones in preparation for the proof of
master Steiner formula in Section~\ref{sec:proof-gener-steiner}.

\subsection{Notation and basic concepts}
\label{sec:convex-analysis}

Before commencing with the main development, let us set notation and recall some basic facts from convex analysis.
Section~\ref{sec:backgr-conv-cones} contains a more complete discussion; we provide cross-references as needed. 

We work in the Euclidean space $\R^d$, equipped with the standard inner product $\ip{ \cdot }{ \cdot}$, the associated norm $\norm{ \cdot }$, and the norm topology.  The symbols $\vct{0}$ and $\vct{0}_d$ refer to the origin of $\R^d$.
For a point $\vct{x} \in \R^d$ and a set $K \subset \R^d$, we define the distance $\dist( \vct{x}, K ) := \inf\big\{ \enormsm{\vct{x} - \vct{y}} : \vct{y} \in K \big\}$.  

A \term{convex cone} $C$ is a nonempty subset of $\R^d$ that satisfies
$$
\tau \cdot (\vct{x} + \vct{y}) \in C
\quad\text{for all $\tau > 0$ and $\vct{x}, \vct{y} \in C$.}
$$
We designate the family $\cC_d$ of all closed convex cones in $\R^d$.
A cone $C$ is \term{polyhedral} if it can be expressed as the intersection of a finite number of halfspaces:
$$
C = \bigcap_{i=1}^N \big\{ \vct{x} \in \R^d : \ip{ \vct{u}_i }{ \vct{x} } \geq 0 \big\}
\quad\text{for some $\vct{u}_i \in \R^d$.}
$$
For each cone $C \in \cC_d$, we define the \term{polar cone} $C^\polar \in \cC_d$ via the formula
$$
C^\polar := \big\{ \vct{u} \in \R^d : \ip{ \vct{u} }{ \vct{x} } \leq 0
\text{ for all $\vct{x} \in C$} \big\}.
$$
The polar of a polyhedral cone is always polyhedral.

We introduce the \term{metric projector} $\Proj_C$ onto a cone $C \in \cC_d$ by the formula
\begin{equation} \label{eqn:metric-proj}
\Proj_C : \R^d \to C
\qtq{where}
\Proj_C(\vct{x}) := \argmin\big\{ \enormsm{ \vct{x} - \vct{y} }^2 : \vct{y} \in C \big\}.
\end{equation}
The metric projector onto a closed convex cone is a nonnegatively homogeneous function:
$$
\Proj_C(\tau \vct{x}) = \tau \cdot \Proj_C(\vct{x})
\quad\text{for all $\tau \geq 0$ and $\vct{x} \in \R^d$.}
$$
The squared norm of the metric projection is a differentiable function:
\begin{equation} \label{eqn:grad-proj}
\grad \enorm{ \Proj_C(\vct{x}) }^2 = 2\, \Proj_C(\vct{x})
\quad\text{for all $\vct{x} \in \R^d$.}
\end{equation}
This result follows from~\cite[Thm.~2.26]{RW:98}.

We conclude with the basic notation concerning probability.  We write $\Prob$ for the probability of an event and $\Expect$ for the expectation operator.  The symbol $\sim$ denotes equality of distribution.  We reserve the letter $\vct{g}$ for a standard Gaussian vector, and $\vct{\theta}$ denotes a vector uniformly distributed on the sphere.  The dimensions are determined by context.

\section{The intrinsic volumes of a convex cone}
\label{sec:intvols}

We begin with an introduction to the conic intrinsic volumes
that is motivated by the treatment in~\cite{Ame:11}.
To each closed convex cone,
we can assign a sequence of intrinsic volumes. For polyhedral cones, these functionals
have a clear geometric meaning, so we start with
the definition for this special case.

\begin{definition}[Intrinsic volumes of a polyhedral cone] \label{def:intvols}
Let $C \in \cC_d$ be a polyhedral cone.  For $k = 0, 1, 2, \dots, d$, the \term{conic intrinsic volume} $v_k(C)$ is the quantity
$$
v_k(C) := \Prob\big\{ \text{$\Proj_C(\vct{g})$ lies in the relative interior of a $k$-dimensional face of $C$} \big\}.
$$
The metric projector $\Proj_C$ onto the cone is defined in~\eqref{eqn:metric-proj},
and the random vector $\vct{g}$ is drawn from the standard Gaussian distribution on $\R^d$.
\end{definition}

As explained in Section~\ref{sec:relat-betw-conic}, we can equip the set $\cC_d$
with the \term{conic Hausdorff metric} to form a compact metric space.  The
polyhedral cones form a dense subset of $\cC_d$, so it is natural
to use approximation to extend the definition of the intrinsic volumes
to nonpolyhedral cones.

\begin{definition}[Intrinsic volumes of a closed convex cone] \label{def:intvols-gen}
Let $C \in \cC_d$ be a closed convex cone.  Consider any sequence $( C_i )_{i \in \mathbb{N}}$ of polyhedral cones in $\cC_d$ where $C_i \to C$ in the
conic Hausdorff metric.  Define
\begin{equation} \label{eqn:intvol-limit}
v_k(C) := \lim_{i \to \infty} v_k(C_i)
\quad\text{for $k = 0, 1, 2, \dots, d$.}
\end{equation}
The geometric functionals $v_k : \cC_d \to [0, 1]$ are called
\term{conic intrinsic volumes}.
\end{definition}

\noindent
See Section~\ref{sec:continuity-intvols} for a proof that the limit in~\eqref{eqn:intvol-limit} is well defined.  The reader should be aware that the geometric interpretation of intrinsic volumes from Definition~\ref{def:intvols} breaks down for general cones because the limiting process does not preserve facial structure.

The conic intrinsic volumes have some remarkable properties.
Fix the ambient dimension $d$, and let $C \in \cC_d$ be a closed convex
cone in $\R^d$.  \textbf{The intrinsic volumes are...}

\begin{enumerate} \setlength{\itemsep}{2mm}
\item	{\textbf{Intrinsic.}}
The intrinsic volumes do not depend on the dimension of the space $\R^d$ in which the cone $C$ is embedded.   That is, for each natural number $r$,
$$
v_k( C \times \{\vct{0}_r\} ) =
	\begin{dcases}
	v_k(C), & 0 \leq k \leq d \\
	0, & d < k \leq d + r.
	\end{dcases}
$$

\item	{\textbf{Volumes.}}  Let $\gamma_d$ denote the standard Gaussian measure on $\R^d$.  Then $v_d(C) = \gamma_d(C)$ and $v_0(C) = \gamma_d(C^\polar)$ where $C^\polar$ denotes the polar cone.
The other intrinsic volumes, however, do not admit such a clear interpretation.

\item	{\textbf{Rotation invariant.}}  For each $d \times d$ orthogonal matrix $\mtx{Q}$, we have $v_k(\mtx{Q} C) = v_k(C)$.

\item {\textbf{Continuous.}} If  \(C_i \to C \) in the conic Hausdorff metric, then \(v_k(C_i) \to v_k(C)\). 

\item	{\textbf{A distribution.}}  The intrinsic volumes form a probability distribution on $\{ 0, 1, 2, \dots, d \}$.  That is,
$$
v_k(C) \geq 0
\qtq{and}
\sum_{j=0}^d v_j(C) = 1.
$$

\item	{\textbf{Indicators of dimension for a subspace.}}  For any $j$-dimensional subspace $L_j \subset \R^d$, we have
$$
v_k(L_j) = \begin{cases} 1, & k = j \\ 0, & k \neq j. \end{cases}
$$

\item	{\textbf{Reversed under polarity.}}  The intrinsic volumes of the polar cone $C^\polar$ satisfy
$$
v_k(C^\polar) = v_{d-k}(C).
$$
\end{enumerate}

\noindent
These claims follow from Definition~\ref{def:intvols-gen} using facts from
Sections~\ref{sec:convex-analysis} and~\ref{sec:backgr-conv-cones}
about the geometry of convex cones.

\begin{remark}[Notation for intrinsic volumes]
The notation $v_k$ for the $k$th intrinsic volume does not specify the ambient dimension.  This convention is justified because the intrinsic volumes of a cone do not depend on the embedding dimension.
\end{remark}

\section{A generalized Steiner formula for cones}
\label{sec:gen-steiner}

As we have seen, the intrinsic volumes of a cone form a probability distribution.  Probabilistic methods offer a powerful technique for studying the intrinsic volumes.  To pursue this idea, we want access to moments and other statistics of the sequence of intrinsic volumes.  We acquire this information using a general Steiner formula for cones.  
\subsection{The master Steiner formula}
\label{sec:master-steiner}

Let us introduce a class of Gaussian integrals.  Fix a Borel measurable bivariate function $f : \R_+^2 \to \R$.  Consider the geometric functional
\begin{equation} \label{eq:phi-f}
\phi_f : \cC_d \to \R
\qtq{where}
\phi_f(C) := \Expect \big[ f\big( \enormsm{ \Proj_C(\vct{g}) }^2, \ \enormsm{\Proj_{C^\polar}(\vct{g}) }^2 \big) \big].
\end{equation}
As usual,  $\vct{g}\in \R^d$  is a standard Gaussian vector, and the expectation is interpreted as a Lebesgue integral.
We can develop an elegant expansion of $\phi_f$
in terms of the conic intrinsic volumes.

\begin{theorem}[Master Steiner formula for cones] \label{thm:general-steiner}
Let $f : \R^2_+ \to \R$ be a Borel measurable function.
Then the geometric functional $\phi_f$ defined in~\eqref{eq:phi-f}
admits the expression
\begin{equation} \label{eq:gen-steiner}
\phi_f(C) = \sum_{k=0}^d \phi_f( L_k ) \cdot v_k(C)
\quad\text{for $C \in \cC_d$}
\end{equation}
provided that all the expectations in~\eqref{eq:gen-steiner} are finite.
Here, $L_k$ denotes a $k$-dimensional subspace of $\R^d$ and the conic intrinsic volumes $v_k$ are introduced in Definition~\ref{def:intvols-gen}.
\end{theorem}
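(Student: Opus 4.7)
My approach is to prove~\eqref{eq:gen-steiner} first for polyhedral cones via a face-by-face decomposition, then extend to arbitrary $C \in \cC_d$ by approximation. The crux of the argument, and the principal technical obstacle, is the polyhedral step: one must show that on the region $A_F := \{\vct{x} \in \R^d : \Proj_C(\vct{x}) \in \relint(F)\}$ associated with each face $F$, the metric projectors onto $C$ and $C^\polar$ reduce to linear orthogonal projections, after which everything follows from independence of the radial and angular parts of a Gaussian vector.

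\textbf{Polyhedral case.} Let $C$ be polyhedral and partition $\R^d$ (up to a Gaussian null set) into the regions $A_F$ as $F$ ranges over the faces of $C$. Writing $L := \lin(F)$ and letting $N_F \subseteq L^\perp$ denote the normal cone to $F$, the polyhedral Moreau decomposition supplies two key facts: $A_F = \relint(F) + N_F$ (an orthogonal sum in $L \oplus L^\perp$), and on $A_F$ the metric projectors $\Proj_C$ and $\Proj_{C^\polar}$ agree with the linear orthogonal projections $P_L$ and $P_{L^\perp}$. Split $\vct{g} = \vct{g}_1 + \vct{g}_2$ with independent components $\vct{g}_1 = P_L \vct{g}$ and $\vct{g}_2 = P_{L^\perp} \vct{g}$, and polar-decompose each $\vct{g}_i = R_i \vct{\theta}_i$. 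Because $\relint(F)$ and $N_F$ are cones, the indicator $\pInd_{A_F}(\vct{g})$ depends only on the directions $\vct{\theta}_1, \vct{\theta}_2$, while $f(\norm{\vct{g}_1}^2, \norm{\vct{g}_2}^2) = f(R_1^2, R_2^2)$ depends only on the radii. Mutual independence of radii and directions therefore factorizes the contribution of $A_F$:
\begin{equation*}
\Expect\bigl[f(R_1^2, R_2^2)\, \pInd_{A_F}(\vct{g})\bigr] = \Expect\bigl[f(R_1^2, R_2^2)\bigr] \cdot \Prob\bigl(\vct{g} \in A_F\bigr).
\end{equation*}
If $\dim F = k$, then $R_1^2 \sim \chisq_k$ and $R_2^2 \sim \chisq_{d-k}$ are independent, so the first factor equals $\phi_f(L_k)$ for any $k$-dimensional subspace $L_k$ (noting $L_k^\polar = L_k^\perp$). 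Summing over faces and grouping by dimension via Definition~\ref{def:intvols} yields~\eqref{eq:gen-steiner} in the polyhedral case.

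\textbf{General case.} Choose polyhedral $C_i \to C$ in the conic Hausdorff metric. Pointwise continuity of the metric projector under Hausdorff convergence of closed convex sets gives $\Proj_{C_i}(\vct{x}) \to \Proj_C(\vct{x})$ and $\Proj_{C_i^\polar}(\vct{x}) \to \Proj_{C^\polar}(\vct{x})$ for every $\vct{x}$, so the pushforward distribution $\mu_{C_i}$ of $\bigl(\norm{\Proj_{C_i}(\vct{g})}^2, \norm{\Proj_{C_i^\polar}(\vct{g})}^2\bigr)$ converges weakly to $\mu_C$. The polyhedral identity rewrites $\mu_{C_i} = \sum_{k=0}^d v_k(C_i)\, \mu_{L_k}$, where $\mu_{L_k}$ is the joint distribution on $\R_+^2$ of independent $\chisq_k$ and $\chisq_{d-k}$ variates; continuity of each $v_k$ (Definition~\ref{def:intvols-gen}) passes the limit through the coefficients. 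Uniqueness of weak limits forces $\mu_C = \sum_k v_k(C)\, \mu_{L_k}$ as Borel measures, and integrating any $f$ for which both sides of~\eqref{eq:gen-steiner} are finite delivers the result in full generality.
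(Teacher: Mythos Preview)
Your proof is correct and follows essentially the same architecture as the paper's: a face-by-face decomposition for polyhedral cones (the paper's Lemma~8.1), followed by polyhedral approximation and a measure-identification step (the paper's Lemmas~8.4 and~8.5). Your polyhedral step packages the polar-coordinate computation as independence of the radial and angular parts of a Gaussian, and your extension step merges the paper's two lemmas by passing directly to equality of the pushforward measures via uniqueness of weak limits; both are cosmetic reformulations rather than genuinely different arguments.
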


The coefficients \(\phi_f(L_k)\) in the expression~\eqref{eq:gen-steiner} have an alternative form that is convenient for computations.  Let $L_k$ be an arbitrary $k$-dimensional subspace of $\R^d$.  According to the definition~\eqref{eq:phi-f} of the functional $\phi_f$,
$$
\phi_f(L_k) = \Expect\big[ f\big( \enormsm{\Proj_{L_k}(\vct{g})}^2, \
	\enormsm{\Proj_{{L_k}^\polar}(\vct{g})}^2 \big) \big].
$$
The marginal property of the standard Gaussian vector $\vct{g}$ ensures that $\Proj_{L_k}(\vct{g})$ and $\Proj_{{L_k}^\polar}(\vct{g})$ are independent standard Gaussian vectors supported on $L_k$ and ${L_k}^\polar$.  Thus,
$\enormsm{\Proj_{L_k}(\vct{g})}^2$ and $\enormsm{\Proj_{{L_k}^\polar}(\vct{g})}^2$ are independent chi-square random variables with $k$ and $d-k$ degrees of freedom respectively.  Note the convention that a chi-square variable with zero degrees of freedom is identically zero.  In view of this fact, we have the following equivalent of Theorem~\ref{thm:general-steiner}.

\begin{corollary} \label{cor:general-steiner}
Instate the hypotheses and notation of Theorem~\ref{thm:general-steiner}.
Let $\big\{ X_0, \dots, X_d \big\}$ be an independent sequence of random variables where $X_k$ has the chi-square distribution with $k$ degrees of freedom, and let $\big\{ X'_{\smash{0}}, \dots, X'_{\smash{d}} \big\}$ be an independent copy of this sequence.  Then
\begin{equation} \label{eq:simple-steiner}
\phi_f(C)
= \sum_{k=0}^d \Expect\big[ f \big(X_k^{\phantom.}, \ X'_{d-k} \big) \big] \cdot v_k(C)
\quad\text{for $C \in \cC_d$.}
\end{equation}
\end{corollary}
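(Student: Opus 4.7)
The plan is direct: apply Theorem~\ref{thm:general-steiner} and reinterpret each coefficient $\phi_f(L_k)$ probabilistically. The substitution is already foreshadowed in the paragraph preceding the corollary, so the argument amounts to a formal rewriting of~\eqref{eq:gen-steiner}.

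First I would fix a $k$-dimensional subspace $L_k \subset \R^d$ and examine the pair $\big(\Proj_{L_k}(\vct{g}),\ \Proj_{{L_k}^\polar}(\vct{g})\big)$. Since $L_k$ is a linear subspace, both metric projectors reduce to the orthogonal linear projections onto the complementary subspaces $L_k$ and ${L_k}^\polar$, so the two components of this pair are jointly Gaussian with zero cross-covariance and are therefore independent. The same argument (or direct application of the rotational invariance of $\gamma_d$) shows that $\Proj_{L_k}(\vct{g})$ is a standard Gaussian vector supported on $L_k$ and $\Proj_{{L_k}^\polar}(\vct{g})$ is a standard Gaussian vector supported on ${L_k}^\polar$. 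Consequently $\enormsm{\Proj_{L_k}(\vct{g})}^2$ and $\enormsm{\Proj_{{L_k}^\polar}(\vct{g})}^2$ are independent chi-square variables with $k$ and $d-k$ degrees of freedom, matching the joint law of $(X_k,\, X'_{d-k})$ from the statement. Evaluating the definition~\eqref{eq:phi-f} of $\phi_f$ at $L_k$ therefore yields
$$
\phi_f(L_k) \;=\; \Expect\bigl[ f\bigl( X_k^{\phantom.},\, X'_{d-k} \bigr) \bigr].
$$

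Second, I would substitute this identity term by term into the master Steiner formula~\eqref{eq:gen-steiner}. Because the sum is finite and the finiteness hypothesis on the coefficients $\phi_f(L_k)$ transfers verbatim to finiteness of the expectations $\Expect[f(X_k, X'_{d-k})]$, no convergence issue arises. The resulting identity is precisely~\eqref{eq:simple-steiner}, which is the assertion of the corollary.

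Because this argument adds no new geometric or probabilistic content beyond Theorem~\ref{thm:general-steiner} and the marginal property of the standard Gaussian, there is no substantive obstacle. The only minor bookkeeping concerns the degenerate endpoints $k = 0$ and $k = d$, where one of the two projections is trivial; these cases are handled consistently by the stated convention that a chi-square variable with zero degrees of freedom is identically zero, so that the corresponding factor in $f(X_k, X'_{d-k})$ is evaluated at $0$.
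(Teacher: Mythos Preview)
Your proposal is correct and follows exactly the approach the paper takes: the paragraph immediately preceding the corollary already identifies $\phi_f(L_k)$ with $\Expect[f(X_k, X'_{d-k})]$ via the marginal property of the standard Gaussian, and the corollary is then a direct substitution into~\eqref{eq:gen-steiner}. Your handling of the endpoints $k=0$ and $k=d$ via the zero-degrees-of-freedom convention also matches the paper.
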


Corollary~\ref{cor:general-steiner} has an appealing probabilistic consequence.
For every cone $C \in \cC_d$, the random variable $\enormsm{\Proj_C(\vct{g})}^2$ is a mixture of
chi-square random variables $X_0, \dots, X_d$ where the mixture coefficients $v_k(C)$ are determined solely by the cone.
This fact corresponds with a classical observation from the field of constrained statistical inference, where the random variate $\enormsm{\Proj_C(\vct{g})}^2$ is known as the chi-bar-squared statistic~\cite[Sec.~3.4]{SilSen:05}.

We outline the proof of Theorem~\ref{thm:general-steiner} in
Sections~\ref{sec:backgr-conv-cones} and~\ref{sec:proof-gener-steiner}.
The argument involves techniques that are already familiar to experts.  Many of the core ideas appear in McMullen's influential paper~\cite{McM:75}.  A similar approach has been used in hyperbolic integral geometry~\cite[p.~242]{San:80}; see also the proof of~\cite[Thm.~6.5.1]{SchWei:08}.  The main technical novelty is our method for showing that the conic intrinsic volumes are continuous with respect to the conic Hausdorff metric.

\subsection{How big is the expansion of a cone?}
\label{sec:expansion}

The Euclidean Steiner formula~\eqref{eqn:euclid-steiner} describes the volume of a Euclidean expansion of a compact convex set.  Although it may not be obvious from the identity~\eqref{eq:gen-steiner}, the master Steiner formula contains information about the volume of an expansion of a convex cone.  This section explains the connection, which justifies our decision to call Theorem~\ref{thm:general-steiner} a Steiner formula.

First, we argue that there is a simple expression for the Gaussian measure of a Euclidean expansion of a convex cone.

\begin{proposition}[Gaussian Steiner formula] \label{prop2:gauss-stein}
For each cone $C \in \cC_d$ and each number $\lambda \geq 0$,
\begin{equation} \label{eq:gauss2-stein}
\gamma_d(C + \sqrt{\lambda} \, \mathsf{B}_d)
	= \Prob\big\{ \dist^2( \vct{g}, C ) \leq \lambda \big\}	
	= \sum_{k=0}^d \Prob\big\{X_{d-k} \leq \lambda \big\} \cdot v_k(C)
\end{equation}
where $\gamma_d$ is the standard Gaussian measure on $\R^d$ and
$\mathsf{B}_d$ is the unit ball in $\R^d$.  The random variable
$X_j$ follows the chi-square distribution with $j$ degrees of freedom.
\end{proposition}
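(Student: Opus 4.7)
The plan is to apply Corollary~\ref{cor:general-steiner} to the simplest possible indicator test function, after first rewriting the Gaussian measure of the expansion as an expectation involving the distance from $\vct{g}$ to $C$.

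First I would dispense with the initial equality. By the definition of the Minkowski sum, a point $\vct{x}$ lies in $C + \sqrt{\lambda}\,\mathsf{B}_d$ precisely when $\vct{x} = \vct{c} + \sqrt{\lambda}\,\vct{u}$ for some $\vct{c} \in C$ and $\vct{u} \in \mathsf{B}_d$, which is equivalent to $\dist(\vct{x}, C) \leq \sqrt{\lambda}$, i.e.\ $\dist^2(\vct{x}, C) \leq \lambda$. Substituting the standard Gaussian vector $\vct{g}$ and taking $\gamma_d$-measure gives the first equality $\gamma_d(C + \sqrt{\lambda}\,\mathsf{B}_d) = \Prob\{\dist^2(\vct{g}, C) \leq \lambda\}$.

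Next I would recast the squared distance in terms of the polar projection. The Moreau-type decomposition for closed convex cones, recalled in Section~\ref{sec:convex-analysis} and developed further in Section~\ref{sec:backgr-conv-cones}, provides the orthogonal splitting
$$
\vct{x} = \Proj_C(\vct{x}) + \Proj_{C^\polar}(\vct{x}) \qtq{with} \ip{\Proj_C(\vct{x})}{\Proj_{C^\polar}(\vct{x})} = 0.
$$
Therefore $\dist^2(\vct{x}, C) = \enormsq{\vct{x} - \Proj_C(\vct{x})} = \enormsq{\Proj_{C^\polar}(\vct{x})}$ for every $\vct{x} \in \R^d$.

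Finally I would apply the master Steiner formula. Take the Borel measurable bounded function $f : \R_+^2 \to \R$ defined by $f(s, t) := \pInd\{t \leq \lambda\}$. Then by the previous step
$$
\phi_f(C) = \Expect\bigl[\pInd\{\enormsq{\Proj_{C^\polar}(\vct{g})} \leq \lambda\}\bigr] = \Prob\{\dist^2(\vct{g}, C) \leq \lambda\},
$$
and Corollary~\ref{cor:general-steiner} expands the same quantity as $\sum_{k=0}^d \Expect[\pInd\{X'_{d-k} \leq \lambda\}] \cdot v_k(C) = \sum_{k=0}^d \Prob\{X_{d-k} \leq \lambda\} \cdot v_k(C)$. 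Since $|f| \leq 1$, every expectation is finite, so the finiteness hypothesis of the corollary is satisfied automatically. There is no real obstacle in this proof—the proposition is essentially the master Steiner formula evaluated at the simplest nontrivial test function—so the only ingredients beyond Corollary~\ref{cor:general-steiner} are the orthogonal decomposition above and the trivial measurability of an indicator.
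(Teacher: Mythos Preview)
Your argument is correct and follows essentially the same route as the paper: both proofs invoke Corollary~\ref{cor:general-steiner} with the indicator $f(a,b)=\pInd\{b\le\lambda\}$ after identifying $\dist^2(\vct{g},C)=\enormsq{\Proj_{C^\polar}(\vct{g})}$. The only difference is cosmetic—you spell out the Minkowski-sum equivalence and the Moreau decomposition a bit more explicitly than the paper does.
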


\begin{proof}
The first identity in~\eqref{eq:gauss2-stein} is immediate.  For the second,
we appeal to Corollary~\ref{cor:general-steiner} with the function
$$
f(a,b) = \begin{cases} 1, & b \leq \lambda \\ 0, & \text{otherwise.} \end{cases}
$$
This step yields the relation
$$
\Prob\big\{ \dist^2(\vct{g}, C) \leq \lambda \big\}
	= \Prob\big\{ \enormsm{\Proj_{C^\polar}(\vct{g})}^2 \leq \lambda \big\}
	= \sum_{k=0}^d \Expect \big[ f \big( X_k, X'_{d-k} \big) \big]
	= \sum_{k=0}^d \Prob\big\{ X'_{d-k} \leq \lambda \big\} \cdot v_k(C).
$$
The first equality depends on the representation~\eqref{eqn:dist-proj} of the distance
to a cone in terms of the metric projector onto the polar cone.  The result~\eqref{eq:gauss2-stein} follows because $X'_{d-k}$ has the same distribution as $X_{d-k}$.
\end{proof}

We can also establish the spherical Steiner formula~\eqref{eqn:sphere-steiner} as a consequence of Theorem~\ref{thm:general-steiner} by replacing the Gaussian vector $\vct{g}$ in Proposition~\ref{prop2:gauss-stein} with a random vector $\vct{\theta}$ that is uniformly distributed on the Euclidean unit sphere.  This strategy leads to an expression for the proportion of the sphere subtended by an angular expansion of the cone.

\begin{proposition}[Spherical Steiner formula]
\label{prop2:sphere-stein}
For each cone $C \in \cC_d$ and each number $\lambda \in [0, 1]$, it holds that
\begin{equation} \label{eq:sph-stein-intv}
\Prob\big\{ \dist^2(\vct{\theta}, C) \leq \lambda \big\}
	= \sum_{k=0}^d \Prob\big\{ B_{k,d-k} \leq \lambda \big\} \cdot v_k(C).
\end{equation}
The random vector $\vct{\theta}$ is drawn from the uniform distribution on the unit sphere $\sphere{d-1}$ in $\R^d$, and the random variable $B_{j, \ell}$ follows the $\textsc{beta}\big(\half j, \ \half \ell \big)$ distribution.  
\end{proposition}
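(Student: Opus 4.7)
The plan is to mirror the proof of Proposition~\ref{prop2:gauss-stein}, now applying Corollary~\ref{cor:general-steiner} to an indicator function tailored to the spherical setup. The key device is the representation $\vct{\theta} \sim \vct{g}/\norm{\vct{g}}$, which holds because the direction of a standard Gaussian is uniform on the sphere.

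First, I would use the nonnegative homogeneity of the metric projectors $\Proj_C$ and $\Proj_{C^\polar}$, together with the distance representation~\eqref{eqn:dist-proj} and the Moreau decomposition $\norm{\vct{g}}^2 = \norm{\Proj_C(\vct{g})}^2 + \norm{\Proj_{C^\polar}(\vct{g})}^2$, to obtain
$$
\dist^2(\vct{\theta}, C) \sim \frac{\norm{\Proj_{C^\polar}(\vct{g})}^2}{\norm{\Proj_C(\vct{g})}^2 + \norm{\Proj_{C^\polar}(\vct{g})}^2}.
$$
This writes the squared spherical distance as a function of the pair $(\norm{\Proj_C(\vct{g})}^2,\ \norm{\Proj_{C^\polar}(\vct{g})}^2)$, precisely the variables on which the master Steiner formula acts.

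Next, I would apply Corollary~\ref{cor:general-steiner} to the Borel function $f(a,b) = 1$ if $b/(a+b) \leq \lambda$ and $f(a,b) = 0$ otherwise (with a harmless convention at the origin). The corollary yields
$$
\Prob\bigl\{ \dist^2(\vct{\theta}, C) \leq \lambda \bigr\} = \sum_{k=0}^d \Prob\bigl\{ X'_{d-k}/(X_k + X'_{d-k}) \leq \lambda \bigr\} \cdot v_k(C),
$$
where $X_k$ and $X'_{d-k}$ are independent chi-squared variables with the indicated degrees of freedom. To conclude, I would invoke the classical identity that if $U \sim \chi^2_m$ and $V \sim \chi^2_n$ are independent, then $U/(U+V) \sim \textsc{beta}(m/2, n/2)$; combined with the elementary symmetry $U/(U+V) = 1 - V/(U+V)$, this identifies the $k$th coefficient as $\Prob\{B_{k, d-k} \leq \lambda\}$, completing the proof. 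The boundary cases $k=0$ and $k=d$ are absorbed by the paper's convention that $\chi^2_0 \equiv 0$, which makes the corresponding Beta law a degenerate mass of the right type.

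The argument poses no serious obstacle: it is the spherical analogue of the Gaussian argument, differing only in using a ratio rather than a one-sided threshold inside the indicator. The only care required is the routine bookkeeping to ensure that the roles of $X_k$ (paired with $C$) and $X'_{d-k}$ (paired with $C^\polar$) produce the Beta parameters in the order $(k, d-k)$ claimed in the statement.
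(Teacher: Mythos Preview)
Your approach is essentially the paper's: both rewrite $\dist^2(\vct{\theta},C)$ in terms of $\enormsm{\Proj_C(\vct{g})}^2$ and $\enormsm{\Proj_{C^\polar}(\vct{g})}^2$ via $\vct{\theta}\sim\vct{g}/\enormsm{\vct{g}}$, apply Corollary~\ref{cor:general-steiner} to an indicator, and identify the coefficients as beta tail probabilities. The paper uses the equivalent indicator $f(a,b)=\pInd\{b\leq\lambda(1-\lambda)^{-1}a\}$ and treats $\lambda=1$ separately, whereas your ratio form $b/(a+b)\leq\lambda$ handles all $\lambda\in[0,1]$ at once; this is a cosmetic difference.

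The one place your argument slips is the final bookkeeping. By the chi-square--beta identity you quote (taking $U=X'_{d-k}$, $V=X_k$), the ratio $X'_{d-k}/(X_k+X'_{d-k})$ is already $\textsc{beta}\big(\tfrac12(d-k),\tfrac12 k\big)=B_{d-k,k}$; the symmetry $U/(U+V)=1-V/(U+V)$ only yields $\Prob\{B_{d-k,k}\leq\lambda\}=\Prob\{B_{k,d-k}\geq 1-\lambda\}$, which is not $\Prob\{B_{k,d-k}\leq\lambda\}$. In fact the indices in the displayed statement are transposed: both your computation and the paper's produce the coefficient $\Prob\{B_{d-k,k}\leq\lambda\}$. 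A sanity check at $C=\R^d$ confirms this---there the left side is identically~$1$, while under the paper's convention $B_{d,0}\equiv 1$ gives $\Prob\{B_{d,0}\leq\lambda\}=0$ for $\lambda<1$, whereas $B_{0,d}\equiv 0$ gives the correct answer. So your derivation is sound up to this typo in the target; drop the spurious symmetry step and record the beta parameters in the order $(d-k,k)$.
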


\begin{proof} When $\lambda = 1$, both sides of~\eqref{eq:sph-stein-intv} equal one.  For $\lambda < 1$,
we convert the spherical variable to a Gaussian using
the relation $\vct{\theta} \sim \vct{g} / \enormsm{\vct{g}}$.
It follows that
$$
\Prob\big\{ \dist^2(\vct{\theta}, C) \leq \lambda \big\}
	= \Prob\big\{ \enormsm{\Proj_{C^\polar}(\vct{g})}^2
	\leq \lambda (1 - \lambda)^{-1} \cdot \enormsm{\Proj_{C}(\vct{g})}^2 \big\}.
$$
This identity depends on the representation~\eqref{eqn:dist-proj} of the distance, the nonnegative homogeneity of the metric projector $\Proj_{C^\polar}$, and the Pythagorean relation~\eqref{eqn:pythag}.
Apply Corollary~\ref{cor:general-steiner} with the function
$$
f(a,b) = \begin{cases} 1, & b \leq \lambda(1-\lambda)^{-1} a \\
	0, & \text{otherwise}. \end{cases}
$$
To finish, we recall the geometric interpretation~\cite{Art:02} of the beta random variable: $B_{j,d-j} \sim \enormsm{\Proj_{L_j}(\vct{\theta})}^2$ where $L_j$ is a $j$-dimensional subspace of $\R^d$.
\end{proof}

Neither the Gaussian Steiner formula~\eqref{eq:gauss2-stein} nor the spherical Steiner formula~\eqref{eq:sph-stein-intv} is new.  The spherical formula is classical~\cite{All:48}, while the Gaussian formula has antecedents in the statistics literature~\cite{SilSen:05,Tay:06}.  There is novelty, however, in our method of condensing both results from the master Steiner formula~\eqref{eq:gen-steiner}.

\section{Probabilistic analysis of intrinsic volumes}
\label{sec:prob-intvols}

In this section, we apply probabilistic methods
to the conic intrinsic volumes.  Our main tool is the master Steiner
formula, Theorem~\ref{thm:general-steiner}, which we use repeatedly to
convert statements about the intrinsic volumes of a cone into statements
about the projection of a Gaussian vector onto the cone.  We may then apply
methods from Gaussian analysis to study this random variable.

\subsection{The intrinsic volume random variable}
\label{sec:intvol-rv}

Definitions~\ref{def:intvols} and~\ref{def:intvols-gen} make it clear that the intrinsic volumes form a probability distribution.  This observation suggests that it would be fruitful to analyze the intrinsic volumes using techniques from probability.  We begin with the key definition.

\begin{definition}[Intrinsic volume random variable]
Let $C \in \cC_d$ be a closed convex cone.  The \term{intrinsic volume random variable} $V_C$ has the distribution
\begin{equation*} \label{eqn:intvol-rv}
\Prob\big\{ V_C = k \big\}
	= v_k(C)
	\quad\text{for $k = 0,1,2, \dots, d$.}
\end{equation*}
\end{definition}

Notice that the intrinsic volume random variable of a cone $C \in \cC_d$ and its polar have a tight relationship:
$$
\Prob\big\{ V_{C^\polar} = k \big\}
	= v_k(C^\polar)
	= v_{d-k}(C)
	= \Prob\big\{ V_C = d - k \big\}
$$
because polarity reverses the sequence of intrinsic volumes.  In other words, $V_{C^\polar} \sim d - V_C$.

\subsection{The statistical dimension of a cone}
\label{sec:intvol-mean}

The expected value of the intrinsic volume random variable $V_C$ has a distinguished place in the theory because $V_C$ concentrates sharply about this point.  In anticipation of this result, we glorify the expectation of $V_C$ with its own name and notation.

\begin{definition}[Statistical dimension~\protect{\cite[Sec.~5.3]{ALMT:13}}] \label{def:sdim}
The \term{statistical dimension} $\delta(C)$ of a cone $C \in \cC_d$ is the quantity
\begin{equation} \label{eqn:sdim}
\sdim(C) := \Expect[ V_C ] = \sum_{k=0}^d k \, v_k(C).
\end{equation}
\end{definition}

The statistical dimension of a cone really is a measure of its dimension.  In particular,
\begin{equation} \label{eqn:sdim-dim}
\sdim( L ) = \dim( L )
\quad\text{for each subspace $L \subset \R^d$.}
\end{equation}
In fact, the statistical dimension is the canonical extension
of the dimension of a subspace to the class of convex cones~\cite[Sec.~5.3]{ALMT:13}.
By this, we mean that the statistical dimension
is the only rotation invariant, continuous, localizable valuation on $\cC_d$ that satisfies~\eqref{eqn:sdim-dim}.
See~\cite[p.~254 and Thm.~6.5.4]{SchWei:08} for further
information about the unexplained technical terms.

The statistical dimension interacts beautifully with the polarity operation.  In particular,
\begin{equation*} \label{eqn:totality}
\delta(C) + \delta(C^\polar)
	= \Expect[ V_C ] + \Expect[ V_{C^\polar} ]
	= \Expect[ V_C ] + \Expect[ d - V_C ]
	= d.
\end{equation*}
This formula allows us to evaluate the statistical dimension for an important class of cones.  A closed convex cone $C$ is \term{self-dual} when it satisfies the identity $C = - C^\polar$.  Examples include the nonnegative orthant, the second-order cone, and the cone of positive-semidefinite matrices.  We have the identity
\begin{equation} \label{eqn:self-dual}
\sdim(C) = \half d
\quad \text{for a self-dual cone $C \in \cC_d$.}
\end{equation}

The statistical dimension of a cone can be expressed in terms
of the projection of a standard Gaussian vector onto the
cone~\cite[Prop.~5.11]{ALMT:13}.  The master Steiner
formula gives an easy proof of this result.

\begin{proposition}[Statistical dimension] \label{prop:sdim}
Let $C \in \cC_d$ be a closed convex cone.  Then
$$
\delta(C) = \Expect \big[ \enormsm{\Proj_C(\vct{g})}^2 \big].
$$
\end{proposition}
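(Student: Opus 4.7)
The plan is to apply the master Steiner formula of Corollary~\ref{cor:general-steiner} to a simple linear test function that singles out a first moment of $V_C$. Specifically, I would take
\[
f(a,b) := a \qquad \text{for } (a,b) \in \R_+^2,
\]
so that the geometric functional defined in~\eqref{eq:phi-f} reduces to $\phi_f(C) = \Expect\bigl[ \enormsm{\Proj_C(\vct{g})}^2 \bigr]$, which is precisely the quantity we wish to identify with $\sdim(C)$.

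Before invoking the theorem, I would check that the integrability hypothesis holds. Because $\Proj_C(\vct{g}) \in C$ and $\vct{g} = \Proj_C(\vct{g}) + \Proj_{C^\polar}(\vct{g})$ orthogonally (the Pythagorean relation~\eqref{eqn:pythag} referenced in the proof of Proposition~\ref{prop2:sphere-stein}), we have $\enormsm{\Proj_C(\vct{g})}^2 \leq \enormsm{\vct{g}}^2$, whence $\phi_f(C) \leq d$. Likewise, for a $k$-dimensional subspace $L_k$, Corollary~\ref{cor:general-steiner} gives $\phi_f(L_k) = \Expect[X_k] = k$, which is finite. Thus all expectations in~\eqref{eq:simple-steiner} are finite and the corollary applies.

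Finally, I would plug in: Corollary~\ref{cor:general-steiner} yields
\[
\Expect\bigl[\enormsm{\Proj_C(\vct{g})}^2\bigr] \;=\; \sum_{k=0}^d \Expect\bigl[f(X_k,X'_{d-k})\bigr] \cdot v_k(C) \;=\; \sum_{k=0}^d \Expect[X_k] \cdot v_k(C) \;=\; \sum_{k=0}^d k\, v_k(C),
\]
and the last sum is exactly $\sdim(C)$ by Definition~\ref{def:sdim}.

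There is no substantive obstacle here: this is a one-line consequence of the master Steiner formula once a first-moment test function is selected. The only mild care needed is the integrability check, which is automatic from the non-expansiveness of the metric projector onto a closed convex cone.
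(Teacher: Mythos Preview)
Your proposal is correct and follows essentially the same approach as the paper: apply Corollary~\ref{cor:general-steiner} with $f(a,b)=a$, use $\Expect[X_k]=k$, and identify the resulting sum with $\sdim(C)$ via Definition~\ref{def:sdim}. The only difference is that you include an explicit integrability check, which the paper omits as obvious.
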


The identity in Proposition~\ref{prop:sdim} can be used to evaluate
the statistical dimension for many cones of interest.
See~\cite[Sec.~4]{ALMT:13} for details and examples.

\begin{proof}
The master Steiner formula, Corollary~\ref{cor:general-steiner},
with function $f(a,b) = a$ states that
$$
\Expect\big[ \enormsm{\Proj_C(\vct{g})}^2 \big]
	= \sum_{k=0}^d \Expect[ X_k ] \cdot v_k(C)
	= \sum_{k=0}^d k \, v_k(C)
	= \delta(C).
$$
Indeed, a chi-square variable $X_k$ with $k$ degrees of freedom has expectation $k$.
\end{proof}

\subsection{The variance of the intrinsic volumes}
\label{sec:intvol-var}

The variance of the intrinsic volume random variable tells us how tightly
the intrinsic volumes cluster around their mean value.  We can find an
explicit expression for the variance in terms of the projection of a
Gaussian vector onto the cone.

\begin{proposition}[Variance of the intrinsic volumes] \label{prop:variance}
Let $C \in \cC_d$ be a closed convex cone.  Then
\begin{align} 
\Var[ V_C ] &= \Var\big[ \enormsm{\Proj_C(\vct{g})}^2 \big]
	- 2 \delta(C) \label{eqn:variance-1}\\
	&= \Var\big[ \enormsm{\Proj_{C^\polar}(\vct{g})}^2 \big]
	- 2 \delta(C^\polar) = \Var[V_{C^\polar}]. \label{eqn:variance-2}
\end{align}
\end{proposition}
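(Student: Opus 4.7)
The plan is to apply the master Steiner formula (in the form of Corollary~\ref{cor:general-steiner}) with the function $f(a,b) = a^2$, which converts the fourth moment $\Expect\big[\enormsm{\Proj_C(\vct{g})}^4\big]$ into a moment of the intrinsic volume random variable $V_C$. The key ingredient is the elementary fact that a chi-square variable $X_k$ with $k$ degrees of freedom satisfies $\Expect[X_k] = k$ and $\Expect[X_k^2] = k(k+2)$, so that the master Steiner formula produces $\sum_{k} k(k+2)\, v_k(C) = \Expect[V_C^2] + 2\delta(C)$.

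More precisely, first I would recall from Proposition~\ref{prop:sdim} that $\Expect\big[\enormsm{\Proj_C(\vct{g})}^2\big] = \delta(C)$. Then, applying Corollary~\ref{cor:general-steiner} with $f(a,b) = a^2$ gives
\begin{equation*}
\Expect\big[\enormsm{\Proj_C(\vct{g})}^4\big] = \sum_{k=0}^d \Expect\big[X_k^2\big]\, v_k(C) = \sum_{k=0}^d k(k+2)\, v_k(C) = \Expect[V_C^2] + 2\delta(C).
\end{equation*}
Subtracting $\delta(C)^2$ from both sides and rearranging yields
\begin{equation*}
\Var\big[\enormsm{\Proj_C(\vct{g})}^2\big] = \Expect[V_C^2] - \delta(C)^2 + 2\delta(C) = \Var[V_C] + 2\delta(C),
\end{equation*}
which is exactly~\eqref{eqn:variance-1}.

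For the chain of equalities in~\eqref{eqn:variance-2}, I would use the polarity relation $V_{C^\polar} \sim d - V_C$ noted in Section~\ref{sec:intvol-rv}, which immediately gives $\Var[V_{C^\polar}] = \Var[d - V_C] = \Var[V_C]$. Applying the already-established identity~\eqref{eqn:variance-1} to the polar cone $C^\polar$ in place of $C$ then yields $\Var[V_{C^\polar}] = \Var\big[\enormsm{\Proj_{C^\polar}(\vct{g})}^2\big] - 2\delta(C^\polar)$, closing the loop.

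There is no real obstacle here: the proof is essentially a one-line computation once one has the master Steiner formula and the moments of the chi-square distribution. The only thing to be careful about is that the hypothesis of Corollary~\ref{cor:general-steiner} requires the expectations $\Expect[f(X_k, X'_{d-k})] = \Expect[X_k^2] = k(k+2)$ to be finite, which is trivially true. So the work is purely bookkeeping—matching the polynomial $k(k+2)$ to $\Expect[V_C^2] + 2\Expect[V_C]$ and tidying up the algebra.
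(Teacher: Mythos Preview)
Your proposal is correct and matches the paper's own proof essentially line for line: both apply Corollary~\ref{cor:general-steiner} with $f(a,b)=a^2$, use $\Expect[X_k^2]=k^2+2k$ to identify $\Expect\big[\enormsm{\Proj_C(\vct{g})}^4\big]=\Expect[V_C^2]+2\delta(C)$, subtract $\delta(C)^2$ via Proposition~\ref{prop:sdim}, and then derive~\eqref{eqn:variance-2} from the polarity relation $V_{C^\polar}\sim d-V_C$.
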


Proposition~\ref{prop:variance} leads to exact formulas for the variance of the intrinsic volume sequence in several interesting cases; Section~\ref{sec:examples} contains some worked examples.

\begin{proof}
By definition, the variance satisfies
$$
\Var[ V_C ] = \Expect\big[ V_C^2 \big] - \big(\Expect[ V_C ] \big)^2
	= \Expect\big[ V_C^2 \big] - \delta(C)^2.
$$
To obtain the expectation of $V_C^2$,
we invoke the master Steiner formula, Corollary~\ref{cor:general-steiner},
with the function $f(a,b) = a^2$ to obtain
$$
\Expect\big[ \enormsm{\Proj_C(\vct{g})}^4 \big]
	= \sum_{k=0}^d \Expect\big[ X_k^2 \big] \cdot v_k(C)
	= \sum_{k=0}^d k^2 v_k(C) + 2 \sum_{k=0}^d k \, v_k(C)
	= \Expect\big[ V_C^2 \big] + 2\delta(C). 
$$
Indeed, the raw second moment of a chi-square random variable $X_k$ with $k$ degrees of freedom equals $k^2 + 2k$.
Combine these two displays to reach
$$
\Var[ V_C ] = \Expect\big[ \enormsm{\Proj_C(\vct{g})}^4 \big] - \delta(C)^2 - 2 \delta(C)
	= \Expect\big[ \enormsm{\Proj_C(\vct{g})}^4 \big] - \big( \Expect\big[ \enormsm{\Proj_C(\vct{g})}^2 \big] \big)^2 - 2 \delta(C)
$$
where the second identity follows from Proposition~\ref{prop:sdim}.  Identify the variance of $\enormsm{\Proj_C(\vct{g})}^2$ to complete the proof of~\eqref{eqn:variance-1}.
To establish~\eqref{eqn:variance-2}, note that
$\Var[ V_C ] = \Var[ d - V_C ] = \Var[ V_{C^\polar} ]$,
and then apply~\eqref{eqn:variance-1}
to the random variable $V_{C^\polar}$.
\end{proof}

\subsection{A bound for the variance of the intrinsic volumes}
\label{sec:intvol-var-bd}

Proposition~\ref{prop:variance} also allows us to produce a general bound on the variance of the intrinsic volumes of a cone.

\begin{theorem}[Variance bound for intrinsic volumes] \label{thm:variance-bd}
Let $C \in \cC_d$ be a closed convex cone. Then
\begin{equation*} \label{eqn:variance-bd}
\Var[ V_C ] \leq 2 \, \big( \delta(C) \wedge \delta(C^\polar) \big).
\end{equation*}
The operator $\wedge$ returns the minimum of two numbers.
\end{theorem}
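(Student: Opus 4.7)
The plan is to combine the variance formula from Proposition~\ref{prop:variance} with the Gaussian Poincar\'e inequality applied to the function $F(\vct{x}) := \enormsm{\Proj_C(\vct{x})}^2$. Recall that the Gaussian Poincar\'e inequality states that $\Var[F(\vct{g})] \leq \Expect[\enormsm{\grad F(\vct{g})}^2]$ for any (locally Lipschitz) $F : \R^d \to \R$, with $\vct{g}$ a standard Gaussian. The function $F$ is indeed $1$-Lipschitz (since the metric projector onto a closed convex cone is nonexpansive and composed with the squaring map it is locally Lipschitz), so Poincar\'e applies.

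The key computation is that by the gradient identity~\eqref{eqn:grad-proj}, $\grad F(\vct{x}) = 2\,\Proj_C(\vct{x})$, hence $\enormsm{\grad F(\vct{x})}^2 = 4\enormsm{\Proj_C(\vct{x})}^2$. Taking expectations and using Proposition~\ref{prop:sdim},
$$
\Var\bigl[\enormsm{\Proj_C(\vct{g})}^2\bigr] \;\leq\; 4\,\Expect\bigl[\enormsm{\Proj_C(\vct{g})}^2\bigr] \;=\; 4\,\delta(C).
$$
Plugging this into the variance identity~\eqref{eqn:variance-1} of Proposition~\ref{prop:variance} yields
$$
\Var[V_C] \;=\; \Var\bigl[\enormsm{\Proj_C(\vct{g})}^2\bigr] - 2\delta(C) \;\leq\; 4\delta(C) - 2\delta(C) \;=\; 2\delta(C).
$$

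To obtain the bound with $\delta(C^\polar)$, I would repeat the identical argument with $C$ replaced by $C^\polar$, using the alternate form~\eqref{eqn:variance-2} of the variance (which says $\Var[V_C]=\Var[V_{C^\polar}] = \Var[\enormsm{\Proj_{C^\polar}(\vct{g})}^2] - 2\delta(C^\polar)$). This yields $\Var[V_C]\leq 2\delta(C^\polar)$, and taking the minimum of the two bounds completes the proof.

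The only mild subtlety is justifying Gaussian Poincar\'e for $F = \enormsm{\Proj_C(\cdot)}^2$: the map $\Proj_C$ is $1$-Lipschitz, so $F$ is locally Lipschitz with weak gradient given by~\eqref{eqn:grad-proj}, which is enough for the inequality to hold. Beyond that, the argument is essentially a one-line application of Poincar\'e to the right functional, made possible precisely because the quantity appearing inside the variance in Proposition~\ref{prop:variance} has a gradient whose squared norm is proportional to the functional itself.
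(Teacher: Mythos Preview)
Your proof is correct and essentially identical to the paper's: both apply the Gaussian Poincar\'e inequality to $H(\vct{x}) = \enormsm{\Proj_C(\vct{x})}^2$, use the gradient identity~\eqref{eqn:grad-proj} to obtain $\Var\big[\enormsm{\Proj_C(\vct{g})}^2\big] \leq 4\,\delta(C)$, substitute into Proposition~\ref{prop:variance}, and then repeat the argument for $C^\polar$ via~\eqref{eqn:variance-2}. One small wording slip: $F$ is locally Lipschitz but not $1$-Lipschitz (its gradient grows linearly); this does not affect the argument, since what Poincar\'e requires is that the weak gradient be square-integrable under the Gaussian measure, which holds here.
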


The example in Section~\ref{sec:circular} demonstrates that the constant two in~\eqref{eqn:variance-bd} cannot be reduced in general.

\begin{proof}
To bound the variance of $V_C$, we plan to invoke the Gaussian Poincar{\'e} inequality~\cite[Thm.~1.6.4]{Bog:98} to control the variance of $\enormsm{\Proj_C(\vct{g})}^2$.  This inequality states that
$$
\Var[ H(\vct{g}) ] \leq \Expect\big[ \enormsm{ \grad H(\vct{g}) }^2 \big]
$$
for any function $H : \R^d \to \R$ whose gradient is square-integrable with respect to the standard Gaussian measure.
We apply this result to the function
$$
H(\vct{x}) = \enormsm{\Proj_C(\vct{x})}^2
\qtq{with}
\enormsm{\grad H(\vct{x})}^2 = 4 \enormsm{ \Proj_C(\vct{x}) }^2.
$$
The gradient calculation is justified by~\eqref{eqn:grad-proj}.
We determine that
$$
\Var\big[ \enormsm{\Proj_C(\vct{g})}^2 \big]
	\leq 4 \Expect \big[ \enormsm{\Proj_C(\vct{g})}^2 \big]
	= 4 \delta(C)
$$
where the second identity follows from Proposition~\ref{prop:sdim}.  Introduce this inequality into~\eqref{eqn:variance-1} to see that $\Var[V_C] \leq 2\delta(C)$.
We can apply the same argument to see that
$$
\Var\big[ \enormsm{\Proj_{C^\polar}(\vct{g})}^2 \big] \leq 4 \delta(C^\polar).
$$
Substitute this bound into~\eqref{eqn:variance-2} to conclude that $\Var[V_C] \leq 2 \delta(C^\polar)$.
\end{proof}

In principle, a random variable taking values in $\{0,1,2, \dots, d\}$ can have variance larger than $d^2/3$---consider the uniform random variable.  In contrast, Theorem~\ref{thm:variance-bd} tells us that the variance of the intrinsic volume random variable $V_C$ cannot exceed $d$ for any cone $C$.  This observation has consequences for the tail behavior of $V_C$.  Indeed, Chebyshev's inequality implies that
\begin{equation*} \label{eqn:chebyshev}
\Prob\bigg\{ \abs{ V_C - \delta(C) } > \lambda \sqrt{\delta(C)} \bigg\}
	\leq \frac{\Var[V_C]}{\lambda^2 \delta(C)}
	\leq \frac{2}{\lambda^2}.
\end{equation*}
That is, most of the mass of $V_C$ is located near the statistical dimension.

\subsection{Exponential moments of the intrinsic volumes}
\label{sec:intvol-exp}

In the previous section, we discovered that the intrinsic volume random variable $V_C$ is often close to its mean value.  This observation suggests that $V_C$ might exhibit stronger concentration.  A standard method for proving concentration inequalities for a random variable is to calculate its exponential moments.  The master Steiner formula allows us to accomplish this task.

\begin{proposition}[Exponential moments of the intrinsic volumes] \label{prop:exp-mom}
Let $C \in \cC_d$ be a closed convex cone.  For each parameter $\zeta \in \R$,
$$
\Expect{} \econst^{\zeta V_C}
	= \Expect{} \econst^{\xi \, \enormsm{\Proj_C(\vct{g})}^2}
	\qtq{where}
	\xi = \half \big(1 - \econst^{-2\zeta}\big).
$$
\end{proposition}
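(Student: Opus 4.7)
The plan is to apply the master Steiner formula (Corollary~\ref{cor:general-steiner}) with the one-sided exponential $f(a,b) = e^{\xi a}$ and then match the resulting series against the moment generating function of $V_C$ by choosing $\xi$ as a function of $\zeta$.

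First I would write out the left-hand side directly from the definition of $V_C$:
\[
\Expect \econst^{\zeta V_C} = \sum_{k=0}^d \econst^{\zeta k} \, v_k(C).
\]
Next, with $f(a,b) = \econst^{\xi a}$, Corollary~\ref{cor:general-steiner} gives
\[
\Expect \econst^{\xi \, \enormsm{\Proj_C(\vct{g})}^2}
 = \sum_{k=0}^d \Expect\bigl[\econst^{\xi X_k}\bigr] \cdot v_k(C),
\]
where $X_k$ is chi-square with $k$ degrees of freedom. The moment generating function of $X_k$ is the standard $\Expect \econst^{\xi X_k} = (1-2\xi)^{-k/2}$, valid for $\xi < 1/2$ (with the convention $X_0 \equiv 0$ handling the $k=0$ term trivially). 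Equating the coefficient of $v_k(C)$ in the two expansions reduces the claim to the single scalar identity $\econst^{\zeta} = (1-2\xi)^{-1/2}$, i.e., $1 - 2\xi = \econst^{-2\zeta}$, which is exactly the stated relation $\xi = \tfrac{1}{2}(1 - \econst^{-2\zeta})$.

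To finish I would verify that the choice $\xi = \tfrac{1}{2}(1 - \econst^{-2\zeta})$ indeed satisfies $\xi < 1/2$ for every $\zeta \in \R$, so that the chi-square MGFs (and hence the expectations in~\eqref{eq:simple-steiner}) are finite, legitimizing the use of Corollary~\ref{cor:general-steiner}. For $\zeta \geq 0$ we have $\xi \in [0,1/2)$, and for $\zeta < 0$ we have $\xi < 0$, so the integrand is bounded by $1$ and all expectations are manifestly finite.

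The argument is essentially a moment-generating-function matching, so there is no serious obstacle; the only point requiring a brief check is the applicability of the master Steiner formula, which hinges on finiteness of the $\phi_f(L_k)$, and that follows from the elementary MGF bound above. The elegant reparametrization $\xi = \tfrac{1}{2}(1-\econst^{-2\zeta})$ is simply what makes $(1-2\xi)^{-1/2}$ collapse to $\econst^{\zeta}$, converting the chi-square MGF of $\|\Proj_C(\vct{g})\|^2$ into the MGF of the integer-valued variable $V_C$.
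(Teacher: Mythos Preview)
Your proposal is correct and follows essentially the same approach as the paper: apply Corollary~\ref{cor:general-steiner} with $f(a,b)=\econst^{\xi a}$, insert the chi-square moment generating function $(1-2\xi)^{-k/2}$, and then make the change of variables $\xi=\tfrac12(1-\econst^{-2\zeta})$ to identify the resulting sum with $\Expect\,\econst^{\zeta V_C}$. Your added remark that this $\xi$ always satisfies $\xi<\tfrac12$, so that all expectations in~\eqref{eq:simple-steiner} are finite, is exactly the integrability check the paper handles by noting that $\zeta\mapsto\xi$ is a bijection onto $(-\infty,\tfrac12)$.
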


\begin{proof}
Fix a number $\xi < \half$.  With the choice $f(a,b) = \econst^{\xi a}$, Corollary~\ref{cor:general-steiner} shows that
$$
\Expect{} \econst^{\xi \, \enormsm{\Proj_C(\vct{g})}^2}
	= \sum_{k=0}^d \Expect{}\big[ \econst^{\xi X_k} \big] \cdot v_k(C)
	= \sum_{k=0}^d (1 - 2\xi)^{-k/2} v_k(C)
	= \sum_{k=0}^d \econst^{\zeta k} v_k(C)
	= \Expect{} \econst^{\zeta V_C}.
$$
We have used the familiar formula for the exponential moments of a chi-square random variable $X_k$ with $k$ degrees of freedom.  The penultimate identity follows from the change of variables $\zeta = - \half \log(1 - 2\xi)$, which establishes a bijection between $\xi < \half$ and $\zeta \in \R$.
\end{proof}

\begin{remark}[Conic Wills functional]
Proposition~\ref{prop:exp-mom} leads to a geometric description of the generating function of the intrinsic volumes:
\begin{equation} \label{eqn:wills}
W_C(\lambda)
	:= \lambda^d \Expect \exp\left( \frac{1-\lambda^2}{2} \cdot\dist^2(\vct{g}, C) \right)
	= \sum_{k=0}^d \lambda^k v_k(C)
	\quad\text{for $\lambda > 0$.}
\end{equation}
To see why this is true, use the representation~\eqref{eqn:dist-proj} of the distance, and apply Proposition~\ref{prop:exp-mom} with
$\zeta = -\log \lambda$ to confirm that
$$
W_C(\lambda) =
	\lambda^d \Expect \exp\left( \frac{1 - \lambda^2}{2} \cdot \enormsm{\Proj_{C^\polar}(\vct{g})}^2 \right)
	= \lambda^d \sum_{k=0}^d \lambda^{-k} v_k(C^\polar)
	= \sum_{k=0}^d \lambda^{k} v_k(C).
$$
We have applied the fact that polarity reverses intrinsic volumes to reindex the sum.  The function $W_C$ can be viewed as a conic analog of the Wills functional~\cite{Wil:73} from Euclidean geometry.
\end{remark}

\subsection{A bound for the exponential moments of the intrinsic volumes}
\label{sec:intvol-exp-bd}

Proposition~\ref{prop:exp-mom} allows us to obtain an excellent bound for the exponential moments of $V_C$.  In the next section, we use this result to develop concentration inequalities for the intrinsic volumes.

\begin{theorem}[Exponential moment bound for intrinsic volumes] \label{thm:exp-bd}
Let $C \in \cC_d$ be a closed convex cone.  For each parameter $\zeta \in \R$,
\begin{align}
\Expect \econst^{\zeta (V_C - \delta(C))}
	&\leq \exp\left( \frac{\econst^{2\zeta} - 2 \zeta - 1}{2} \cdot \delta(C) \right)
	\label{eqn:exp-bd-1}, \quad\text{and} \\
\Expect \econst^{\zeta (V_C - \delta(C))}
	&\leq \exp\left( \frac{\econst^{-2\zeta} + 2\zeta - 1}{2} \cdot \delta(C^\polar) \right).
	\label{eqn:exp-bd-2}
\end{align}
\end{theorem}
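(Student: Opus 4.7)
The plan is to use Proposition~\ref{prop:exp-mom} to rewrite the exponential moment of $V_C$ as a Laplace transform of $h(\vct{g}) \defeq \enormsm{\Proj_C(\vct{g})}^2$, and then to control this Laplace transform using the Gaussian logarithmic Sobolev inequality. The essential structural input is the self-reproducing identity
$$
\enorm{\grad h(\vct{x})}^2 = 4\, h(\vct{x}),
$$
which follows immediately from~\eqref{eqn:grad-proj}. This identity says that, from the standpoint of Gaussian concentration, the variable $h(\vct{g})$ behaves like a chi-square with $\delta(C)$ degrees of freedom.

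To prove~\eqref{eqn:exp-bd-1}, I would reduce via Proposition~\ref{prop:exp-mom} to showing that
$$
L(\xi) \defeq \log \Expect \econst^{\xi h(\vct{g})} \;\leq\; \frac{\xi\, \delta(C)}{1 - 2\xi} \eqdef \delta(C)\, P(\xi) \qtq{for every} \xi < \tfrac{1}{2}.
$$
Under the substitution $\xi = \half(1 - \econst^{-2\zeta})$ one has $P(\xi) = \half(\econst^{2\zeta} - 1)$, and recentering by $\zeta\, \delta(C)$ produces the right-hand side of~\eqref{eqn:exp-bd-1}. I would obtain the bound on $L$ by applying the Gaussian log-Sobolev inequality to $F = \xi h$: since $\enorm{\grad F}^2 = 4\xi^2 h = 4 \xi F$, the entropy bound collapses to the differential inequality
$$
\xi\, (1 - 2\xi)\, L'(\xi) \;\leq\; L(\xi).
$$

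To integrate this, observe that $P$ satisfies the corresponding equality $\xi(1-2\xi)\, P'(\xi) = P(\xi)$. Hence the ratio $N(\xi) \defeq L(\xi)/P(\xi)$ has nonpositive derivative on each of the intervals $(-\infty, 0)$ and $(0, \tfrac{1}{2})$, and a first-order Taylor expansion at the origin gives $\lim_{\xi \to 0} N(\xi) = L'(0)/P'(0) = \delta(C)$, with $L'(0) = \delta(C)$ coming from Proposition~\ref{prop:sdim}. Monotonicity of $N$ combined with the sign of $P$ on each interval then forces $L(\xi) \leq \delta(C)\, P(\xi)$, establishing~\eqref{eqn:exp-bd-1}. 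The companion bound~\eqref{eqn:exp-bd-2} follows by applying~\eqref{eqn:exp-bd-1} to the polar cone $C^\polar$ at parameter $-\zeta$, together with the reversal $V_{C^\polar} \sim d - V_C$ and the totality relation $\delta(C) + \delta(C^\polar) = d$; a short algebraic manipulation converts the bound for $V_{C^\polar}$ into the advertised bound for $V_C$.

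The principal obstacle is the ODE analysis: the differential inequality is singular at $\xi = 0$ and the coefficient $\xi(1-2\xi)$ changes sign there, so one must treat the regions $\xi > 0$ and $\xi < 0$ separately and glue them across the origin using the shared initial slope $L'(0) = P'(0)\, \delta(C) = \delta(C)$.
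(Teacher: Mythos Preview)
Your proposal is correct and follows essentially the same route as the paper: reduce to a Laplace transform of $\enormsm{\Proj_C(\vct{g})}^2$ via Proposition~\ref{prop:exp-mom}, apply the Gaussian logarithmic Sobolev inequality together with the gradient identity~\eqref{eqn:grad-proj} to obtain a first-order differential inequality, integrate it in the Herbst style, and then derive~\eqref{eqn:exp-bd-2} from~\eqref{eqn:exp-bd-1} by polarity. The only cosmetic difference is that the paper centers by $\delta(C)$ before applying log-Sobolev and integrates by recognizing exact derivatives of $\xi^{-1}\log m(\xi)$, whereas you work with the uncentered cumulant generating function and integrate via a ratio comparison with the extremal solution $P(\xi)=\xi/(1-2\xi)$; both devices yield the identical bound $\log \Expect\,\econst^{\xi\,\enormsm{\Proj_C(\vct{g})}^2}\leq \delta(C)\,\xi/(1-2\xi)$.
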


The major technical challenge is to bound the exponential moments of the random variable $\enormsm{\Proj_C(\vct{g})}^2$.  The following lemma provides a sharp
estimate for the exponential moments.  It improves on an earlier
result~\cite[Sublem.~D.3]{ALMT:13}.

\begin{lemma} \label{lem:exp-bd}
Let $C \in \cC_d$ be a closed convex cone.  For each parameter $\xi < \half$,
$$
\Expect \econst^{\xi \, ( \enormsm{\Proj_C(\vct{g})}^2 - \delta(C) )}
	\leq \exp\left( \frac{2\xi^2 \delta(C)}{1 - 2\xi} \right).
$$
\end{lemma}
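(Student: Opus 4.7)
The plan is to bound the exponential moments of $F^2 := \enormsm{\Proj_C(\vct{g})}^2$ by means of the Gaussian logarithmic Sobolev inequality, converting the claim into a first-order differential inequality for the cumulant generating function and then integrating it by comparison with a matched ODE.

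Introduce $L(\xi) := \Expect \econst^{\xi F^2}$ and $K(\xi) := \log L(\xi)$ for $\xi < \half$. Differentiating under the expectation gives $L'(\xi) = \Expect[F^2 \econst^{\xi F^2}]$, and Proposition~\ref{prop:sdim} supplies the initial data $K(0) = 0$ and $K'(0) = \Expect F^2 = \delta(C)$. Equation~\eqref{eqn:grad-proj} furnishes $\grad F^2 = 2\Proj_C$, so the function $G := \exp(\xi F^2/2)$ has $\enormsm{\grad G}^2 = \xi^2 F^2 G^2$. Applying the Gaussian log-Sobolev inequality $\mathrm{Ent}(G^2) \leq 2 \Expect\enormsm{\grad G}^2$ produces
$$
\xi L'(\xi) - L(\xi) K(\xi) \leq 2\xi^2 L'(\xi),
$$
which, after dividing by $L(\xi) > 0$, rearranges to the differential inequality $\xi(1-2\xi) K'(\xi) \leq K(\xi)$ for $\xi < \half$.

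Next I would integrate by comparing with the explicit solution $M(\xi) := \xi \delta(C)/(1-2\xi)$ of the matched ODE $\xi(1-2\xi) M'(\xi) = M(\xi)$, noting that the target bound is equivalent to $K(\xi) \leq M(\xi)$ since $M(\xi) - \xi \delta(C) = 2\xi^2\delta(C)/(1-2\xi)$. Introduce the auxiliary ratio $\Psi(\xi) := (1-2\xi)K(\xi)/\xi$, extended by continuity to $\Psi(0) := K'(0) = \delta(C)$. A direct computation gives
$$
\Psi'(\xi) = \frac{\xi(1-2\xi)K'(\xi) - K(\xi)}{\xi^2} \leq 0,
$$
so $\Psi$ is nonincreasing on $(-\infty, \half)$. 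For $\xi > 0$, monotonicity yields $\Psi(\xi) \leq \delta(C)$, and multiplying by the positive factor $\xi/(1-2\xi)$ gives $K(\xi) \leq M(\xi)$. For $\xi < 0$, monotonicity instead gives $\Psi(\xi) \geq \delta(C)$, but multiplication by the now negative factor $\xi/(1-2\xi)$ flips the inequality, again yielding $K(\xi) \leq M(\xi)$. Exponentiating and subtracting $\xi\delta(C)$ on both sides of $K(\xi) \leq M(\xi)$ delivers the stated bound.

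The principal obstacle is the singularity of the differential inequality at $\xi = 0$, where both $K$ and $M$ vanish and a naive Gr\"onwall argument is unavailable. The device of passing to the ratio $\Psi$ is what cleans this up: the matched initial slopes $K'(0) = M'(0) = \delta(C)$ make $\Psi$ continuous at the origin with value $\delta(C)$, so the delicate two-sided comparison at a degenerate point collapses into a single monotonicity statement valid for both signs of $\xi$ simultaneously.
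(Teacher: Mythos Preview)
Your proof is correct and follows essentially the same Herbst-type argument as the paper: apply the Gaussian log-Sobolev inequality to obtain a first-order differential inequality for the cumulant generating function, then integrate across the singularity at $\xi=0$. The only cosmetic difference is that the paper centers the variable first (working with $Z=\enormsm{\Proj_C(\vct{g})}^2-\delta(C)$) and integrates by recognizing both sides as exact derivatives of $\xi^{-1}\log m(\xi)$ and $2\log m(\xi)+2\delta(C)\xi$, whereas you leave it uncentered and integrate by showing the ratio $\Psi(\xi)=(1-2\xi)K(\xi)/\xi$ is monotone; unwinding either computation yields the same inequality $(1-2\xi)K(\xi)/\xi\leq\delta(C)$.
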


\begin{proof}
Define the zero-mean random variable
$$
Z := \enormsm{\Proj_C(\vct{g})}^2 - \delta(C).
$$
Introduce the moment generating function $m(\xi) := \Expect{} \econst^{\xi Z}$.  Our aim is to bound $m(\xi)$.  Before we begin, it is helpful to note a few properties of the moment generating function.  First, the derivative $m'(\xi) = \Expect\big[ Z\econst^{\xi Z} \big]$ whenever $\xi < \half$.  By direct calculation, $\log m(0) = 0$.  Furthermore, l'H{\^ o}pital's rule shows that $\lim_{\xi \to 0} \xi^{-1} \log m(\xi) = 0$ because the random variable $Z$ has zero mean.

The argument is based on the Gaussian logarithmic Sobolev inequality~\cite[Thm.~1.6.1]{Bog:98}.  One version of this result states that
\begin{equation} \label{eqn:log-sob}
\Expect \left[ H(\vct{g} ) \cdot \econst^{H(\vct{g})} \right]
	- \Expect\left[ \econst^{H(\vct{g})} \right] \log{} \Expect\left[  \econst^{H(\vct{g})} \right]
	\leq \frac{1}{2} \Expect \left[ \enormsm{\grad H(\vct{g})}^2 \cdot \econst^{H(\vct{g})} \right]
\end{equation}
for any differentiable function $H : \R^d \to \R$ such that the expectations in~\eqref{eqn:log-sob} are finite.  We apply this result to the function
$$
H(\vct{x}) = \xi \, \left[ \enormsm{\Proj_C(\vct{x})}^2 - \delta(C) \right]
\qtq{with}
\enormsm{\grad H(\vct{x})}^2 = 4 \xi^2 \enormsm{\Proj_C(\vct{x})}^2.
$$
The gradient calculation is justified by~\eqref{eqn:grad-proj}.  Notice that
$$
H(\vct{g}) = \xi Z
\qtq{and}
\enormsm{\grad H(\vct{g})}^2 = 4\xi^2(Z + \delta(C)).
$$
Therefore, the logarithmic Sobolev inequality~\eqref{eqn:log-sob} delivers the relation
$$
\xi \cdot \Expect \big[ Z \econst^{\xi Z} \big] - \Expect\big[ \econst^{\xi Z} \big]
	\log{} \Expect\big[ \econst^{\xi Z} \big]
	\leq 2 \xi^2 \cdot \Expect\big[ Z\econst^{\xi Z} \big] + 2\xi^2 \delta(C) \cdot \Expect \big[ \econst^{\xi Z} \big]
	\quad\text{for $\xi < \half$.}
$$
We can rewrite the last display as a differential inequality for the moment generating function:
\begin{equation} \label{eqn:diff-ineq-0}
\xi m'(\xi) - m(\xi) \log m(\xi) \leq 2\xi^2 m'(\xi) + 2\delta(C) \cdot \xi^2 m(\xi)
\quad\text{for $\xi < \half$.}
\end{equation}
The requirement on $\xi$ is necessary and sufficient to ensure that $m(\xi)$ and $m'(\xi)$ are finite.  To complete the proof, we just need to solve this differential inequality.

We follow the argument from~\cite[Thm.~5]{BLM03:Concentration-Inequalities}.  Divide the inequality~\eqref{eqn:diff-ineq-0} by the positive number $\xi^2 m(\xi)$ to reach
$$
\frac{1}{\xi} \cdot \frac{m'(\xi)}{m(\xi)} - \frac{1}{\xi^2} \log m(\xi)
	\leq 2 \cdot \frac{m'(\xi)}{m(\xi)} + 2\delta(C)
	\quad\text{for $\xi \in (-\infty, 0) \cup \big(0, \half \big)$.}
$$
The left- and right-hand sides of this relation are exactly integrable:
\begin{equation} \label{eqn:diff-ineq}
\frac{\diff{}}{\diff{s}} \left[ \frac{1}{s} \log m(s) \right]
	\leq 2 \cdot \frac{\diff{}}{\diff{s}} \left[ \log m(s) + 2\delta(C) \cdot s\right]
	\quad\text{for $s \in (-\infty, 0) \cup \big(0, \half \big)$.}
\end{equation}
To continue, we first consider the case $0 < \xi < \half$.  Integrate the inequality~\eqref{eqn:diff-ineq} over the interval $s \in [0, \xi]$ using the boundary conditions $\log m(0) = 0$ and $\lim_{\xi \to 0} \xi^{-1} \log m(\xi) = 0$.  This step yields
$$
\frac{1}{\xi} \log m(\xi) \leq 2 \log m(\xi) + 2\delta(C) \cdot \xi
	\quad\text{for $0 < \xi < \half$.}
$$
Solve this relation for the moment generating function $m$ to obtain the bound
\begin{equation} \label{eqn:mgf-bd}
m(\xi) \leq \exp\left( \frac{2\xi^2 \delta(C)}{1 - 2\xi} \right)
\quad\text{for $0 \leq \xi < \half$.}
\end{equation}
The boundary case $\xi = 0$ follows from a direct calculation.  Next, we address the situation where $\xi < 0$.  Integrating~\eqref{eqn:diff-ineq} over the interval $[\xi, 0]$, we find that
$$
- \frac{1}{\xi} \log m(\xi)  \leq - 2 \log m(\xi) - 2 \delta(C) \cdot \xi
	\quad\text{for $\xi < 0$.}
$$
Solve this inequality for $m(\xi)$ to see that the bound~\eqref{eqn:mgf-bd} also holds in the range $\xi < 0$.  This observation completes the proof. 
\end{proof}

With Lemma~\ref{lem:exp-bd} at hand, we quickly reach Theorem~\ref{thm:exp-bd}.

\begin{proof}[Proof of Theorem~\ref{thm:exp-bd}]
We begin with the statement from Proposition~\ref{prop:exp-mom}.  Adding and subtracting multiples of $\delta(C)$ in the exponent, we obtain the relation
$$
\Expect{} \econst^{\zeta (V_C - \delta(C))} =
	\econst^{(\xi - \zeta) \, \delta(C)} \cdot
	\Expect{} \econst^{\xi \, (\enormsm{\Proj_C(\vct{g})}^2 - \delta(C))}.
$$
where $\xi = \half \big(1 - \econst^{-2\zeta} \big) < \half$.  Lemma~\ref{lem:exp-bd} controls the moment generating function on the right-hand side:
$$
\Expect{} \econst^{\zeta (V_C - \delta(C))} \leq
	\econst^{(\xi - \zeta) \, \delta(C)} \cdot
	\exp\left( \frac{2\xi^2 \delta(C)}{1 - 2\xi} \right).
$$
By a marvelous coincidence, the terms in the exponent collapse into a compact form:
$$
\xi - \zeta + \frac{2\xi^2}{1 - 2\xi} = \frac{\econst^{2 \zeta} - 2 \zeta -1 }{2}.
$$
Combine the last two displays to finish the proof of~\eqref{eqn:exp-bd-1}.  To obtain the second formula~\eqref{eqn:exp-bd-2}, note that
$$
\Expect{} \econst^{\zeta (V_C - \delta(C))}
	= \Expect{} \econst^{(-\zeta) (V_{C^\polar} - \delta(C^\polar))}
$$
because $V_{C} \sim d - V_{C^\polar}$ and $\delta(C) = d - \delta(C^\polar)$.  Now apply~\eqref{eqn:exp-bd-1} to the right-hand side. 
\end{proof}

\subsection{Concentration of intrinsic volumes}
\label{sec:intvol-conc}

The exponential moment bound from Theorem~\ref{thm:exp-bd} allows us to obtain concentration results for the sequence of intrinsic volumes of a convex cone.  The following corollary provides Bennett-type inequalities for the intrinsic volume random variable.  
\begin{corollary}[Concentration of the intrinsic volume random variable] \label{cor:VC-conc}
Let $C \in \cC_d$ be a closed convex cone.  For each $\lambda \geq 0$,
the intrinsic volume random variable $V_C$ satisfies the upper tail bound
\begin{align}
\Prob\big\{ V_C - \delta(C) \geq \lambda \big\}
	&\leq \exp\left( -  \frac{1}{2} \max\left\{ \delta(C) \cdot \psi\left(\frac{\lambda}{\delta(C)} \right),\ 
	\delta(C^\polar) \cdot \psi\left(\frac{-\lambda}{\delta(C^\polar)}\right) \right\} \right) \label{eq:upper-tail} \\
\intertext{and the lower tail bound}
\Prob\big\{ V_C - \delta(C) \leq - \lambda \big\}
	&\leq \exp\left( - \frac{1}{2} \max\left\{ \delta(C) \cdot \psi\left(\frac{-\lambda}{\delta(C)} \right),\ 
	\delta(C^\polar) \cdot \psi\left(\frac{\lambda}{\delta(C^\polar)}\right) \right\} \right). \label{eq:lower-tail}
\end{align}
The function $\psi(u) := (1+u)\log(1+u) - u$ for $u \geq - 1$ while $\psi(u) = \infty$ for $u < - 1$.
\end{corollary}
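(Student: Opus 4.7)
The plan is to apply the Chernoff--Cram\'er method to each of the two exponential moment bounds in Theorem~\ref{thm:exp-bd}, and to retain whichever one yields the sharper estimate. For the upper tail and any $\zeta \geq 0$, Markov's inequality gives
\begin{equation*}
\Prob\{V_C - \delta(C) \geq \lambda\} \leq \econst^{-\zeta\lambda}\,\Expect \econst^{\zeta(V_C - \delta(C))}.
\end{equation*}
Inserting~\eqref{eqn:exp-bd-1} produces the exponent $E_1(\zeta) := -\zeta\lambda + \tfrac{1}{2}(\econst^{2\zeta} - 2\zeta - 1)\,\delta(C)$, and inserting~\eqref{eqn:exp-bd-2} produces $E_2(\zeta) := -\zeta\lambda + \tfrac{1}{2}(\econst^{-2\zeta} + 2\zeta - 1)\,\delta(C^\polar)$. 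For the lower tail I would run the same argument with parameter $-\zeta$ in place of $\zeta$.

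The main step is the minimization of each exponent, a routine Fenchel--Legendre calculation. Differentiating $E_1$ yields the stationary condition $\econst^{2\zeta^\star} = 1 + \lambda/\delta(C)$; substituting back and abbreviating $u := \lambda/\delta(C)$,
\begin{equation*}
\inf_{\zeta \geq 0} E_1(\zeta) = -\tfrac{1}{2}\,\delta(C)\bigl[(1+u)\log(1+u) - u\bigr] = -\tfrac{1}{2}\,\delta(C)\,\psi(u).
\end{equation*}
The minimization of $E_2$ is structurally identical, with optimizer $\zeta^\star = -\tfrac{1}{2}\log(1 - \lambda/\delta(C^\polar))$, and delivers $-\tfrac{1}{2}\,\delta(C^\polar)\,\psi(-\lambda/\delta(C^\polar))$. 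Feeding both estimates into Markov and keeping the smaller of the two probability bounds---equivalently, taking the maximum inside the $\exp(-(\cdot)/2)$---produces~\eqref{eq:upper-tail}. The lower tail~\eqref{eq:lower-tail} follows by the same procedure with parameter $-\zeta$, or more cheaply by invoking the identity $V_C - \delta(C) \sim -(V_{C^\polar} - \delta(C^\polar))$ and quoting the upper-tail bound for $C^\polar$, which swaps the roles of $\delta(C)$ and $\delta(C^\polar)$.

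The one subtlety to flag is the regime where the naive Chernoff optimizer would demand the logarithm of a nonpositive number, e.g.\ $E_2$ when $\lambda \geq \delta(C^\polar)$. In that range the coefficient of $\zeta$ appearing in $E_2$ is nonpositive, so sending $\zeta \to \infty$ drives the exponent to $-\infty$ and the Markov bound to zero, in agreement with the convention $\psi(u) = \infty$ for $u < -1$. I anticipate no real obstacle beyond this bookkeeping; the only satisfying moment is the algebraic cancellation that condenses the optimized Chernoff exponent into the clean expression $\tfrac{1}{2}\psi$.
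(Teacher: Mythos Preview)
Your proposal is correct and follows exactly the same approach as the paper: apply the Laplace transform (Chernoff) method, insert each of the two exponential moment bounds from Theorem~\ref{thm:exp-bd}, minimize over $\zeta$, and retain the sharper of the two resulting estimates. The paper's proof is terser---it simply says ``minimize the right-hand side over $\zeta > 0$'' and refers to~\cite[Sec.~2.7]{BLM13:Concentration-Inequalities}---whereas you carry out the optimization explicitly and handle the boundary regime $\lambda \geq \delta(C^\polar)$; your extra detail is sound (with the minor caveat that at $\lambda = \delta(C^\polar)$ the infimum of $E_2$ is $-\tfrac{1}{2}\delta(C^\polar)$ rather than $-\infty$, consistent with $\psi(-1)=1$).
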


\begin{proof} The argument, based on the Laplace transform method, is standard.  For any $\zeta > 0$,
$$
\Prob\big\{ V_C - \delta(C) \geq \lambda \big\}
	\leq \econst^{-\zeta \lambda} \cdot \Expect{} \econst^{\zeta \, (V_C - \delta(C))}
	\leq \econst^{-\zeta \lambda} \cdot \exp\left( \frac{\econst^{2\zeta} - 2\zeta - 1}{2} \cdot \delta(C) \right)
$$
where we have applied the exponential moment bound~\eqref{eqn:exp-bd-1} from Theorem~\ref{thm:exp-bd}.  Minimize the right-hand side over $\zeta > 0$ to obtain the first branch of the maximum in~\eqref{eq:upper-tail}.  The second exponential moment bound~\eqref{eqn:exp-bd-2} leads to the second branch of the maximum in~\eqref{eq:upper-tail}.  The lower tail bound~\eqref{eq:lower-tail} follows from the same considerations.  For more details about this type of proof, see~\cite[Sec.~2.7]{BLM13:Concentration-Inequalities}.
\end{proof}

To understand the content of Corollary~\ref{cor:VC-conc}, it helps to make some further estimates.  Comparing Taylor series, we find that $\psi(u) \geq u^2/(2 + 2u/3)$.  This observation leads to a weaker form of the tail bounds~\eqref{eq:upper-tail} and~\eqref{eq:lower-tail}.  For $\lambda \geq 0$,
\begin{align*}
\Prob\big\{ V_C - \delta(C) \geq \lambda \big\}
	&\leq \exp\left( \frac{-\lambda^2/4}{(\delta(C) + \lambda / 3) \wedge
	(\delta(C^\polar) - \lambda / 3)} \right) \\
\Prob\big\{ V_C - \delta(C) \leq -\lambda \big\}
	&\leq \exp\left( \frac{-\lambda^2/4}{(\delta(C) - \lambda / 3) \wedge
	(\delta(C^\polar) + \lambda / 3)} \right).
\end{align*}
This pair of inequalities reflects the fact that the left tail of $V_C$ exhibits faster decay than the right tail when the statistical dimension $\delta(C)$ is small; the tail behavior is reversed when $\delta(C)$ is close to the ambient dimension.  For practical purposes, it seems better to combine these estimates into a single bound:
\begin{equation} \label{eqn:tail-combined}
\Prob\big\{ \abs{V_C - \delta(C)} \geq \lambda \big\}
	\leq 2 \exp \left( \frac{-\lambda^2/4}{(\delta(C) \wedge \delta(C^\polar))
	+ \lambda / 3} \right) \quad\text{for $\lambda \geq 0$.}
\end{equation}
This tail bound indicates that $V_C$ looks somewhat like a Gaussian variable with
mean $\delta(C)$ and variance $2 \, (\delta(C) \wedge \delta(C^\polar))$ or less.
This claim is consistent with Theorem~\ref{thm:variance-bd}.
The result~\eqref{eqn:tail-combined} improves
over~\cite[Thm.~6.1]{ALMT:13}, and we will
see an example in Section~\ref{sec:circular} that saturates the bound.

Our analysis suggests that the intrinsic volume sequence of a convex cone $C$
cannot exhibit very complicated behavior.  Indeed, the statistical dimension
$\delta(C)$ already tells us almost everything there is to know.  The only
large intrinsic volumes $v_k(C)$ are those where the index
$k$ is in the range $\delta(C) \pm {\rm const} \cdot
\sqrt{\delta(C) \wedge \delta(C^\polar)}$.
The consequence of this result for conic integral geometry
is that a cone with statistical dimension $\delta(C)$
behaves essentially like a subspace with approximate dimension $\delta(C)$.
See~\cite{ALMT:13} for more support of this point and its
consequences for convex optimization.

\section{Intrinsic volumes of product cones}
\label{sec:intvol-product}

Suppose that $C_1 \in \cC_{d_1}$ and $C_2 \in \cC_{d_2}$ are closed convex
cones.  We can form another closed convex cone by taking their direct product:
$$
C_1 \times C_2 := \big\{ (\vct{x}_1, \vct{x}_2) \in \R^{d_1+d_2} :
	\vct{x}_1 \in C_1 \text{ and } \vct{x}_2 \in C_2 \big\}
	\in \cC_{d_1+d_2}.
$$
The probabilistic methods of the last section are well suited to the analysis
of a product cone.  In this section, we compute the intrinsic
volumes of a product cone using these techniques.  Then we identify the mean,
variance, and concentration behavior of the intrinsic volume random variable
of a product cone.

\subsection{The product rule for intrinsic volumes}
\label{sec:product}

The intrinsic volumes of the product cone can be derived from the intrinsic
volumes of the two factors.

\begin{corollary}[Product rule for intrinsic volumes] \label{cor:product-rule}
Let $C_1 \in \cC_{d_1}$ and $C_2 \in \cC_{d_2}$ be closed convex cones.
The intrinsic volumes of the product cone $C_1 \times C_2$ satisfy
\begin{equation} \label{eqn:product}
v_k(C_1 \times C_2) = \sum_{i+j = k} v_i(C_1) \cdot v_j(C_2)
\quad\text{for $k = 0, 1, 2, \dots, d_1 + d_2$.}
\end{equation}
\end{corollary}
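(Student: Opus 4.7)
The plan is to reduce the product rule to the distributional identity $V_{C_1 \times C_2} \sim V_{C_1}' + V_{C_2}'$ where $V_{C_1}'$ and $V_{C_2}'$ are independent copies of the intrinsic volume random variables of the two factors. The conversion device is Proposition~\ref{prop:exp-mom}, which rewrites the moment generating function of $V_C$ as a Gaussian expectation involving $\enormsm{\Proj_C(\vct{g})}^2$.

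First I would record two structural facts about the product cone. The metric projector factors coordinatewise,
$$
\Proj_{C_1 \times C_2}(\vct{x}_1, \vct{x}_2) = \bigl(\Proj_{C_1}(\vct{x}_1),\ \Proj_{C_2}(\vct{x}_2)\bigr),
$$
as follows immediately from~\eqref{eqn:metric-proj}, since the squared distance to $C_1 \times C_2$ decouples across the two coordinate blocks. Combined with the marginal property of the standard Gaussian, decomposing $\vct{g} = (\vct{g}_1, \vct{g}_2) \in \R^{d_1} \times \R^{d_2}$ into two independent standard Gaussian vectors yields
$$
\enormsm{\Proj_{C_1 \times C_2}(\vct{g})}^2 = \enormsm{\Proj_{C_1}(\vct{g}_1)}^2 + \enormsm{\Proj_{C_2}(\vct{g}_2)}^2,
$$
a sum of two \emph{independent} random variables.

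Next I would apply Proposition~\ref{prop:exp-mom} on both ends. For each $\zeta \in \R$, set $\xi = \half(1 - \econst^{-2\zeta})$ as prescribed by that proposition. Then
$$
\Expect \econst^{\zeta V_{C_1 \times C_2}}
= \Expect \econst^{\xi\,\enormsm{\Proj_{C_1 \times C_2}(\vct{g})}^2}
= \Expect \econst^{\xi\,\enormsm{\Proj_{C_1}(\vct{g}_1)}^2} \cdot \Expect \econst^{\xi\,\enormsm{\Proj_{C_2}(\vct{g}_2)}^2}
= \Expect \econst^{\zeta V_{C_1}} \cdot \Expect \econst^{\zeta V_{C_2}},
$$
using independence in the middle equality and Proposition~\ref{prop:exp-mom} once more at the end. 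Since the moment generating functions agree for every real $\zeta$ and all variables are supported on finite subsets of $\N$, the distribution of $V_{C_1 \times C_2}$ coincides with that of $V_{C_1}' + V_{C_2}'$ for independent copies. Reading off the resulting convolution of mass functions produces~\eqref{eqn:product}.

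I do not expect any real obstacle; the only moving part warranting care is the factorization of $\Proj_{C_1 \times C_2}$, which is elementary. A slightly different route would be to plug $f(a,b) = \econst^{\xi a}$ directly into Corollary~\ref{cor:general-steiner} and match coefficients in the formal variable $(1-2\xi)^{-1/2}$, but routing through the MGF via Proposition~\ref{prop:exp-mom} is cleaner and avoids any appeal to polynomial identity arguments.
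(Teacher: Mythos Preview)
Your proof is correct and follows essentially the same approach as the paper: both arguments show that the generating function of $V_{C_1\times C_2}$ factors as the product of the generating functions of $V_{C_1}$ and $V_{C_2}$, using the coordinatewise decomposition of the projection onto the product cone together with independence of the Gaussian blocks. The paper packages this as a factorization of the conic Wills functional $W_C(\lambda)=\sum_k\lambda^k v_k(C)$ (working with $\dist^2(\vct{g},C)=\enormsm{\Proj_{C^\polar}(\vct{g})}^2$) and then matches polynomial coefficients, whereas you route through the MGF via Proposition~\ref{prop:exp-mom} (working with $\enormsm{\Proj_{C}(\vct{g})}^2$) and invoke uniqueness of the MGF; since the Wills functional is precisely $\Expect[\lambda^{V_C}]$ and is itself derived from Proposition~\ref{prop:exp-mom}, this is a cosmetic difference in parameterization rather than a distinct strategy.
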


We present a short proof of Corollary~\ref{cor:product-rule} based on
the conic Wills functional~\eqref{eqn:wills}.
This approach echoes Hadwiger's method~\cite{Had:75}
for computing the Euclidean intrinsic volumes of a product of convex sets.

\begin{proof}
Let $\vct{g}_1 \in \R^{d_1}$ and $\vct{g}_2 \in \R^{d_2}$ be independent standard Gaussian vectors.  The direct product $(\vct{g}_1, \vct{g}_2)$ is a standard Gaussian vector on $\R^{d_1+d_2}$.  For each $\lambda > 0$, the definition~\eqref{eqn:wills} of the Wills functional gives
\begin{align*}
W_{C_1 \times C_2}(\lambda)
	&= \lambda^{d_1+d_2} \Expect{} \exp\left( \frac{1 - \lambda^2}{2}
	\cdot \dist^2\big( (\vct{g}_1,\vct{g}_2), C_1 \times C_2 \big) \right) \\
	&= \lambda^{d_1} \Expect{} \exp\left( \frac{1 - \lambda^2}{2} \cdot
	\dist^2( \vct{g}_1, C_1) \right)
	\cdot \lambda^{d_2} \Expect{} \exp\left( \frac{1 - \lambda^2}{2} \cdot \dist^2(
	\vct{g}_2, C_2) \right)
	= W_{C_1}(\lambda) \cdot W_{C_2}(\lambda).
\end{align*}
The second identity follows from the fact that the squared distance to a product cone equals the sum of the squared distances to the factors; we have also invoked the independence of the two standard Gaussian vectors to split the expectation.  Applying the relation~\eqref{eqn:wills} twice, we find that
$$
W_{C_1 \times C_2}(\lambda) = W_{C_1}(\lambda) \cdot W_{C_2}(\lambda)
	= \left( \sum_{i=0}^{d_1} \lambda^i v_i(C_1) \right)
	\left( \sum_{j=0}^{d_2} \lambda^j v_j(C_2) \right)
	= \sum_{k=0}^{d_1+d_2} \lambda^k \sum_{i + j = k} v_i(C_1) \cdot v_j(C_2).
$$
But~\eqref{eqn:wills} also shows that
$$
W_{C_1 \times C_2}(\lambda) = \sum_{k=0}^{d_1+d_2} \lambda^k v_k(C_1 \times C_2).
$$
Comparing coefficients in these two polynomials, we arrive at the relation~\eqref{eqn:product}.
\end{proof}

\subsection{Concentration of the intrinsic volumes of a product cone}

We can employ the probabilistic techniques from
Section~\ref{sec:prob-intvols} to collect information about
the intrinsic volumes of a product cone.
Let $C_1 \in \cC_{d_1}$ and $C_2 \in \cC_{d_2}$ be two cones,
and consider \emph{independent} random variables $V_{C_1}$ and
$V_{C_2}$ whose distributions are given by the intrinsic volumes
of $C_1$ and $C_2$.  In view of Corollary~\ref{cor:product-rule},
$$
v_k(C_1 \times C_2) = \sum_{i+j=k} v_i(C_1) \cdot v_j(C_2)
	= \Prob\big\{ V_{C_1} + V_{C_2} = k \big\}
	\quad\text{for $k = 0, 1, 2, \dots, d_1 + d_2$.}
$$
In other words, the intrinsic volume random variable
$V_{C_1 \times C_2}$ of the product cone has the distribution
\begin{equation} \label{eqn:VC-prod}
V_{C_1 \times C_2} \sim V_{C_1} + V_{C_2}.
\end{equation}
This observation allows us to compute the statistical dimension
of the product cone:
\begin{equation} \label{eqn:sdim-prod}
\delta( C_1 \times C_2 ) = \Expect \big[ V_{C_1 \times C_2} \big]
	= \delta(C_1) + \delta(C_2).
\end{equation}
Of course, we can also derive~\eqref{eqn:sdim-prod} directly from
Proposition~\ref{prop:sdim}.
A more interesting consequence is the following expression for the
variance of the intrinsic volumes:
\begin{equation} \label{eqn:var-prod}
\Var\big[ V_{C_1 \times C_2} \big] = \Var[V_{C_1}] + \Var[V_{C_2}]
	\leq 2 \, \left[ \big(\delta(C_1) \wedge \delta(C_1^\polar) \big)
	+ \big(\delta(C_2) \wedge \delta(C_2^\polar)\big) \right].
\end{equation}
The inequality follows from Theorem~\ref{thm:variance-bd}.
With some additional effort, we can develop a concentration result
for the intrinsic volumes of a product cone that matches the
variance bound~\eqref{eqn:var-prod}.

\begin{corollary}[Concentration of intrinsic volumes for a product cone]
Let $C_1 \in \cC_{d_1}$ and $C_2 \in \cC_{d_2}$ be closed convex cones.
For each $\lambda \geq 0$,
$$
\Prob\big\{ \abs{ V_{C_1 \times C_2} - \delta(C_1 \times C_2) } \geq \lambda \big\}
	\leq 2 \, \exp\left( \frac{-\lambda^2/4}{\sigma^2 + \lambda/3} \right)
	\qtq{where}
	\sigma^2 := \big(\delta(C_1) \wedge \delta(C_1^\polar) \big)
	+ \big(\delta(C_2) \wedge \delta(C_2^\polar)\big).
$$
\end{corollary}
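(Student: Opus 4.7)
The plan hinges on independence.  The product rule~\eqref{eqn:VC-prod} together with the additivity~\eqref{eqn:sdim-prod} of the statistical dimension gives
$$
V_{C_1 \times C_2} - \delta(C_1 \times C_2) \sim W_1 + W_2,
\qquad W_i := V_{C_i} - \delta(C_i),
$$
where $W_1$ and $W_2$ are independent centered random variables.  Since the moment generating function of a sum of independent variables factors multiplicatively, the task reduces to producing a clean MGF bound for each $W_i$ featuring $\delta(C_i) \wedge \delta(C_i^\polar)$ in place of $\delta(C_i)$ alone.

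The central observation is that the two estimates in Theorem~\ref{thm:exp-bd} can be merged.  Set $A(\zeta) := \tfrac{1}{2}(\econst^{2\zeta} - 2\zeta - 1)$ and $B(\zeta) := \tfrac{1}{2}(\econst^{-2\zeta} + 2\zeta - 1)$.  The elementary inequality $\sinh(2\zeta) \geq 2\zeta$ rearranges to $B(\zeta) \leq A(\zeta)$ for $\zeta \geq 0$, so~\eqref{eqn:exp-bd-2} can be inflated to $\Expect \econst^{\zeta W_i} \leq \exp(A(\zeta)\,\delta(C_i^\polar))$, now with the same coefficient in $\zeta$ that appears in~\eqref{eqn:exp-bd-1}.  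Taking the smaller of the two,
$$
\Expect \econst^{\zeta W_i} \leq \exp\!\bigl(A(\zeta) \cdot (\delta(C_i) \wedge \delta(C_i^\polar))\bigr)
\quad\text{for $\zeta \geq 0$.}
$$
An identical derivation with $-\zeta$ in place of $\zeta$ yields the same bound for $\Expect \econst^{-\zeta W_i}$.  Independence of $W_1$ and $W_2$ then multiplies the single-factor estimates into
$\Expect \econst^{\pm \zeta (W_1 + W_2)} \leq \exp(A(\zeta)\, \sigma^2)$ for every $\zeta \geq 0$.

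From here the argument retraces the proof of Corollary~\ref{cor:VC-conc}, with $\sigma^2$ playing the role of $\delta(C) \wedge \delta(C^\polar)$.  The Laplace transform method, minimized over $\zeta > 0$ at $\zeta^\star = \tfrac{1}{2}\log(1 + \lambda/\sigma^2)$, produces the Bennett-type tail bound $\Prob\{\pm (W_1 + W_2) \geq \lambda\} \leq \exp\!\bigl(-\tfrac{1}{2}\sigma^2\,\psi(\lambda/\sigma^2)\bigr)$.  A union bound together with the Bernstein refinement $\psi(u) \geq u^2/(2 + 2u/3)$ already used to derive~\eqref{eqn:tail-combined} then delivers the stated inequality.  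The only step requiring genuine care is the cone-by-cone interleaving of~\eqref{eqn:exp-bd-1} and~\eqref{eqn:exp-bd-2} that produces the $\wedge$ in $\sigma^2$; everything else is a routine Chernoff calculation.
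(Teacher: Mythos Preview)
Your proof is correct and follows the same overall strategy as the paper: factor the moment generating function via independence~\eqref{eqn:VC-prod}, merge the two bounds of Theorem~\ref{thm:exp-bd} for each factor to extract the $\wedge$, and finish with the Laplace transform method.  The one technical difference lies in how~\eqref{eqn:exp-bd-1} and~\eqref{eqn:exp-bd-2} are combined.  The paper dominates both $A(\zeta)$ and $B(\zeta)$ by the Bernstein-type expression $\zeta^2/(1-2\abs{\smash{\zeta}}/3)$, obtaining $\Expect\,\econst^{\zeta W_i} \leq \exp\bigl(\zeta^2(\delta(C_i)\wedge\delta(C_i^\polar))/(1-2\abs{\smash{\zeta}}/3)\bigr)$ directly, and then plugs in the explicit choice $\zeta = \lambda/(2\sigma^2 + 2\lambda/3)$.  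You instead use the sharper relation $B(\zeta)\leq A(\zeta)$ to keep the Bennett coefficient $A(\zeta)$ through the optimization, and only weaken to Bernstein form at the end via $\psi(u)\geq u^2/(2+2u/3)$.  Your route produces a slightly stronger intermediate result---the Bennett-type tail bound $\exp\bigl(-\tfrac12\sigma^2\psi(\lambda/\sigma^2)\bigr)$---at no extra cost, while the paper's route avoids the explicit optimization; both land on the same final inequality.
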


This represents a significant improvement over the simple tail bound
from~\cite[Lem.~7.2]{ALMT:13}.  A similar result
holds for any finite product $C_1 \times \cdots \times C_r$
of closed convex cones.

\begin{proof}
First, recall the numerical inequality
$$
\frac{\econst^{2\zeta} - 2 \zeta - 1}{2}
	\leq \frac{\zeta^2}{1 - 2\abs{\smash{\zeta}}/3}
	\quad\text{for $\abs{\smash{\zeta}} < \tfrac{3}{2}$.}
$$
This estimate allows us to package the two exponential moment bounds from Theorem~\ref{thm:exp-bd} as
$$
\Expect \econst^{\zeta (V_C - \delta(C))}
	\leq \exp\left( \frac{\zeta^2 (\delta(C) \wedge \delta(C^\polar))}{1-2\abs{\smash{\zeta}}/3} \right)
	\quad\text{for $\abs{\smash{\zeta}} < \tfrac{3}{2}$.}
$$
Applying this bound twice, we learn that the exponential moments of the random variable $V_{C_1 \times C_2}$ satisfy
$$
\Expect \econst^{\zeta \, (V_{C_1 \times C_2} - \delta(C_1 \times C_2))}
	= \Expect{} \econst^{ \zeta \, (V_{C_1} - \delta(C_1))} \cdot
	\Expect \econst^{ \zeta \, (V_{C_2} - \delta(C_2))}
	\leq \exp\left( \frac{\zeta^2 \sigma^2}{1-2\abs{\smash{\zeta}}/3} \right).
$$
The first relation follows from the distributional
identity~\eqref{eqn:VC-prod} and the statistical dimension calculation~\eqref{eqn:sdim-prod}.  The Laplace transform method delivers
$$
\Prob\big\{ V_{C_1 \times C_2} - \delta(C_1 \times C_2) \geq \lambda \big\}
	\leq \inf_{\zeta > 0} \left\{ \econst^{- \zeta \lambda} \cdot \exp\left( \frac{\zeta^2 \sigma^2}{1-2\abs{\smash{\zeta}}/3} \right) \right\}
	\leq \exp\left( \frac{-\lambda^2/4}{\sigma^2 + \lambda/3} \right).
$$
We have chosen $\zeta = \lambda/(2\sigma^2 + 2\lambda/3)$ to reach the
second inequality.  We obtain the lower tail bound from the same argument. 
\end{proof}

\section{Examples} \label{sec:examples}

In this section, we demonstrate the vigor of the ideas from Section~\ref{sec:prob-intvols} by applying them to some concrete examples.  The probabilistic viewpoint provides new insights, and it enables us to complete some difficult calculations with minimal effort.

\subsection{The nonnegative orthant}

As a warmup, we begin with an example where it is easy to compute the
intrinsic volumes directly.  The \term{nonnegative orthant} $\R_+^d$
is the polyhedral cone
$$
\R_+^d := \big\{ \vct{x} \in \R^d : x_i \geq 0 \text{ for $i=1, \dots, d$.} \big\}.
$$
The nonnegative orthant is self-dual, which immediately delivers several results.
For typographical felicity, we abbreviate $C = \R_+^d$.  Applying the identity~\eqref{eqn:self-dual} and Theorem~\ref{thm:variance-bd}, we find that
$$
\delta(C) = \Expect[ V_C ] = \half d
\qtq{and}
\Var[V_C] \leq 2 \delta(C) = d.
$$
The tail bound~\eqref{eqn:tail-combined} specializes to
$$
\Prob\big\{ \abs{ V_C - \half d} \geq \lambda \big\}
	\leq 2 \, \exp\left( \frac{-\lambda^2}{2d + 4\lambda/3} \right).
$$
These estimates already provide a significant amount of information about
the intrinsic volumes of the orthant.

How well do these bounds describe the actual behavior of the intrinsic
volumes?  Appealing directly to Definition~\ref{def:intvols},
we can check that $V_C \sim \textsc{binomial}\big(d, \half\big)$.
See, for example,~\cite[Ex.~4.4.7]{Ame:11}.  Therefore,
$$
\Expect[ V_C ] = \half d
\qtq{and}
\Var[ V_C ] = \tfrac{1}{4} d.
$$
Furthermore, the binomial random variable satisfies a sharp tail bound
of the form
$$
\Prob\big\{ \abs{ V_C - \half d } \geq \lambda \big\}
	\leq 2 \, \exp\left( \frac{-2\lambda^2}{d} \right).
$$
We discover that our general results overestimate the variance of
$V_C$ by a factor of four, but they do capture the subgaussian
decay of the intrinsic volumes.

\subsection{The cone of positive-semidefinite matrices}

Our approach to intrinsic volume calculations is most valuable
when there is no explicit formula for the intrinsic volumes
or the expressions are too complicated to evaluate easily.
For a challenge of this type, let us consider the
cone of real positive-semidefinite matrices.  We can
compute the mean and variance of the intrinsic volume
sequence of this cone by combining our methods with
established results from random matrix theory.

The cone $\mathbb{S}_+^n$ consists of
all $n \times n$ positive-semidefinite (psd) matrices:
$$
\mathbb{S}_{+}^n := \big\{ \mtx{X} \in \R_{\rm sym}^{n \times n} : \vct{u}^\transp \mtx{X} \vct{u} \geq 0 \text{ for all $\vct{u} \in \R^n$} \big\}
$$
where $\R_{\rm sym}^{n \times n}$ consists of $n \times n$
symmetric matrices.  This vector space has dimension
$d = n(n+1)/2$.  The psd cone is self-dual with respect
to $\R_{\rm sym}^{n \times n}$, so the expression~\eqref{eqn:self-dual}
shows that the statistical dimension
$$
\delta( \mathbb{S}_+^n ) = \frac{n(n+1)}{4}.
$$
As with the nonnegative orthant, we immediately obtain bounds
on the variance and concentration inequalities for the intrinsic
volumes.

We will use Proposition~\ref{prop:variance}
to compute the variance of the sequence of
intrinsic volumes when $n$ is large.
Let us abbreviate $C = \mathbb{S}_+^n$.
The intrinsic volumes do not depend on the embedding dimension
of the cone, so there is no harm in treating the cone as a
subset of the linear space $\R^{n \times n}$ of square
matrices.  To compute the metric projection of a matrix $\mtx{X} \in \R^{n \times n}$
onto the cone $C$, we first extract the symmetric part of the matrix and then
compute the positive part~\cite[p.~99]{Bha:97} of the Jordan decomposition:
$$
\Proj_C(\mtx{X}) = \Proj_C\big( \half (\mtx{X} + \mtx{X}^\transp) \big)
	= \half (\mtx{X} + \mtx{X}^\transp)_+.
$$
It follows that
$$
\fnorm{\Proj_C(\mtx{X})}^2 = \tfrac{1}{4} \fnorm{ \big(\mtx{X} + \mtx{X}^\transp \big)_+ }^2
	= \tfrac{1}{4} \trace\big[ \big( \mtx{X} + \mtx{X}^\transp \big)_+^2 \big].
$$
Let $\mtx{G}_n \in \R^{n \times n}$ be a matrix with independent standard Gaussian
entries.  Then the matrix $\mtx{W}_n = 2^{-1/2} \big(\mtx{G}_n + \mtx{G}_n^\transp\big) \in \R_{\rm sym}^{n\times n}$ is a member of the Gaussian orthogonal ensemble (GOE).  We have
$$
\fnorm{ \Proj_C(\mtx{G}_n) }^2 = \tfrac{1}{2} \trace \big[ (\mtx{W}_n)_+^2 \big].
$$
To invoke Proposition~\ref{prop:variance}, we must compute the variance of this quantity.

Our method is to renormalize the matrix and invoke asymptotic results for the GOE.  From the formula above,
$$
\Var\big[ \fnorm{ \Proj_C(\mtx{G}_n) }^2 \big]
= \Var\left[ \frac{n}{2} \cdot \trace \big[ \big(n^{-1/2} \mtx{W}_n \big)_+^2 \big] \right]
	= \frac{n^2}{4}\Var\left[ \frac{1}{n} \trace \big[ \big( \mtx{W}_n \big)_+^2 \big] \right].
$$
The final term involves the variance of the  function $h(s) := (s)_+^2 = \max\{s, 0\}^2$ applied to the empirical spectral distribution of \(\mtx W_n\).  In the limit as $n \to \infty$, this variance can be expressed in terms of an integral against a kernel associated with the GOE~\cite[Thm.~9.2]{BaiSil:10}.  
$$
\Var\left[\frac{1}{n} \trace\big[\big( \mtx{W}_n \big)^2_+\big] \right]
	\to \int_{-2}^2 \int_{-2}^2 h'(s) h'(t) \cdot \rho_{\rm GOE}(s,t) \idiff{s}\idiff{t},
$$
where the kernel takes the form
$$
\rho_{\rm GOE}(s,t) = \frac{1}{2 \pi^2} \log\left(
	\frac{4 - st + \sqrt{(4-s^2)(4-t^2)}}{4 - st - \sqrt{(4-s^2)(4-t^2)}} \right).
$$
With the assistance of the Mathematica computer algebra system, we determine that the double integral equals $1 + 16/\pi^2$.  Therefore,
$$
\Var\big[ \fnorm{ \Proj_C(\mtx{G}_n) }^2 \big]
	= \frac{n^2}{4} \left(1 + \frac{16}{\pi^2}\right)
	+ o(n^2)
	\quad\text{as $n \to \infty$.}
$$
Proposition~\ref{prop:variance} yields
$$
\Var[V_C] = \Var\big[ \fnorm{ \Proj_C(\mtx{G}_n) }^2 \big]
	- 2\delta(C)
	= \frac{n^2}{4} \left(\frac{16}{\pi^2} - 1\right) + o(n^2)
	\quad\text{as $n \to \infty$.}
$$
In particular,
$$
\frac{\Var[V_C]}{\delta(C)} \to \frac{16}{\pi^2} - 1
\quad\text{as $n \to \infty$.}
$$
This ratio measures how much the intrinsic volumes are spread
out relative to the size of the cone.

As a point of comparison,
Amelunxen \& B{\"u}rgisser have computed the intrinsic
volumes of the psd cone exactly using methods from differential
geometry~\cite[Thm.~4.1]{AmeBur:14}.  The expressions,
involving Mehta integrals, can be evaluated for low-dimensional
cones, but they have resisted asymptotic analysis.

\subsection{Circular cones}
\label{sec:circular}

A \term{circular cone} $C$ in $\R^d$ with angle $0 \leq \alpha \leq \pi/2$ takes the form
$$
C = \Circ_d(\alpha) := \big\{ \vct{x} \in \R^d : x_1 \geq \enorm{\vct{x}} \cos(\alpha) \big\}.
$$
In particular, this family includes the \term{second-order cone} $\mathbb{L}^{d} := \Circ_{d}(\pi/4)$.  Second-order cones are also known as \term{Lorentz cones}, and they are self-dual.

With some effort, it is possible to work out the intrinsic volumes of a circular cone~\cite[Ex.~4.4.8]{Ame:11}.  Instead, we apply our techniques to compute the mean and variance of the intrinsic volume random variable.  This calculation demonstrates that circular cones with a small angle saturate the variance bound from Theorem~\ref{thm:variance-bd}.  Afterward, we sketch an argument that small circular cones also saturate the upper tail bound from Corollary~\ref{cor:VC-conc}.

\subsubsection{Mean and variance calculations}
\label{sec:circcone-meanvar}

Fix an angle $\alpha \in (0, \pi/2)$.  We consider the circular cone $C = \Circ_d(\alpha)$ where the dimension $d$ is large.  For each unit vector $\vct{u} \in \R^d$, elementary trigonometry shows that
$$
\enormsm{\Proj_C(\vct{u})}^2 = H(\arccos(u_1))
\qtq{where}
H(\beta) := \begin{cases}
	1, & \beta \in [0, \alpha) \\
	\cos^2(\beta - \alpha), & \beta \in [\alpha, \alpha + \pi/2] \\
	0, & \beta \in (\alpha + \pi/2, \pi].
\end{cases}
$$
Recall the polar decomposition $\vct{g} = R \cdot \vct{\theta}$ where $R$ and $\vct{\theta}$ are independent, $R^2$ is a chi-square random variable with $d$ degrees of freedom, and $\vct{\theta}$ is uniformly distributed on the sphere.  With this notation,
\begin{equation} \label{eqn:proj-circ-cone}
\enormsm{\Proj_C(\vct{g})}^2 = R^2 \cdot H\big(\arccos(\theta_1)\big).
\end{equation}
This expression allows us to evaluate the moments of $\enormsm{\Proj_C(\vct{g})}^2$ quickly.

We begin with the statistical dimension.  Combine Proposition~\ref{prop:sdim} with the expression~\eqref{eqn:proj-circ-cone} and integrate in polar coordinates to reach
$$
\delta(C) = \frac{d}{\kappa_d} \int_{0}^\pi H(\beta) \sin^{d-2}(\beta) \idiff{\beta}
\qtq{where}
\kappa_d := \int_{0}^\pi \sin^{d-2}(\beta) \idiff{\beta}.
$$
A more detailed version of this calculation appears in~\cite[Prop.~6.8]{McC:13}.
The function $\beta \mapsto \sin^{d-2}(\beta)$ peaks sharply around $\pi/2$, so
it does little harm to replace $H$ by the function
$\widetilde{H}(\beta) = \cos^2(\beta - \alpha)$ in the integrand.  Computing this simpler integral,
we obtain a closed-form expression plus a remainder term:
\begin{equation} \label{eqn:circcone-sdim}
\delta(C) = d \sin^2(\alpha) + \cos(2\alpha) + \eps_1(\alpha, d).
\end{equation}
We assert that the remainder term is exponentially small as a function of the dimension:
$$
\abs{\eps_1(\alpha, d)} < \sqrt{\frac{\pi}{8}} \cdot d^{3/2} \cdot
\exp\left( - \frac{1}{2} (d-1) \cdot \big(\alpha \wedge (\pi/2-\alpha)\big)^2 \right).
$$
The precise form of the error is not particularly important here, so we omit the details.

The same approach delivers the variance of $V_C$.  Repeating the argument above, we get
$$
\Expect\big[ \enormsm{\Proj_C(\vct{g})}^4 \big]
	= \frac{d(d+2)}{\kappa_d} \int_0^\pi H(\beta)^2 \sin^{d-2}(\beta) \idiff{\beta}.
$$
Replace $H$ with $\widetilde{H}$ and integrate to obtain
\begin{equation} \label{eqn:circcone-second-mom}
\Expect\big[ \enormsm{\Proj_C(\vct{g})}^4 \big]
	= \tfrac{3}{8} d(d+2) - 4(d-2)(d+2) \cos(2\alpha) - (d-4)(d-2)\cos(4\alpha)
	+ \eps_2(\alpha,d).
\end{equation}
The error term satisfies the bound
$$
\abs{\eps_2(\alpha,d)} \leq \sqrt{\frac{\pi}{8}} \cdot d^{3/2} (d+2) \cdot
	\exp\left( - \frac{1}{2} (d-1) \cdot \big(\alpha \wedge (\pi/2-\alpha)\big)^2 \right).
$$
In view of Proposition~\ref{prop:sdim} and Proposition~\ref{prop:variance}, we determine that
\begin{equation} \label{eqn:circcone-var-form}
\Var[V_C] = \Var\big[ \enormsm{\Proj_C(\vct{g})}^2 \big] - 2\delta(C)
	= \Expect\big[ \enormsm{\Proj_C(\vct{g})}^4 \big]
	- \delta(C)^2 - 2\delta(C).
\end{equation}
Combine~\eqref{eqn:circcone-sdim},~\eqref{eqn:circcone-second-mom}, and~\eqref{eqn:circcone-var-form} and simplify to reach
\begin{equation*} \label{eqn:circcone-var}
\Var[V_C]  = \half (d - 2) \sin^2(2\alpha) + \eps_3(\alpha,d).
\end{equation*}
Once again, the remainder term is exponentially small in the dimension $d$ for each fixed $\alpha \in (0, \pi/2)$.

These calculations allow us to compare the variance of $V_C$ with the statistical dimension $\delta(C)$:
$$
\frac{\Var[V_C]}{\delta(C)} = \frac{(d-2) \sin^2(2\alpha)}{2 d \sin^2(\alpha)} + \eps_4(\alpha, d)
	\to 2 \cos^2(\alpha)
	\quad\text{as $d \to \infty$ with fixed $\alpha > 0$.}
$$
By considering a sufficiently small angle $\alpha$, we can find a circular cone $C = \Circ_d(\alpha)$ for which $\Var[V_C]$ is arbitrarily close to $2\delta(C)$.
In conclusion, we cannot improve the constant two in the variance bound from Theorem~\ref{thm:variance-bd}.

\subsubsection{Tail behavior}

Circular cones also exhibit tail behavior that matches the predictions of Corollary~\ref{cor:VC-conc} exactly.  It takes some technical effort to establish this
claim in detail, so we limit ourselves to a sketch of the argument.  These ideas
are drawn from~\cite[Sec.~6.2]{ALMT:13}.

Fix an angle $0 < \alpha \ll \tfrac{\pi}{2}$, and abbreviate $q = \sin^2(\alpha)$.
Consider the circular cone $C = \Circ_d(\alpha)$ where the dimension takes the form $d = 2(n+1)$ where $n$ is a large integer.  In particular, the formula~\eqref{eqn:circcone-sdim} shows that the statistical dimension $\delta(C) \approx d\sin^2(\alpha) \approx 2nq$.  It can be established that the odd intrinsic volumes of $C$ follow a binomial distribution~\cite[Ex.~4.4.8]{Ame:11}:
$$
2 \, v_{2k+1}(C) = \Prob\big\{ Y = k \}
	\qtq{where}
	Y = Y_n \sim \textsc{binomial}(n,q). 
$$
As a consequence of the Gauss--Bonnet Theorem~\cite[Thm.~6.5.5]{SchWei:08}, there is an interlacing result~\cite[Prop.~5.6]{ALMT:13} for the upper tail of $V_C$.
$$
\Prob\big\{ Y \geq k \big\}
	\leq \Prob\big\{ V_C \geq 2k \big\}
	\leq \Prob\big\{ Y \geq k - 1 \big\}.
$$
Thus, accurate probability bounds for $V_C$ follow from bounds for the binomial random variable $Y$.

Our tail inequality, Corollary~\ref{cor:VC-conc}, predicts that $V_C$ has subgaussian behavior for moderate deviations.  To see that circular cones actually display this behavior, we turn to the classical limits for the binomial random variable $Y_n$.  The Laplace--de Moivre central limit theorem states that
$$
\Prob\big\{ Y_n - nq \geq t \sqrt{nq(1-q)} \big\}
	\to 1 - \Phi(t)
	\quad\text{as $n \to \infty$ with $q$ fixed.}
$$
Here, $\Phi$ denotes the distribution function of a standard Gaussian variate.  When $q \approx 0$ and $n$ is large, we can invoke the approximation $\delta(C) \approx 2nq$ and a tail bound for the Gaussian distribution to obtain
$$
\Prob\big\{ V_C - \delta(C) \geq \lambda \sqrt{\delta(C)} \big\}
	\approx \Prob\big\{ V_C - 2nq \geq \lambda \sqrt{2nq(1-q)} \big\}
	\approx \Prob\big\{ Y_n - nq \geq (2^{-1/2} \lambda) \sqrt{nq(1-q)} \big\}
	\approx \econst^{-\lambda^2/4}.
$$
This expression matches the behavior expressed in the weaker tail bound~\eqref{eqn:tail-combined}.  In other words, we see that the intrinsic volumes of a small circular cone have subgaussian concentration for moderate deviations, with variance approximately $2 \delta(C)$.

Corollary~\ref{cor:VC-conc} also predicts that $V_C$ has Poisson tails for very large deviations.  Vanishingly small circular cones display this behavior.  Suppose that $q = q_n = b / n$ for a large constant $b$.
The approximation $\delta(C) \approx 2b$ and Chernoff's bound for the tail of a binomial random variable together give
$$
\Prob\big\{ V_C - \delta(C) \geq \lambda \delta(C) \big\}
	\approx \Prob \big\{ V_C - 2b \geq 2\lambda  b \big\}
	\approx \Prob \big\{ Y_n - b \geq \lambda b \big\}
	\approx \econst^{ b \, (\lambda - (1+\lambda)\log(1+\lambda)) }.
$$
After a change of variables, this formula coincides with the tail bound~\eqref{eqn:exp-bd-1}.  The Chernoff bound is quite accurate in this regime, so
we see that~\eqref{eqn:exp-bd-1} is saturated by vanishingly small circular cones
in high dimensions.

\subsection{Summary of calculations}

We conclude this section with an overview of the statistical dimension and variance
calculations.  See Table~\ref{tab:sdim-var} for this material.
Observe that the ratio of the variance $\Var[V_C]$ 
to the statistical dimension $\delta(C)$
can range from zero to two.  A subspace $L_k$ with dimension $k$
shows that the lower bound is achievable across the entire range of statistical dimensions.
The circular cones $\Circ_d(\alpha)$ show that the upper bound is saturated by cones whose statistical dimension is small.  Amelunxen (personal communication) has conjectured that somewhat tighter bounds are possible when $\delta(C) \approx \half d$, but this remains to be established.

\begin{table}[t]
\caption{\textbf{Statistical dimension and variance calculations.}
This table lists the statistical dimension $\delta(C)$
and the variance $\Var[V_C]$ for several families of convex cones.
The last column displays the limiting ratio $\Var[V_C] /\delta(C)$
as the dimensional parameter tends to infinity.} \label{tab:sdim-var}
\renewcommand{\arraystretch}{1.7}
\begin{tabular}{lcccc}
\toprule
{Cone} & {Notation} & { Mean $\delta(C)$ } & { Variance $\Var[V_C]$ } & { Limit of $\Var[V_C]/\delta(C)$ }\\
\midrule Subspace & $L_k$ & $k$ & 0 & 0 \\
Nonnegative orthant & $\R_+^d$ & $\half d$ & $\tfrac{1}{4} d$ & $\tfrac{1}{2}$ \\
Real positive-semidefinite cone & $\mathbb{S}_+^n$ & $\tfrac{1}{4} n(n+1)$ & $\approx \big(\frac{4}{\pi^2} - \frac{1}{4} \big)  n^2$ & $\tfrac{16}{\pi^2} - 1$ \\
Second-order cone & $\mathbb{L}^{d}$ & $\half d$ & $\approx \half (d - 2)$ & 1 \\
Circular cone & $\Circ_d(\alpha)$ & $\approx d \sin^2(\alpha)$ & $\approx \half (d-2) \sin^2(2\alpha)$ & $2 \cos^2(\alpha)$ \\
\multicolumn{4}{l}{Upper bound (Theorem~\ref{thm:variance-bd})} & 2 \\
\bottomrule
\end{tabular}
\end{table}

\section{Background on conic geometry}
\label{sec:backgr-conv-cones}

This section summarizes the foundational material that we require to establish the master Steiner formula.  We provide sketches or references in lieu of proofs to keep the presentation lean. 
Most of the material here is drawn from the books~\cite{Roc:70,RW:98,Bar:02,SchWei:08}.

\subsection{The tiling induced by a polyhedral cone}
\label{sec:tiling-induced-cone}
This section describes a fundamental decomposition of \(\R^d\) induced by a closed convex cone and its polar.  First, we recall a few basic facts that we will use liberally in our development. Given a cone $C \in \cC_d$, every point $\vct{x} \in \R^d$ can be expressed as an orthogonal sum of the type
\begin{equation} \label{eqn:orth-decomp}
\vct{x} = \Proj_C(\vct{x}) + \Proj_{C^\polar}(\vct{x})
\qtq{where}
\Proj_C(\vct{x}) \perp \Proj_{C^\polar}(\vct{x}).
\end{equation}
We often use the consequence
\begin{equation} \label{eqn:dist-proj}
\enorm{\Proj_{C^\polar}(\vct{x})}^2 = \dist^2( \vct{x}, C ).
\end{equation}
Another outcome is the Pythagorean relation
\begin{equation} \label{eqn:pythag}
\enormsq{ \vct{x} } = \enormsq{ \Proj_C(\vct{x}) } + \enormsq{ \Proj_{C^\polar}(\vct{x}) }.
\end{equation}
See Rockafellar~\cite[pp.~338--341]{Roc:70} for more information about this construction.

Let $C$ be a polyhedral cone.  A \term{face} $F(\vct{u})$ of $C$ with outward normal $\vct{u}$ takes the form
$$
F(\vct{u}) := \big\{ \vct{x} \in C : \ip{ \vct{u} }{ \vct{x} } = \sup\nolimits_{\vct{y} \in C}
	\ip{ \vct{u} }{ \smash{\vct{y}} } \big\}.
$$
The face $F(\vct{u})$ is nonempty if and only if $\vct{u} \in C^\polar$; otherwise, the supremum is infinite. The \term{linear hull} $\lin(K)$ of a convex set $K \subset \R^d$ is the intersection of all subspaces that contain $K$. The \term{dimension} of a face $F$ is the dimension of its linear hull $\lin(F)$.

Recall that the polar of a polyhedral cone is always a polyhedral cone.  The outward normals of a face $F$ of the cone $C$ comprise a face $N_F$ of the polar cone $C^\polar$ called the \term{normal face}:
$$
N_F := \lin(F)^\polar \cap C^\polar.
$$

Each polyhedral cone $C$ induces a tiling of $\R^d$, where each tile is an orthogonal sum of the faces of $C$ and the normal faces of $C^\polar$.  The following statement of this claim amplifies an observation of McMullen~\cite[Lem.~3]{McM:75}. Below, the \term{relative interior} $\relint(K)$ refers to the interior with respect to the relative topology induced by $\R^d$ on the linear hull $\lin(K)$.

\begin{fact}[The tiling induced by a polyhedral cone] \label{fact:tiling}
Let $C \in \cC_d$ be a polyhedral cone.  Then the inverse image of the relative interior of a face $F$ has the orthogonal decomposition
\begin{equation} \label{eqn:face-decomp}
\Proj_C^{-1}\big(\relint(F) \big) = \relint(F) + N_F.
\end{equation}
Moreover, the space $\R^d$ is a disjoint union of the inverse images of the faces
of the cone $C$:
\begin{equation} \label{eqn:Rd-partition}
\R^d = \bigsqcup_{\text{$F$ a face of $C$}} \big( \relint(F) + N_F \big).
\end{equation}
\end{fact}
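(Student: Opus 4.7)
The plan is to combine the Moreau decomposition~\eqref{eqn:orth-decomp} with the standard variational characterization that $\vct{p} = \Proj_C(\vct{x})$ precisely when $\vct{p} \in C$ and $\vct{x} - \vct{p}$ lies in the normal cone
$$
N_C(\vct{p}) := \big\{ \vct{u} \in C^\polar : \ip{\vct{u}}{\vct{p}} = 0 \big\}.
$$
Granting these tools, both assertions of the fact reduce to identifying $N_C(\vct{f})$ with the normal face $N_F$ whenever $\vct{f} \in \relint(F)$.

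The technical crux is the following key lemma: for every face $F$ of the polyhedral cone $C$ and every $\vct{f} \in \relint(F)$, one has $N_C(\vct{f}) = N_F$. The inclusion $N_F \subset N_C(\vct{f})$ is immediate since $N_F \subset \lin(F)^\polar$ and $\vct{f} \in \lin(F)$. For the reverse inclusion, take $\vct{u} \in C^\polar$ with $\ip{\vct{u}}{\vct{f}} = 0$. Because $\vct{f}$ lies in the \emph{relative} interior of $F$, for each $\vct{y} \in \lin(F)$ of sufficiently small norm both points $\vct{f} \pm \vct{y}$ belong to $F \subset C$. Pairing $\vct{u}$ against these and using $\vct{u} \in C^\polar$ gives $\pm \ip{\vct{u}}{\vct{y}} \leq 0$, so $\vct{u} \perp \lin(F)$, and hence $\vct{u} \in N_F$.

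With the lemma in hand, the forward inclusion in~\eqref{eqn:face-decomp} follows by writing $\vct{x} = \Proj_C(\vct{x}) + \Proj_{C^\polar}(\vct{x})$: the variational characterization places $\Proj_{C^\polar}(\vct{x}) \in N_C(\Proj_C(\vct{x})) = N_F$ as soon as $\Proj_C(\vct{x}) \in \relint(F)$. The reverse inclusion is symmetric: given $\vct{x} = \vct{f} + \vct{n}$ with $\vct{f} \in \relint(F)$ and $\vct{n} \in N_F = N_C(\vct{f})$, the characterization identifies $\Proj_C(\vct{x}) = \vct{f}$ directly, so $\vct{x} \in \Proj_C^{-1}(\relint(F))$.

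For the partition~\eqref{eqn:Rd-partition}, I would invoke the standard polyhedral fact that the relative interiors of the faces of $C$ form a disjoint union whose total is $C$. Since every $\vct{x} \in \R^d$ has a unique projection $\Proj_C(\vct{x}) \in C$, it lies in $\Proj_C^{-1}(\relint(F))$ for exactly one face $F$, and~\eqref{eqn:face-decomp} rewrites each preimage as the claimed orthogonal sum; disjointness of the tiles is inherited from disjointness of the relative interiors via uniqueness of $\Proj_C(\vct{x})$. The main obstacle I anticipate is executing the key lemma cleanly, since it is the one place where the polyhedral hypothesis and the two distinct notions of normal cone (face-theoretic $N_F$ versus variational $N_C(\vct{f})$) have to be reconciled; once that identification is installed, the remainder is bookkeeping on top of the Moreau decomposition.
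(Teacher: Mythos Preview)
Your proposal is correct and follows essentially the same route the paper indicates: the paper's own ``proof'' of this fact merely says it is almost obvious from the Moreau decomposition~\eqref{eqn:orth-decomp} and defers to~\cite[Prop.~A.8]{McC:13} for details, and your argument is precisely the natural detailed execution of that idea---using the variational characterization of $\Proj_C$ together with the identification $N_C(\vct{f}) = N_F$ for $\vct{f} \in \relint(F)$. If anything, you supply more than the paper does in-text; the key lemma and the partition step are handled cleanly.
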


\noindent
Fact~\ref{fact:tiling} is almost obvious from the orthogonal decomposition~\eqref{eqn:orth-decomp}.  See~\cite[Prop.~A.8]{McC:13}
for a detailed proof.

\subsection{The solid angle of a cone}
\label{sec:polytope-angles}

Let $C$ be a convex cone whose linear hull is $j$-dimensional.
The \term{solid angle} of the cone is defined as
\begin{equation} \label{eq:solid-angle-probab}
\sang(C) := \frac{1}{(2\pi)^{j/2}} \int_C \econst^{-\enormsm{\vct{x}}^2/2} \idiff{\vct{x}}
	= \Prob\big\{ \vct{g}_C \in C \big\} = \Prob\big\{ \vct{\theta}_C \in C \big\}.
\end{equation}
The volume element $\diff{\vct{x}}$ derives from the Lebesgue measure
on the linear hull $\lin(C)$.  The random vector $\vct{g}_C$ has the standard
Gaussian distribution on $\lin(C)$, and $\vct{\theta}_C$ is uniformly distributed on
the unit sphere in $\lin(C)$.  We use the convention that the unit sphere
in the zero-dimensional Euclidean space $\R^0$ is the set $\sphere{-1} := \{0\}$.

Let $C$ be a polyhedral cone, and let $F$ be a face of $C$ with normal face $N_F$.
The \term{internal angle} of $F$ is the solid angle $\sang(F)$,
while the \term{external angle} of $F$ is the solid angle $\sang(N_F)$.
The intrinsic volumes of a polyhedral cone can be written in terms
of the internal and external angles of the faces.

\begin{fact}[Intrinsic volumes and polyhedral angles] \label{fact:polytope-angles}
Let $C \in \cC_d$ be a polyhedral cone, and let $\coll{F}_k(C)$ be the family of $k$-dimensional faces of $C$.  Then
\begin{equation*} \label{eq:poly-angle-int-vols}
	v_k(C) = \sum\nolimits_{F \in \coll{F}_k(C)}
	\sang(F) \sang(N_F).
\end{equation*}
\end{fact}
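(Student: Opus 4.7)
The plan is to unfold Definition~\ref{def:intvols} using the tiling from Fact~\ref{fact:tiling}, and then exploit the orthogonal splitting of a standard Gaussian vector to factor each resulting probability into a product of two solid angles.

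First I would recall that, by Definition~\ref{def:intvols}, the quantity $v_k(C)$ is the probability that $\Proj_C(\vct{g})$ lands in the relative interior of some $k$-dimensional face of $C$. Since the faces are disjoint in their relative interiors and finite in number, I can write
\[
v_k(C) = \sum_{F \in \coll{F}_k(C)} \Prob\big\{ \Proj_C(\vct{g}) \in \relint(F) \big\}.
\]
Applying the preimage identity~\eqref{eqn:face-decomp} from Fact~\ref{fact:tiling} converts each summand into $\Prob\{ \vct{g} \in \relint(F) + N_F \}$.

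Next I would factor each probability using the orthogonality $\lin(F) \perp \lin(N_F)$, which follows because $N_F \subseteq \lin(F)^\polar = \lin(F)^\perp$. Writing $\vct{g} = \vct{g}_1 + \vct{g}_2$ where $\vct{g}_1$ and $\vct{g}_2$ are the orthogonal projections of $\vct{g}$ onto $\lin(F)$ and $\lin(F)^\perp$, respectively, the marginal property of the standard Gaussian distribution implies that $\vct{g}_1$ and $\vct{g}_2$ are independent standard Gaussians on their respective subspaces. Because $\relint(F) \subseteq \lin(F)$ and $N_F \subseteq \lin(F)^\perp$, the event $\{\vct{g} \in \relint(F) + N_F\}$ factors as $\{\vct{g}_1 \in \relint(F)\} \cap \{\vct{g}_2 \in N_F\}$, so independence gives
\[
\Prob\{ \vct{g} \in \relint(F) + N_F \}
= \Prob\{ \vct{g}_1 \in \relint(F) \} \cdot \Prob\{ \vct{g}_2 \in N_F \}.
\]
Finally, since the relative boundary of a polyhedral cone has Lebesgue measure zero within its linear hull, replacing $\relint(F)$ by $F$ does not change the first probability, and the definition~\eqref{eq:solid-angle-probab} of solid angle identifies the two factors as $\sang(F)$ and $\sang(N_F)$.

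The main potential pitfall is the measure-theoretic bookkeeping: one must be careful that the Gaussian on $\R^d$ genuinely decomposes as the product of Gaussians on $\lin(F)$ and $\lin(F)^\perp$ (rather than on $\lin(F)$ and $\lin(N_F)$, which may be a proper subspace of the latter when $\dim N_F < d - k$), and that the zero-dimensional conventions in~\eqref{eq:solid-angle-probab} correctly handle the degenerate cases $F = \{\vct{0}\}$ and $N_F = \{\vct{0}\}$. Everything else is routine: the faces of $C$ are finite in number, their relative interiors are disjoint, and the disjoint-union statement~\eqref{eqn:Rd-partition} guarantees that summing over $k$-faces recovers exactly the event in Definition~\ref{def:intvols}.
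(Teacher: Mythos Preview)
Your proposal is correct and follows exactly the route the paper sketches: start from Definition~\ref{def:intvols}, decompose the event via the tiling identity~\eqref{eqn:face-decomp}, split the Gaussian along $\lin(F)\oplus\lin(F)^\perp$, and identify the two factors with the solid angles in~\eqref{eq:solid-angle-probab}. The one point you flag but leave open---whether $\lin(N_F)$ could be a proper subspace of $\lin(F)^\perp$---does not occur for polyhedral cones, since the face--normal-face correspondence always gives $\dim F + \dim N_F = d$; with that noted, the argument is complete.
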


\noindent
Fact~\ref{fact:polytope-angles} is a direct consequence of
the Definition~\ref{def:intvols} of the intrinsic volumes of a polyhedral cone,
the orthogonal decomposition~\eqref{eqn:face-decomp} of the inverse image of a face, and the geometric interpretation~\eqref{eq:solid-angle-probab} of the solid angles.
This result can be traced at least as far back as
~\cite{McM:75}; see also~\cite[Eqn.~(6.47)]{SchWei:08}.
A complete proof appears in~\cite[Prop.~A.8]{McC:13}.

\begin{remark}[Alternative notation]
In the literature, the internal angle of a face $F$ of a cone $C$ is often denoted by $\beta(\vct{0}, F)$, and the external angle is often denoted by $\gamma(F,C)$.
\end{remark}

\subsection{The Hausdorff topology on convex cones}
\label{sec:relat-betw-conic}

In this section, we develop a metric topology on the class $\cC_d$ of closed convex cones.  This topology leads to notions of approximation and convergence, and it provides a way to extend results for polyhedral cones to general closed convex cones.  See~\cite[Sec.~3.2]{Ame:11} for a more comprehensive treatment.

To construct an appropriate metric, we begin by defining the angular distance between two nonzero vectors:
$$
\dist_s(\vct{x}, \vct{y}) := \arccos\left(
	\frac{\ip{\vct{x}}{\smash{\vct{y}}}}{\enormsm{\vct{x}} \enormsm{\vct{y}}} \right)
	\quad\text{for}\quad \vct{x}, \vct{y} \in \R^d \setminus \{\vct{0} \}.
$$
We instate the conventions that $\dist_s(\vct{0}, \vct{0}) = 0$ and $\dist_s(\vct{x}, \vct{0}) = \dist_s(\vct{0}, \vct{x}) = \pi/2$ for $\vct{x} \neq \vct{0}$.
This definition extends to closed convex cones $C, C' \in \cC_d$ via the rule
$$
\dist_s(C, C') :=
	\inf\nolimits_{\substack{\vct{x} \in C \\ \vct{y}\in C'}} \dist_s(\vct{x}, \vct{y})
	\quad\text{when $C, C' \neq \{ \vct{0} \}$.}
$$
The trivial cone $\{\vct{0}\}$ demands special attention.  We set $\dist_s(\{\vct{0}\}, \{\vct{0}\}) = 0$, while $\dist_s(\{\vct{0}\}, C) = \dist_s(C, \{\vct{0}\}) = \pi/2$
when $C \neq \{\vct{0}\}$.

The \term{angular expansion} $\aTube(C, \alpha)$ of a cone $C \in \cC_d$ by an angle $0 \leq \alpha \leq 2\pi$ is the union of all rays that lie within an angle $\alpha$ of the cone.  Equivalently,
$$
\aTube(C, \alpha) := \big\{ \vct{x} \in \R^d : \dist_s(\vct{x}, \vct{y}) \leq \alpha
	\text{ for some $\vct{y} \in C$} \big\}.
$$
Note that the expansion $\aTube(C,\alpha)$ of a convex cone need not be convex for any $\alpha > 0$.  For instance, the angular expansion of a proper subspace is never convex.

Define the \term{conic Hausdorff metric} $\distH(C_1,C_2)$ between two cones $C_1, C_2 \in \cC_d$ by
$$
\distH(C_1,C_2) := \inf\big\{\alpha \geq 0 : \aTube(C_1, \alpha) \supset C_2
\text{ and } \aTube(C_2, \alpha) \supset C_1 \big\}
\quad\text{for $C_1,C_2 \in \cC_d$.}
$$
We equip $\cC_d$ with the conic Hausdorff metric and the associated metric topology to form a compact metric space.  It is not hard to check~\cite[Prop.~3.2.4]{Ame:11} that polarity is a local isometry on $\cC_d$:
\begin{equation} \label{eqn:polar-isom}
\text{For $\alpha < \pi/2$,} \quad
\distH(C_1,C_2) = \alpha
\qtq{implies}
\dist(C_1^\polar, C_2^\polar) = \alpha.
\end{equation}
When we write expressions like $C_i \to C$ for closed convex cones, we are always referring to convergence in the conic Hausdorff metric.  The property~\eqref{eqn:polar-isom} ensures that $C_i \to C$ if and only if $C_i^\polar \to C^\polar$.

A basic principle in the analysis of metric spaces is to identify a dense subset that consists of points with additional regularity.  We can make arguments that exploit this regularity and apply a limiting procedure to extend the claim to the rest of the space.
To that end, let us demonstrate that the polyhedral cones form a dense subset of $\cC_d$.
The approach mirrors~\cite[Thm.~1.8.13]{Sch:93}.

\begin{fact}[Polyhedral cones are dense] \label{fact:poly-dense}
Let $C \in \cC_d$ be a closed convex cone.  For each $\eps > 0$, there is a polyhedral cone $C_{\eps} \in \cC_d$ that satisfies $\distH(C, C_{\eps}) < \eps$.
\end{fact}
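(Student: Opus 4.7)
The plan is to mimic the classical argument for approximating convex bodies in the Hausdorff metric by inscribing polytopes, adapted to the conic setting by working on the sphere. The key observation is that the conic Hausdorff metric is scale-invariant, so it suffices to control the angular distance between $C \cap \sphere{d-1}$ and a finite polyhedral approximant restricted to the sphere. Concretely, I would construct $C_\eps$ as the conic hull of a finite angular $\eps$-net inside $C$.

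First, dispose of the trivial case: if $C = \{\vct{0}\}$, take $C_\eps := \{\vct{0}\}$, for which $\distH(C, C_\eps) = 0$. Otherwise, the set $C \cap \sphere{d-1}$ is nonempty and compact, hence totally bounded in the angular metric $\dist_s$. Choose a finite set $\{\vct{x}_1, \dots, \vct{x}_N\} \subset C \cap \sphere{d-1}$ that is an $(\eps/2)$-net: every $\vct{x} \in C \cap \sphere{d-1}$ satisfies $\dist_s(\vct{x}, \vct{x}_i) < \eps/2$ for some $i$. Define
\begin{equation*}
C_\eps := \cone\{\vct{x}_1, \dots, \vct{x}_N\} = \Bigl\{ \sumnl_{i=1}^N t_i \vct{x}_i : t_i \geq 0 \Bigr\}.
\end{equation*}
This is a finitely generated convex cone, and by the Weyl--Minkowski theorem every finitely generated cone is polyhedral, so $C_\eps \in \cC_d$ and is polyhedral.

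Next, verify the Hausdorff estimate. Since each $\vct{x}_i \in C$ and $C$ is a convex cone, $C_\eps \subset C$, so trivially $C_\eps \subset \aTube(C, \eps)$. For the reverse inclusion, take any $\vct{x} \in C \setminus \{\vct{0}\}$, normalize to $\vct{x}/\enormsm{\vct{x}} \in C \cap \sphere{d-1}$, and select $\vct{x}_i$ from the net with $\dist_s(\vct{x}/\enormsm{\vct{x}}, \vct{x}_i) < \eps/2$. Scale invariance of the angular distance then gives $\dist_s(\vct{x}, \vct{x}_i) < \eps/2$, so $\vct{x} \in \aTube(C_\eps, \eps/2) \subset \aTube(C_\eps, \eps)$; the origin is already in $C_\eps$. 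Therefore $\distH(C, C_\eps) \leq \eps/2 < \eps$.

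I expect the main subtleties to be bookkeeping rather than mathematical depth. The one place that warrants care is the interaction of the angular metric with the zero vector and with cones that are not pointed (for instance, when $C$ contains a subspace, the net must cover a lower-dimensional sphere, but this is automatic since we work inside $C \cap \sphere{d-1}$ as a subset of $\sphere{d-1}$). The appeal to Weyl--Minkowski to conclude that a finitely generated cone is polyhedral is standard and can be cited directly from~\cite{Roc:70}, so it does not pose a real obstacle.
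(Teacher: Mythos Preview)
Your argument is correct and essentially identical to the paper's own proof sketch: both take a finite angular $\eps$-net of $C\cap\sphere{d-1}$, form its conic hull, and check the Hausdorff estimate. You supply a bit more detail (the trivial case, the explicit appeal to Weyl--Minkowski, and the two inclusions for $\distH$), but the strategy is the same.
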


\begin{proof}[Proof sketch]
We may assume $C \neq \{\vct{0}\}$.  Let $\coll{X}$ be a finite $\eps$-cover of the set $C \cap \sphere{d-1}$ with respect to the angular distance.  That is,
$$
\coll{X} = \big\{ \vct{x}_i : i = 1, \dots, N_{\eps} \big\} \subset C \cap \sphere{d-1}
\qtq{and}
\min\nolimits_i \dist_s(\vct{x}, \vct{x}_i) < \eps
\qtq{for all $\vct{x} \in C \cap \sphere{d-1}$.}
$$
Consider the convex cone $C_\eps$ generated by $\coll{X}$:
$$
C_{\eps} := \cone(\coll{X}) :=
\left\{ \sum\nolimits_{i=1}^{N_{\eps}} \tau_i \vct{x}_i : \tau_i \geq 0 \right\}.
$$
The cone $C_{\eps}$ is polyhedral, and it satisfies $\distH(C, C_{\eps}) < \eps$.
\end{proof}

In order to perform limiting arguments, it helps to work with continuous functions.  The next result ensures that projection onto a cone is continuous with respect to the conic Hausdorff metric.

\begin{fact}[Continuity of the projection] \label{fact:proj-cont}
Consider a sequence $( C_i )_{i \in\mathbb{N}}$ of closed convex cones in $\cC_d$ where $C_i \to C$ in the conic Hausdorff metric.  For each $\vct{x} \in \R^d$, the projection $\Proj_{C_i}(\vct{x}) \to \Proj_C(\vct{x})$ as $i \to \infty$.
\end{fact}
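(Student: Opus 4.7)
The plan is to prove continuity of the projection via the standard subsequential compactness argument, with the main care being the translation between the angular metric $\distH$ and ordinary Euclidean distance on bounded sets.

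First, I would set $\vct{y}_i := \Proj_{C_i}(\vct{x})$ and $\vct{y} := \Proj_C(\vct{x})$, and note that $\enorm{\vct{y}_i} \leq \enorm{\vct{x}}$ for every $i$ because $\vct{0} \in C_i$, so $\enorm{\vct{x} - \vct{y}_i} \leq \enorm{\vct{x}-\vct{0}} = \enorm{\vct{x}}$, and orthogonality of $\vct{y}_i$ and $\vct{x}-\vct{y}_i$ bounds $\enorm{\vct{y}_i}$ by $\enorm{\vct{x}}$. Thus $(\vct{y}_i)$ is bounded, and it suffices by Bolzano--Weierstrass to show every convergent subsequence has the common limit $\vct{y}$. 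Fix such a subsequence with $\vct{y}_{i_k} \to \vct{y}^\star$; I must verify $\vct{y}^\star \in C$ and that $\vct{y}^\star$ minimizes $\enorm{\vct{x}-\cdot}$ over $C$.

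For $\vct{y}^\star \in C$, set $\alpha_k := \distH(C_{i_k}, C) \to 0$. By definition of the conic Hausdorff metric, for each $k$ with $\vct{y}_{i_k} \ne \vct{0}$ there is $\vct{z}_k \in C$ with $\dist_s(\vct{y}_{i_k}, \vct{z}_k) \leq \alpha_k$; scaling $\vct{z}_k$ (legal since $C$ is a cone) so that $\enorm{\vct{z}_k} = \enorm{\vct{y}_{i_k}}$ gives
\[
\enorm{\vct{y}_{i_k} - \vct{z}_k} \leq 2\enorm{\vct{y}_{i_k}} \sin(\alpha_k/2) \leq \alpha_k \enorm{\vct{x}}.
\]
Hence $\vct{z}_k \to \vct{y}^\star$, and since $C$ is closed, $\vct{y}^\star \in C$. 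The degenerate case $\vct{y}_{i_k} = \vct{0}$ is immediate.

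For the minimization property, pick any $\vct{w} \in C$; I want $\enorm{\vct{x}-\vct{y}^\star} \leq \enorm{\vct{x}-\vct{w}}$. Using the reverse inclusion in the definition of $\distH$, for each $k$ there exists $\vct{w}_k \in C_{i_k}$ with $\dist_s(\vct{w}, \vct{w}_k) \leq \alpha_k$, and rescaling so that $\enorm{\vct{w}_k} = \enorm{\vct{w}}$ yields $\enorm{\vct{w}-\vct{w}_k} \leq \alpha_k \enorm{\vct{w}} \to 0$. Since $\vct{y}_{i_k} = \Proj_{C_{i_k}}(\vct{x})$, we have $\enorm{\vct{x}-\vct{y}_{i_k}} \leq \enorm{\vct{x}-\vct{w}_k}$; passing to the limit gives $\enorm{\vct{x}-\vct{y}^\star} \leq \enorm{\vct{x}-\vct{w}}$. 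By uniqueness of the metric projection onto the closed convex set $C$, we conclude $\vct{y}^\star = \vct{y} = \Proj_C(\vct{x})$.

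The main technical obstacle is the one already isolated above: converting an angular (spherical) closeness $\dist_s \leq \alpha_k$ into a Euclidean closeness suitable for taking limits. The key observation that unlocks the proof is that one is free to rescale along the ray --- since all the sets involved are cones --- thereby matching magnitudes and paying only a factor proportional to the common norm (which is bounded by $\enorm{\vct{x}}$ or $\enorm{\vct{w}}$) times the angle $\alpha_k$. Everything else reduces to standard closed-convex-set projection arguments.
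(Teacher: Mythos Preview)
Your argument is correct. The paper itself does not supply a proof of this fact; it simply remarks that ``the proof is a straightforward exercise in elementary analysis'' and refers to~\cite[Prop.~3.8]{McC:13} for the details, noting also the Euclidean analog~\cite[Lem.~1.8.9]{Sch:93}. Your subsequential compactness argument---bounding $\enorm{\vct{y}_i}\leq\enorm{\vct{x}}$ via the Moreau decomposition~\eqref{eqn:orth-decomp}, using the angular inclusion $C_{i_k}\subset\aTube(C,\alpha_k)$ together with rescaling along rays to show $\vct{y}^\star\in C$, and using the reverse inclusion to show $\vct{y}^\star$ attains the minimum distance---is precisely such an exercise and matches the shape of the standard proof. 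The only cosmetic gaps are trivial edge cases (e.g., $\vct{w}=\vct{0}$, where one may take $\vct{w}_k=\vct{0}\in C_{i_k}$ directly, and the case $C=\{\vct{0}\}$, which forces $C_i=\{\vct{0}\}$ eventually under the paper's conventions), neither of which affects the argument.
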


\noindent
The proof is a straightforward exercise in elementary analysis,
so we refer the reader to~\cite[Prop.~3.8]{McC:13} for details.
This result has a Euclidean analog~\cite[Lem.~1.8.9]{Sch:93}.

\section{Proof of the master Steiner formula}
\label{sec:proof-gener-steiner}

This section contains the proof of
Theorem~\ref{thm:general-steiner}.
In Section~\ref{sec:steiner-poly}, we establish a restricted version
of the master Steiner formula for polyhedral cones.
In Section~\ref{sec:continuity-intvols}, we apply this
basic result to prove that the intrinsic volumes of a closed
convex cone are well defined, and we verify that the
intrinsic volumes are continuous with respect to the conic Hausdorff metric.
Afterward, we use an approximation argument to extend the
master Steiner formula to closed convex cones in Section~\ref{sec:extend-cones},
and we remove the restrictions on the function $f$ in Section~\ref{sec:extend-functions}.

\subsection{Polyhedral cones and bounded continuous functions}
\label{sec:steiner-poly}

We begin with a specialized version of the master Steiner formula
that restricts the cone $C$ to be polyhedral and the function $f$
to be bounded and continuous.  This argument
contains all the essential geometric ideas.

\begin{lemma}[Master Steiner formula for polyhedral cones]
\label{lem:general-steiner-poly}
Let $f : \R_+^2 \to \R$ be a bounded continuous function, and let $C \in \cC_d$ be a polyhedral cone.
Then the geometric functional $\phi_f$ defined in~\eqref{eq:phi-f}
admits the expression
\begin{equation} \label{eq:gen-steiner-pf}
	\phi_f(C) = \sum_{k=0}^d \phi_f(L_k) \cdot v_k(C)
\end{equation}
where $L_k$ is a $k$-dimensional subspace of $\R^d$ and
the conic intrinsic volumes $v_k$ are introduced in
Definition~\ref{def:intvols}.
\end{lemma}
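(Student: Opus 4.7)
The plan is to reduce the integral defining $\phi_f(C)$ to a sum over the faces of $C$ using the tiling from Fact~\ref{fact:tiling}, and then to extract the intrinsic volumes via the angle formula in Fact~\ref{fact:polytope-angles}. Since $C$ is polyhedral, the tiling identity gives a disjoint decomposition
\[
\R^d = \bigsqcup_{F} \bigl( \relint(F) + N_F \bigr),
\]
so the expectation in $\phi_f(C)$ splits into a finite sum of conditional expectations, one per face. Boundedness of $f$ guarantees that every term is finite and that the interchange of sum and integral is legitimate.

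On each tile, I would rewrite the integrand in a convenient form. If $\vct{g} \in \relint(F) + N_F$, write $\vct{g} = \vct{y} + \vct{z}$ with $\vct{y} \in \relint(F) \subset \lin(F)$ and $\vct{z} \in N_F \subset \lin(F)^\polar = \lin(F)^\perp$. Since $\vct{y} \perp \vct{z}$, $\vct{y} \in C$, and $\vct{z} \in C^\polar$, Moreau's decomposition~\eqref{eqn:orth-decomp} forces $\Proj_C(\vct{g}) = \vct{y}$ and $\Proj_{C^\polar}(\vct{g}) = \vct{z}$. In other words, on this tile $\Proj_C(\vct{g}) = \Proj_{\lin(F)}(\vct{g})$ and $\Proj_{C^\polar}(\vct{g}) = \Proj_{\lin(F)^\perp}(\vct{g})$, which are independent standard Gaussian vectors on the orthogonal subspaces $\lin(F)$ and $\lin(F)^\perp$.

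Next I would exploit rotational invariance of the Gaussian measure on each subspace. For $F \in \coll{F}_k(C)$, write the Gaussian on $\lin(F)$ in polar form as $R_1 \vct{\theta}_1$ with $R_1^2 \sim \chisq_k$ independent of the uniform direction $\vct{\theta}_1 \in \sphere{k-1} \cap \lin(F)$; do the same on $\lin(F)^\perp$ with $R_2^2 \sim \chisq_{d-k}$ and $\vct{\theta}_2$ uniform on the sphere in $\lin(F)^\perp$. All four variates are mutually independent. Because $\relint(F)$ and $N_F$ are cones, the events $\{\vct{y} \in \relint(F)\}$ and $\{\vct{z} \in N_F\}$ depend only on $\vct{\theta}_1$ and $\vct{\theta}_2$, so by independence the tile contribution factors as
\[
\Expect\bigl[ f(R_1^2, R_2^2) \bigr] \cdot \Prob\{\vct{\theta}_1 \in \relint(F)\} \cdot \Prob\{\vct{\theta}_2 \in N_F\} = \phi_f(L_k) \cdot \sang(F) \cdot \sang(N_F),
\]
using the definition~\eqref{eq:solid-angle-probab} of the solid angle and the chi-square characterization of $\phi_f$ on a $k$-dimensional subspace from Corollary~\ref{cor:general-steiner}.

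Summing over faces and collecting by dimension then yields
\[
\phi_f(C) = \sum_{k=0}^d \phi_f(L_k) \sum_{F \in \coll{F}_k(C)} \sang(F) \, \sang(N_F) = \sum_{k=0}^d \phi_f(L_k) \cdot v_k(C),
\]
where the second equality is exactly Fact~\ref{fact:polytope-angles}. The main conceptual step is step two, correctly identifying $\Proj_C$ and $\Proj_{C^\polar}$ on each tile via uniqueness in Moreau's decomposition; the rest is bookkeeping and rotational symmetry. Continuity of $f$ plays no role here and is reserved for the approximation arguments in Sections~\ref{sec:continuity-intvols} and~\ref{sec:extend-cones}.
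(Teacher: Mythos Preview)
Your proof is correct and follows essentially the same route as the paper: decompose via the tiling of Fact~\ref{fact:tiling}, separate radial and angular parts on each tile, and collect using Fact~\ref{fact:polytope-angles}. One cosmetic point: citing Corollary~\ref{cor:general-steiner} for the identity $\phi_f(L_k)=\Expect[f(X_k,X'_{d-k})]$ is formally circular since that corollary rests on the theorem you are proving; the paper avoids this by first writing the coefficients as abstract radial integrals $I_f(k,d)$ and then evaluating $I_f(j,d)=\phi_f(L_j)$ by applying the already-derived polyhedral formula to the subspace $L_j$, but the underlying chi-square fact you use is elementary and independent of the theorem, so no real harm is done.
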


\begin{proof}
Define the random variables $\vct{u} = \Proj_C(\vct{g})$
and $\vct{w} = \Proj_{C^\polar}(\vct{g})$.
The tiling~\eqref{eqn:Rd-partition} induced by a polyhedral cone
allows us to decompose the functional $\phi_f$ in terms of the
faces of the cone $C$.
\begin{equation} \label{eqn:phi-f-faces}
\phi_f(C) = \Expect\big[ f\big( \enormsm{\vct{u}}^2, 
	\enormsm{\vct{w}}^2 \big) \big]
	= \sum_{k=0}^d \ \sum_{F \in \coll{F}_k(C)}
	\Expect \big[ f\big(\enormsm{\vct{u}}^2,
	\enormsm{\vct{w}}^2 \big) \cdot
	\pInd_{\relint(F)}(\vct{u}) \big]
\end{equation}
where $\coll{F}_k(C)$ is the set of $k$-dimensional faces of $C$
and $\pInd_{A}$ is the 0--1 indicator function of a Borel
set $A$.

We need to find an alternative expression for the expectation
remaining in~\eqref{eqn:phi-f-faces}.  Fix a $k$-dimensional
face $F$ of $C$ with normal face $N_F$.  The orthogonal
decomposition~\eqref{eqn:face-decomp}
of the inverse image $\Proj_C^{-1}\big(\relint(F)\big)$
implies that we can integrate over $F$ and $N_F$ independently.
$$
\Expect\big[ f\big( \enormsm{\vct{u}}^2, \enormsm{\vct{w}}^2  \big) \cdot
	\pInd_{\relint(F)}(\vct{u}) \big]
= \frac{1}{(2\pi)^{d/2}} \int_{\relint(F)} \diff{\vct{x}} 	\int_{N_F} \diff{\vct{y}} \cdot
	f\big( \enormsm{\vct{x}}^2, \enormsm{\vct{y}}^2 \big) \cdot
	\econst^{-(\enormsm{\vct{x}}^2 + \enormsm{\vct{y}}^2)/2}.
$$
This identity relies on the Pythagorean relation~\eqref{eqn:pythag}.
The volume elements $\diff{\vct{x}}$ and $\diff{\vct{y}}$ derive from
the Lebesgue measures on $\lin(F)$ and $\lin(N_F)$.
Some care is required for the face $F = \{\vct{0}\}$,
in which case $\diff{\vct{x}}$ is the Dirac measure
at the origin; a similar issue arises when
$N_F = \{ \vct{0} \}$.

To continue, we convert each of the integrals to
polar coordinates~\cite[Thm.~2.49]{Fol:99}.  This
step gives
\begin{equation} \label{eqn:post-polar}
\Expect\big[ f\big( \enormsm{\vct{u}}^2, \enormsm{\vct{w}}^2  \big) \cdot
	\pInd_{\relint(F)}(\vct{u}) \big]
	= \left( \int_{\relint(F) \cap \sphere{k-1}} \diff{\bar{\sigma}_{k-1}} \right)
	\left( \int_{N_F \cap \sphere{d-k-1}} \diff{\bar{\sigma}_{d-k-1}} \right)
	\cdot I_f(k,d)
\end{equation}
where $\bar{\sigma}_{j-1}$ denotes the uniform measure on the sphere
$\sphere{j-1}$.  The quantity $I_f(k,d)$ depends only
on the function $f$ and the two indices $k$ and $d$:
$$
I_f(k,d) := \frac{\sigma_{k-1}\big(\sphere{k-1}\big) \cdot \sigma_{d-k-1}\big(\sphere{d-k-1}\big)}{(2\pi)^{d/2}}
	\int_0^\infty \int_0^\infty f\big(s^2,t^2 \big) \cdot s^{k-1} t^{d-k-1}
	\econst^{-(s^2+t^2)/2} \idiff{s} \idiff{t}
	\quad\text{when $1 \leq k  \leq d - 1$}
$$
and
$$
I_f(0,d) := \frac{\sigma_{d-1}(\sphere{d-1})}{(2\pi)^{d/2}}
	\int_0^\infty f\big(0, t^2\big) \cdot t^{d-1} \econst^{-t^2/2} \idiff{t}
\qtq{and}
I_f(d,d) := \frac{\sigma_{d-1}(\sphere{d-1})}{(2\pi)^{d/2}}
	\int_0^\infty f\big(s^2, 0\big) \cdot s^{d-1} \econst^{-s^2/2} \idiff{s}.
$$
We do not need these formulas for $I_f$, but we have included them for reference.

In view of the identity~\eqref{eq:solid-angle-probab}
for the solid angle of a cone, the expression~\eqref{eqn:post-polar} implies that
\begin{equation} \label{eqn:face-expect}
\Expect\big[ f\big( \enormsm{\vct{u}}^2, \enormsm{\vct{w}}^2  \big) \cdot
	\pInd_{\relint(F)}(\vct{u}) \big]
	= \sang(F) \sang(N_F)
	\cdot I_f(k,d).
\end{equation}
We have employed the fact that the solid angle of a cone coincides with the
solid angle of its relative interior.
The geometry of the face $F$ only enters this expression through the
presence of the solid angles.

We are almost done now.
Combine the decomposition~\eqref{eqn:phi-f-faces} and the identity~\eqref{eqn:face-expect}
to reach
\begin{equation} \label{eqn:phi-f-almost}
\phi_f(C)
	= \sum_{k=0}^d I_f(k,d) \cdot \left( \sum\nolimits_{F \in \coll{F}_k(C)} \sang(F) \sang(N_F) \right)
	 = \sum_{k=0}^d I_f(k,d) \cdot v_k(C).
\end{equation}
The second relation follows from Fact~\ref{fact:polytope-angles}, which
expresses the intrinsic volumes in terms of the internal and external
angles of the cone $C$.
Finally, we must identify an alternative representation
for the coefficients $I_f(k,d)$.  Recall that a $j$-dimensional subspace
$L_j$ of $\R^d$ is a polyhedral cone with $v_j(L_j) = 1$ and $v_k(L_j) = 0$
for $k \neq j$.
Applying the formula~\eqref{eqn:phi-f-almost}
to the subspace $L_j$, we learn that
$$
\phi_f(L_j) = I_f(j,d)
\quad\text{for $j = 0, 1, 2, \dots, d$.}
$$
Substitute these identities into~\eqref{eqn:phi-f-almost} to complete the
proof of~\eqref{eq:gen-steiner-pf}.
\end{proof}

\subsection{Continuity of intrinsic volumes}
\label{sec:continuity-intvols}

To carry out our plan, we need to verify that the conic intrinsic
volumes of $C$ are well defined and continuous with respect to the conic Hausdorff metric.

\begin{proposition}[Intrinsic volumes of convex cones]\label{prop:intv-cont}
Consider a closed convex cone $C \in \cC_d$.
\begin{enumerate} \setlength{\itemsep}{2mm}
\item	{\textbf{Well-definition.}}  There is a sequence $(C_i)_{i \in \mathbb{N}}$ of polyhedral cones in $\cC_d$ that converges to $C$ in the conic Hausdorff metric.
For each index $k$, the limit $\lim_{i \to \infty} v_k(C_i)$ exists,
and it is independent of the sequence of polyhedral cones.  Therefore, we may define
\begin{equation} \label{eqn:intvol-def-pf}
v_k(C) := \lim_{i \to \infty} v_k(C_i)
\quad\text{for $k = 0, 1, 2, \dots, d$.}
\end{equation}

\item	{\textbf{Continuity.}}  Let $(C_i)_{i\in\mathbb{N}}$ be any sequence of cones in $\cC_d$ that converges to $C$ in the conic Hausdorff metric.  Then
$$
\lim_{i \to \infty} v_k(C_i) = v_k(C)
\quad\text{for $k = 0, 1, 2, \dots, d$.}
$$
\end{enumerate}
\end{proposition}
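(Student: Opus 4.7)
The plan is to invert the polyhedral master Steiner formula (Lemma~\ref{lem:general-steiner-poly}) to express each intrinsic volume as a continuous functional on $\cC_d$. Once we have, on polyhedral cones, an identity of the form $v_k(P)=\sum_j c_{kj}\,\phi_{f_j}(P)$ with coefficients $c_{kj}$ that do not depend on $P$, and with each $\phi_{f_j}$ continuous on $\cC_d$, we can \emph{define} $\widetilde v_k(C):=\sum_j c_{kj}\,\phi_{f_j}(C)$ for general cones and deduce both claims of the proposition at once: well-definition in~\eqref{eqn:intvol-def-pf} because the right-hand side is independent of any approximating sequence, and continuity because the right-hand side is a continuous function of $C$.

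The first step is to verify that $\phi_f:\cC_d\to\R$ is continuous whenever $f:\R_+^2\to\R$ is bounded and continuous. Given $C_i\to C$ in the conic Hausdorff metric, Fact~\ref{fact:proj-cont} yields $\Proj_{C_i}(\vct{g})\to\Proj_C(\vct{g})$ pointwise, and the polarity isometry~\eqref{eqn:polar-isom} together with the same fact gives $\Proj_{C_i^\polar}(\vct{g})\to\Proj_{C^\polar}(\vct{g})$ pointwise as well. Continuity of $f$ combined with boundedness then permits dominated convergence, yielding $\phi_f(C_i)\to\phi_f(C)$.

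The second step is the construction of the test functions. Fix pairwise distinct positive numbers $\xi_0,\dots,\xi_d$ and set $f_j(a,b):=\econst^{-\xi_j a}$; each $f_j$ is bounded and continuous. The elementary fact that $\enormsm{\Proj_{L_k}(\vct{g})}^2$ follows the chi-square distribution with $k$ degrees of freedom gives
$$
\phi_{f_j}(L_k)=\Expect\,\econst^{-\xi_j \,\enormsm{\Proj_{L_k}(\vct{g})}^2}=(1+2\xi_j)^{-k/2}=:M_{jk}.
$$
Letting $y_j:=(1+2\xi_j)^{-1/2}\in(0,1)$, the matrix $M=[y_j^k]_{j,k=0}^d$ is Vandermonde in the distinct parameters $y_j$ and therefore invertible. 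Applying Lemma~\ref{lem:general-steiner-poly} to each $f_j$ on a polyhedral cone $P\in\cC_d$ gives the linear system $\phi_{f_j}(P)=\sum_{k=0}^d M_{jk}\,v_k(P)$, which inverts to
$$
v_k(P)=\sum_{j=0}^d (M^{-1})_{kj}\,\phi_{f_j}(P)\qquad\text{for every polyhedral $P\in\cC_d$.}
$$
Defining $\widetilde v_k(C):=\sum_j (M^{-1})_{kj}\,\phi_{f_j}(C)$ for all $C\in\cC_d$, the first step ensures $\widetilde v_k$ is continuous on $\cC_d$, and the displayed identity gives $\widetilde v_k=v_k$ on polyhedral cones. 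For arbitrary $C\in\cC_d$, Fact~\ref{fact:poly-dense} supplies polyhedral $C_i\to C$, and continuity of $\widetilde v_k$ yields $v_k(C_i)=\widetilde v_k(C_i)\to\widetilde v_k(C)$, so the limit in~\eqref{eqn:intvol-def-pf} exists and is independent of the chosen sequence. The same identity delivers continuity on all of $\cC_d$ without further work.

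The main obstacle is the inversion itself: Lemma~\ref{lem:general-steiner-poly} determines $\phi_f$ from the intrinsic volumes, and we need to reverse the relationship to extract each individual $v_k$ from finitely many functionals $\phi_{f_j}$ that are simultaneously admissible (bounded and continuous, so that Step~1 applies) and produce an invertible coefficient matrix. The exponential family $f_j(a,b)=\econst^{-\xi_j a}$ handles both requirements in one stroke because the coefficient matrix assumes Vandermonde form; any other family with a similarly separating moment-generating-function structure would serve equally well.
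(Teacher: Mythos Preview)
Your proof is correct and follows the same high-level strategy as the paper: invert the polyhedral master Steiner formula via a finite family of bounded continuous test functions, then exploit continuity of $\phi_f$ under conic Hausdorff convergence. The paper carries this out by building, in Lemma~\ref{lem:chisq-fns}, a biorthogonal system $\{f_1,\dots,f_d\}$ in the span of the chi-square densities so that $\Expect[f_j(X_k)]=\delta_{jk}$; this yields $v_j(C_i)=\Expect\big[f_j(\enormsm{\Proj_{C_i}(\vct{g})}^2)\big]$ directly for $j\ge1$, and $v_0$ is then handled separately via polarity. Your choice of exponential test functions $f_j(a,b)=\econst^{-\xi_j a}$ is a clean alternative: the resulting coefficient matrix is Vandermonde and visibly invertible, the argument is entirely elementary, and all indices $k=0,\dots,d$ are treated uniformly without a separate polarity step. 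Your packaging of the continuity claim is also slightly tighter than the paper's, since you exhibit each $v_k$ globally as a finite linear combination of continuous functionals $\phi_{f_j}$, whereas the paper closes the continuity statement with an auxiliary diagonal approximation. Both routes rest on the same two ingredients---Lemma~\ref{lem:general-steiner-poly} and Fact~\ref{fact:proj-cont}---so the difference is one of implementation rather than of idea.
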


Proposition~\ref{prop:intv-cont} is not new.  For instance,
it is an immediate consequence of the corresponding fact~\cite[Thm.~6.5.2(b)]{SchWei:08}
about spherical intrinsic volumes.  Here,
we develop the result as a consequence of Lemma~\ref{lem:general-steiner-poly}
and the continuity of the projection map, Fact~\ref{fact:proj-cont}.
We believe that this argument provides an attractive alternative
to the standard methods.  Our approach rests on the following lemma.

\begin{lemma} \label{lem:chisq-fns}
Let $X_k$ denote a chi-square random variable
with $k$ degrees of freedom.  For each $d \in \mathbb{N}$,
there is a family  $\big\{f_1, f_2, f_3, \dots, f_d \big\}$
of bounded continuous functions on $\R_+$ with the property that
$$
\Expect\big[ f_j(X_k) \big] = \begin{cases} 1, & j = k \\ 0, & j \neq k. \end{cases}
$$
\end{lemma}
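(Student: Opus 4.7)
The plan is to build the functions $f_j$ as linear combinations of exponentials $g_\lambda(x) = \econst^{-\lambda x}$, exploiting the fact that the chi-square moment generating functions are linearly independent in a Vandermonde sense.

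First, I would record the standard identity
\begin{equation*}
\Expect[g_\lambda(X_k)] = (1+2\lambda)^{-k/2} \quad\text{for } \lambda \geq 0,\ k \geq 0,
\end{equation*}
where we use the convention that $X_0 \equiv 0$ so that $(1+2\lambda)^0 = 1$ handles the $k=0$ case correctly. Each $g_\lambda$ is bounded and continuous on $\R_+$, so any finite linear combination will inherit these properties.

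Next, I would pick $d+1$ distinct nonnegative parameters, for instance $\lambda_0 = 0 < \lambda_1 < \cdots < \lambda_d$, and set $y_i = (1+2\lambda_i)^{-1/2}$. Since $\lambda \mapsto (1+2\lambda)^{-1/2}$ is strictly monotone on $[0,\infty)$, the numbers $y_0, y_1, \dots, y_d$ are distinct and nonzero. The $(d+1) \times (d+1)$ matrix $M$ with entries $M_{ki} = y_i^k$ (rows indexed by $k = 0, 1, \dots, d$, columns by $i = 0, 1, \dots, d$) is Vandermonde in the $y_i$, hence invertible.

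For each $j \in \{1, 2, \dots, d\}$, I would solve the linear system $\sum_{i=0}^d c_{ji}\, y_i^k = \delta_{jk}$ for $k = 0, 1, \dots, d$ to obtain coefficients $c_{j0}, \dots, c_{jd}$, and then define
\begin{equation*}
f_j(x) := \sum_{i=0}^d c_{ji}\, \econst^{-\lambda_i x}.
\end{equation*}
By construction $f_j$ is bounded and continuous on $\R_+$, and
\begin{equation*}
\Expect[f_j(X_k)] = \sum_{i=0}^d c_{ji}\, (1+2\lambda_i)^{-k/2} = \sum_{i=0}^d c_{ji}\, y_i^k = \delta_{jk}
\end{equation*}
for every $k \in \{0, 1, \dots, d\}$, which is exactly the required orthogonality relation.

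There is no serious obstacle here; the whole argument is a finite-dimensional linear algebra maneuver once one notices that $(1+2\lambda_i)^{-k/2} = y_i^k$ produces a Vandermonde structure. The only points requiring mild care are ensuring that $k=0$ is covered (handled by including $\lambda_0 = 0$) and confirming that distinctness of the $\lambda_i$ translates to distinctness of the $y_i$ so that the Vandermonde matrix is indeed nonsingular.
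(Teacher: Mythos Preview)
Your proof is correct and takes a genuinely different route from the paper. The paper works with the functions $\rho_k(s) = \frac{1}{2^{k/2}\Gamma(k/2)}\, s^{k/2}\econst^{-s/2}$ for $k=1,\dots,d$, observes that they are linearly independent, equips their span with the inner product $\langle f,\rho\rangle = \int_0^\infty f(s)\rho(s)\,s^{-1}\,\diff{s}$ (crafted so that $\langle f,\rho_k\rangle = \Expect[f(X_k)]$), and then invokes the existence of a biorthogonal system $\{f_1,\dots,f_d\}$ in this finite-dimensional space; the case $k=0$ falls out because every $\rho_k$ vanishes at the origin. Your argument instead uses exponential test functions $\econst^{-\lambda_i x}$ and notices that their chi-square expectations $(1+2\lambda_i)^{-k/2}$ form a Vandermonde array in the nodes $y_i=(1+2\lambda_i)^{-1/2}$, so invertibility is immediate and explicit. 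Both proofs are ultimately the same linear-algebra maneuver---find a finite family of bounded continuous test functions whose expectation matrix against $X_0,\dots,X_d$ is nonsingular, then invert---but yours is more self-contained, requiring neither the cited linear-independence fact nor the biorthogonal-system abstraction. One small remark: the specific choice $\lambda_0=0$ is not what covers $k=0$; what matters is that you take $d+1$ distinct parameters so the linear system has one equation for each $k\in\{0,1,\dots,d\}$.
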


\begin{proof}
For each $k = 1, 2, 3, \dots, d$, consider the function $\rho_k : \R_+ \to \R$ defined by
$$
\rho_k(s) = \frac{1}{2^{k/2} \Gamma(k/2)} s^{k/2} \econst^{-s/2}
	\quad\text{for $s \geq 0$.}
$$
These functions are bounded and continuous, and they compose a linearly independent family~\cite[Chap.~5]{CL00:Course-Approximation}.  Introduce the $d$-dimensional linear space
$P := \lin\big\{ \rho_1, \dots, \rho_d \big\}$ equipped with the inner product
$$
\ip{ \smash{f} }{ \smash{\rho} } := \int_{0}^\infty f(s) \rho(s) \frac{\diff{s}}{s}
\quad\text{for $f, \rho \in P$.}
$$
Standard arguments~\cite[Lem.~8.6-2]{Kre89:Introductory-Functional}
show that $\big\{ \rho_1, \dots, \rho_d \big\}$ induces a biorthogonal
system $\big\{ f_1, \dots, f_d \big\} \subset P$.
By construction, the functions $f_j$ identify the number of degrees of freedom in a chi-square random variable.  Indeed, let $X_k$ follow the chi-square distribution with $k$ degrees of freedom.  Then
$$
\Expect\big[ f_j(X_k) \big]
	= \int_0^\infty f_j(s) \cdot
	\frac{1}{2^{k/2} \Gamma(k/2)} s^{k/2 - 1} \econst^{-s/2}\idiff{s}
	= \ip{ \smash{f_j} }{ \smash{\rho_k} }
	= \begin{cases} 1, & j = k \\ 0, & j \neq k. \end{cases}
$$
This is the advertised result.
\end{proof}

We can use the functions from Lemma~\ref{lem:chisq-fns} in combination with Lemma~\ref{lem:general-steiner-poly} to isolate the properties of individual intrinsic volumes.

\begin{proof}[Proof of Proposition~\ref{prop:intv-cont}]
Let $C \in \cC_d$ be a closed convex cone.
Fact~\ref{fact:poly-dense} implies that there is a sequence
$(C_i)_{i \in \mathbb{N}}$ of polyhedral cones in $\cC_d$
for which $C_i \to C$.  As a consequence, $C_i^\polar \to C^\polar$ as well.

Consider the family $\big\{ f_1, \dots, f_d \big\}$ of functions
promised by Lemma~\ref{lem:chisq-fns}.
For each index $j \geq 1$, Lemma~\ref{lem:general-steiner-poly}
shows that
$$
\Expect \big[ f_j\big( \enormsm{\Proj_{C_i}(\vct{g})}^2 \big) \big]
	= \sum_{k=0}^d \Expect\big[ f_j(X_k) \big] \cdot v_k(C_i)
	= v_j(C_i)
	\quad\text{for $i \in \mathbb{N}$.}
$$
We claim that
\begin{equation}\label{eqn:vj-lim}
v_j(C_i) = \Expect \big[ f_j\big( \enormsm{\Proj_{C_i}(\vct{g})}^2 \big) \big]
	\to \Expect\big[ f_j\big(\enormsm{\Proj_C(\vct{g})}^2 \big) \big]
	\quad\text{as $i \to \infty$.}
\end{equation}
The limit in~\eqref{eqn:vj-lim} does not depend on the choice of sequence,
so the definition~\eqref{eqn:intvol-def-pf} of the intrinsic
volumes of $C$ is valid for each index $k \geq 1$.
For the intrinsic volume $v_0$, we simply note that
$$
v_0(C_i)
	= v_d( C_i^\polar ) \to v_d( C^\polar )
	\quad\text{as $i \to \infty$}
$$
as a consequence of the definition of $v_d$.  This limit is unambiguous, so $v_0$ is also well-defined.

To justify the calculation in~\eqref{eqn:vj-lim},
we apply the dominated convergence theorem to pass
the limit through the expectation.  Fact~\ref{fact:proj-cont}
shows that the metric projection is continuous; the Euclidean norm
and the functions $f_j$ are also continuous.  Thus, we have the pointwise limit
$$
f_j\big(\enormsm{\Proj_{C_i}(\vct{x})}^2 \big) \to
	f_j\big(\enormsm{\Proj_C(\vct{x})}^2 \big)
	\quad\text{as $i \to \infty$ for $\vct{x} \in \R^d$.}
$$
The integrands are controlled by an integrable function because
$f_j$ is bounded:
$$
 \abs{ f_j\big( \enormsm{\Proj_{C_i}(\vct{g})}^2 \big) } 
	\leq \sup_{s \geq 0 } \abs{f_j(s)}
	\quad\text{for $i \in \mathbb{N}$.}
$$
Dominated convergence applies, which ensures that~\eqref{eqn:vj-lim} is correct.

From here, it is easy to verify the continuity of intrinsic volumes.
Suppose that $(C_i)_{ i \in \mathbb{N} }$ is a sequence
of closed convex cones in $\cC_d$ for which $C_i \to C$.
For each index $k \geq 0$,
we can find a sequence $(C_i')_{ i \in \mathbb{N} }$
of polyhedral cones in $\cC_d$ for which
$$
\distH(C_i', C_i) < i^{-1}
\qtq{and}
\abs{ v_k(C_i') - v_k(C_i) } < i^{-1}
\quad\text{for $i \in \mathbb{N}$.}
$$
This point follows from the density of polyhedral cones
in $\cC_d$ stated in Fact~\ref{fact:poly-dense} and the
definition~\eqref{eqn:intvol-def-pf} of the intrinsic volumes.
Since $C_i \to C$, this construction ensures that $C_i' \to C$.
By definition of the intrinsic volumes, $v_k(C_i') \to v_k(C)$.
But then we must conclude that $v_k(C_i) \to v_k(C)$.
\end{proof}

\subsection{Extension to general convex cones}
\label{sec:extend-cones}

Next, let us extend the master Steiner formula, Lemma~\ref{lem:general-steiner-poly}
from polyhedral cones to closed convex cones.  Our strategy is to approximate a convex cone with a sequence of polyhedral cones, apply Lemma~\ref{lem:general-steiner-poly} to each member of the sequence, and use continuity to take the limit.

\begin{lemma}[Extension to closed convex cones] \label{lem:steiner-cone}
Let $f : \R_+^2 \to \R$ be a bounded continuous function,
and let $C \in \cC_d$ be a closed convex cone.  Then the master
Steiner formula~\eqref{eq:gen-steiner-pf} still holds.
\end{lemma}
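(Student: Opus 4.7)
The plan is to approximate $C$ by a sequence of polyhedral cones, apply the polyhedral master Steiner formula from Lemma~\ref{lem:general-steiner-poly} termwise, and then pass to the limit on both sides.

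First, I would invoke Fact~\ref{fact:poly-dense} to obtain a sequence $(C_i)_{i \in \mathbb{N}}$ of polyhedral cones in $\cC_d$ with $C_i \to C$ in the conic Hausdorff metric. For each $i$, Lemma~\ref{lem:general-steiner-poly} gives
\begin{equation*}
\phi_f(C_i) = \sum_{k=0}^d \phi_f(L_k) \cdot v_k(C_i),
\end{equation*}
where the coefficients $\phi_f(L_k)$ depend only on the function $f$ and the dimensions, not on the particular cone. So it suffices to show that $\phi_f(C_i) \to \phi_f(C)$ and $v_k(C_i) \to v_k(C)$ for each $k$, after which letting $i \to \infty$ in the displayed identity gives the claim.

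The convergence $v_k(C_i) \to v_k(C)$ is exactly the continuity of intrinsic volumes proved in Proposition~\ref{prop:intv-cont}, so this half is already in hand. For $\phi_f(C_i) \to \phi_f(C)$, I would apply the dominated convergence theorem to
\begin{equation*}
\phi_f(C_i) = \Expect\bigl[ f\bigl( \enormsm{\Proj_{C_i}(\vct{g})}^2,\ \enormsm{\Proj_{C_i^\polar}(\vct{g})}^2 \bigr) \bigr].
\end{equation*}
The integrand converges pointwise because $C_i \to C$ implies $C_i^\polar \to C^\polar$ by the local isometry~\eqref{eqn:polar-isom}, so Fact~\ref{fact:proj-cont} gives $\Proj_{C_i}(\vct{x}) \to \Proj_C(\vct{x})$ and $\Proj_{C_i^\polar}(\vct{x}) \to \Proj_{C^\polar}(\vct{x})$ for every $\vct{x} \in \R^d$; combined with continuity of the Euclidean norm and of $f$, this yields pointwise convergence of the integrand. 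Since $f$ is bounded, the integrands are uniformly bounded by $\sup_{a,b \geq 0} \abs{f(a,b)}$, which is trivially integrable against the Gaussian measure, so dominated convergence applies.

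I do not anticipate any serious obstacle here: every ingredient has been prepared in the preceding sections. The only place one might stumble is verifying that polar convergence follows from cone convergence, but~\eqref{eqn:polar-isom} handles this. The remaining steps in the paper, namely removing the continuity and boundedness assumptions on $f$, are deferred to Section~\ref{sec:extend-functions} and are not part of this lemma.
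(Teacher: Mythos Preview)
Your proposal is correct and matches the paper's proof essentially line for line: approximate by polyhedral cones via Fact~\ref{fact:poly-dense}, apply Lemma~\ref{lem:general-steiner-poly} termwise, then pass to the limit using dominated convergence on the left (continuity of $f$, the norm, and the projector plus boundedness of $f$) and Proposition~\ref{prop:intv-cont} on the right. The only cosmetic difference is that the paper states the polar-convergence fact up front rather than citing~\eqref{eqn:polar-isom} mid-argument.
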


\begin{proof} Fact~\ref{fact:poly-dense}
ensures that polyhedral cones form a dense subset of $\cC_d$,
so there is a sequence $( C_i )_{ i \in \mathbb{N} }$ of
polyhedral cones in $\cC_d$ for which $C_i \to C$ as $i \to \infty$.
We also have the limit $C_i^\polar \to C^\polar$.
Lemma~\ref{lem:general-steiner-poly} implies that
$$
\Expect\big[ f\big( \enormsm{\Proj_{C_i}(\vct{g})}^2, \
	\enormsm{\Proj_{C_i^\polar}(\vct{g})}^2 \big) \big]
	= \sum_{k=0}^d \phi_f(L_k) \cdot v_k(C_i)
	\quad\text{for $i \in \mathbb{N}$.}
$$
Taking the limit as $i \to \infty$, we reach
$$
\Expect\big[ f\big( \enormsm{\Proj_{C}(\vct{g})}^2, \
	\enormsm{\Proj_{{C}^\polar}(\vct{g})}^2 \big) \big]
	 = \sum_{k=0}^d \phi_f(L_k) \cdot v_k(C).
$$
To justify the limit on the left-hand side, we invoke
the dominated convergence theorem.  This act is legal
because $f$ is bounded and continuous, the squared
Euclidean norm is continuous, and the metric projector
is continuous.  The limit on the right-hand side
follows from the continuity of intrinsic volumes expressed in
Proposition~\ref{prop:intv-cont}.
\end{proof}

\subsection{Extension to integrable functions}
\label{sec:extend-functions}

We are now prepared to complete the proof of the master Steiner formula
that we announced in Section~\ref{sec:master-steiner}.
All that remains is to expand the class of functions that we can consider.
The following lemma contains the outstanding claims of
Theorem~\ref{thm:general-steiner}.

\begin{lemma}[Extension to integrable functions] \label{lem:steiner-poly-extend}
Let $f : \R_+^2 \to \R$ be a Borel measurable function,
and let $C \in \cC_d$ be a closed convex cone.  Then the master
Steiner formula~\eqref{eq:gen-steiner-pf} still holds,
provided that each expectation is finite.
\end{lemma}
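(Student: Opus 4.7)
The plan is to recast Lemma~\ref{lem:steiner-cone} as an equality of two Borel probability measures on $\R_+^2$, after which the extension to arbitrary Borel measurable $f$ is automatic.

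First I would introduce two Borel probability measures on $\R_+^2$. Let $\mu_C$ denote the pushforward of the standard Gaussian measure on $\R^d$ under the map $\vct{x} \mapsto \bigl( \enormsm{\Proj_C(\vct{x})}^2, \ \enormsm{\Proj_{C^\polar}(\vct{x})}^2 \bigr)$, and set $\nu_C := \sum_{k=0}^d v_k(C) \cdot \rho_k$, where $\rho_k$ is the law on $\R_+^2$ of the independent chi-square pair $(X_k, X'_{d-k})$ with $k$ and $d-k$ degrees of freedom. Both are probability measures: $\mu_C$ because it is the image of a probability measure under a Borel map, and $\nu_C$ because the intrinsic volumes $v_k(C)$ form a probability distribution on $\{0,1,\dots,d\}$ and each $\rho_k$ is itself a probability measure.

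By Lemma~\ref{lem:steiner-cone} together with the identification $\phi_f(L_k) = \Expect[f(X_k, X'_{d-k})]$ recorded in Corollary~\ref{cor:general-steiner}, we have $\int f \, d\mu_C = \int f \, d\nu_C$ for every bounded continuous function $f : \R_+^2 \to \R$. Since $\R_+^2$ is a separable metric space, its finite Borel measures are uniquely determined by their integrals against bounded continuous functions; consequently $\mu_C = \nu_C$ as Borel measures on $\R_+^2$.

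Once the two measures coincide, the result drops out. For any Borel measurable $f : \R_+^2 \to \R$, decompose $f = f_+ - f_-$ into its positive and negative parts; each nonnegative piece integrates to the same extended real value against $\mu_C$ and against $\nu_C$. Under the standing hypothesis that every expectation appearing in~\eqref{eq:gen-steiner-pf} is finite, both integrals are genuine real numbers, and subtracting yields~\eqref{eq:gen-steiner-pf} for $f$. I anticipate no real obstacle here beyond noting that $\R_+^2$ is regular enough (being Polish) for bounded continuous functions to separate Borel measures, which is standard.
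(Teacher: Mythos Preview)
Your proposal is correct and follows essentially the same strategy as the paper: both recast Lemma~\ref{lem:steiner-cone} as an equality of two Borel probability measures on $\R_+^2$, invoke a standard separation argument to conclude the measures coincide, and then integrate an arbitrary Borel $f$ via its positive and negative parts. The only cosmetic difference is that the paper restricts to $h \in C_0(\R_+^2)$ and appeals to the Riesz representation theorem, whereas you use the full class of bounded continuous functions and the determinacy of finite Borel measures on a Polish space; both routes are standard and yield the same conclusion.
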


\begin{proof}
Let us reinterpret Lemma~\ref{lem:steiner-cone} as a statement
about measures.  The Banach space $C_0(\R_+^2)$ consists of
bounded and continuous real-valued functions on $\R_+^2$ that tend to zero
at infinity.
Consider a function $h \in C_0(\R_+^2)$, and
observe that the left-hand side of~\eqref{eq:gen-steiner-pf} can be written as
$$
\phi_h(C) = \Expect\big[ h\big( \enormsm{\Proj_C(\vct{g})}^2, \ 
	\enormsm{\Proj_{C^\polar}(\vct{g})}^2 \big) \big]
	= \int h(s,t) \idiff{\mu}(s,t)
$$
where the measure $\mu$ is defined for each Borel set $A \subset \R_+^2$ by the
rule
$$
\mu(A) := \Prob\big\{ \big(\enormsm{\Proj_C(\vct{g})}^2, \
	\enormsm{\Proj_{C^\polar}(\vct{g})}^2 \big) \in A \big\}.
$$
Similarly, the right-hand side of~\eqref{eq:gen-steiner-pf} can be written
as
$$
\sum_{k=0}^d \phi_f(L_k) \cdot v_k(C)
	= \sum_{k=0}^d \left( \int h(s,t) \idiff{\mu}_k(s,t) \right) \cdot v_k(C)
$$
where the measure $\mu_k$ is defined via
$$
\mu_k(A) := \Prob\big\{ \big(\enormsm{\Proj_{L_k}(\vct{g})}^2, \	
	\enormsm{\Proj_{{L_k}^\polar}(\vct{g})}^2 \big) \in A \big\}.
$$
As a consequence, Lemma~\ref{lem:steiner-cone} demonstrates that
\begin{equation} \label{eqn:test-fns}
\int h \idiff{\mu} = \int h \idiff{\left(\sum_{k=0}^d v_k(C) \mu_k\right)}
\quad\text{for $h \in C_0(\R_+^2)$.}
\end{equation}
We claim that~\eqref{eqn:test-fns} guarantees the equality of measures
\begin{equation} \label{eqn:measure-equal}
\mu = \sum_{k=0}^d v_k(C) \cdot \mu_k.
\end{equation}
Because the measures are equal, it holds for each nonnegative Borel measurable
function $f_+ : \R_+^2 \to \R_+$ that
$$
\int f_+ \idiff{\mu} = \sum_{k=0}^d \left( \int f_+ \idiff{\mu_k} \right) \cdot v_k(C).
$$
We can replace $f_+$ with any Borel measurable function $f: \R_+^2 \to \R$,
provided that all the integrals remain finite.  Reinterpreted, this observation yields
the conclusion.

Finally, we justify the claim~\eqref{eqn:measure-equal}.
The dual of $C_0(\R_+^2)$ can be identified
as the Banach space $\mathbb{M}(\R_+^2)$ of regular Borel measures,
acting on functions by integration~\cite[Thm.~6.19]{Rud87:Real-Complex}.
Therefore, $C_0(\R_+^2)$ separates points in
$\mathbb{M}(\R_+^2)$~\cite[Sec.~3.14]{Rud91:Functional-Analysis}.
Each of the measures $\mu$ and $\mu_k$ is the push-forward of
the standard Gaussian measure $\gamma_d$ by a continuous function,
so each one is a regular Borel probability
measure~\cite[pp.~174, 185]{Bil95:Probability-Measure}.
Therefore, the collection of integrals in~\eqref{eqn:test-fns}
guarantees the equality of measures in~\eqref{eqn:measure-equal}. 
\end{proof}

\section*{Acknowledgments}

The authors thank Dennis Amelunxen and Martin Lotz for inspiring conversations and for their  thoughtful comments on this material.  This research was supported by ONR awards N00014-08-1-0883 and N00014-11-1002, AFOSR award FA9550-09-1-0643, and a Sloan Research Fellowship.

\bibliographystyle{alpha}

\end{document}